\theoremstyle{plain}
\newtheorem{theorem}{Theorem}[section]
\newtheorem{corollary}[theorem]{Corollary}\newtheorem{Corollary}[theorem]{Corollary}
\newtheorem{Proposition}[theorem]{Proposition}
\newtheorem{Lemma}[theorem]{Lemma}
\theoremstyle{definition}
\newtheorem{Definition}[theorem]{Definition}
\newtheorem{remark}[theorem]{Remark}
\newtheorem{example}[theorem]{Example}\newtheorem{Example}[theorem]{Example}
\newtheorem{properties}[theorem]{Properties}
\newcommand{\thmref}[1]{Theorem~\ref{#1}}
\newcommand{\secref}[1]{Section~\ref{#1}}
\newcommand{\lemref}[1]{Lemma~\ref{#1}}
\newcommand{\corref}[1]{Corollary~\ref{#1}}
\newcommand{\propref}[1]{Proposition~\ref{#1}}
\newcommand{\Propertref}[1]{Properties~\ref{#1}}
\DeclareMathOperator{\PSL}{PSL} \DeclareMathOperator{\SL}{SL}
 \DeclareMathOperator{\GL}{GL}
\newcommand{\BC}{\mathbb{C}}
\newcommand{\BQ}{\mathbb{Q}}
\newcommand{\BA}{\mathbb{A}}
                                              \newcommand{\LA}{\mathcal{L}}
\newcommand{\al}{\alpha}
\newcommand{\be}{\beta}
\newcommand{\vp}{\varphi}
\def\Q{{\mathbb Q}}
\def\Z{{\mathbb Z}}
\def\C{{\mathbb C}}
\def\R{{\mathbb R}}
                                                                     \def\BG{{\mathbb G}}
                             \def\ev{\mathrm{ev}}
 \def\bx{\mathbf{x}}
\def\bu{\mathbf{u}}
\def\GL{\mathrm{GL}}
\def\SL{\mathrm{SL}}
\def\PSL{\mathrm{PSL}}
                                           \def\Lie{\mathrm{Lie}}
                                                     \def\End{\mathrm{End}}
                                             \def\Aut{\mathrm{Aut}}
                    \def\sll{\mathfrak{sl}}
\begin{document}

\title  {Surjectivity of certain  word maps  on $\PSL(2,\BC)$  and $\SL(2,\BC)$}
\author{Tatiana Bandman }
\address{Bandman: Department of Mathematics, Bar-Ilan University, 5290002 Ramat Gan, ISRAEL}
\email{bandman@macs.biu.ac.il}

\author{Yuri G. Zarhin}
\address{Zarhin: Department of Mathematics, Pennsylvania State University,
University Park, PA 16802, USA}
\email{zarhin\char`\@math.psu.edu}

\subjclass[2010] {20F70,20F14,20F45,20E32,20G20,14L10, 14L35.}

\keywords {special linear group, word map, trace map, Magnus embedding.}



\begin{abstract} Let $n \ge 2$ be an integer and $F_n$ the free group on $n$
 generators, $F^{(1)}, F^{(2)}$ its first and second
derived subgroups. Let $K$ be an algebraically closed field of characteristic zero. We show that if $w\in F^{(1)}\setminus F^{(2)},$
then the corresponding word map
 $\PSL(2,K)^n\to \PSL(2,K)$ is surjective. We also describe certain words maps that are surjective on $\SL(2,\BC).$
\end{abstract}
\maketitle

\section{Introduction}

The surjectivity of word maps on groups became recently  a vivid topic: the  review on the latest activities  may be found in
\cite{Se}, \cite{Ku}, \cite {BGaK}, \cite{KBKP}.

Let  $w\in F_n$ be an element of the free group $F_n$ on $n>1$ generators
$g_1,\dots,g_n:$
$$w =\prod\limits_{i=1}^{k}g_{n_i}^{m_i}, \ 1\le{n_i}\le n.$$

For a   group  $G$  by the same letter $w$ we shall denote  the
corresponding word map $w:G^n\to G$   defined as a non-commutative product by the formula
\begin{equation}\label{wordn}
w(x_1,\dots, x_n)=\prod \limits_{i=1}^{k}x_{n_i}^{m_i}.\end{equation}

We call $w (x_1,\dots, x_n)$  both   {\it a word in $n$ letters } if considered as an element of a free group
and   {\it a word map in $n$ letters } if considered as the corresponding   map  $G^n\to G.$

We assume that  it is reduced, i.e. ${n_i}\ne{n_{i+1}}$  for every $1\le i\le k-1$ and $m_i\ne 0$ for $1\le i\le k. $

Let $K$  be a field and
$H$  a connected semisimple algebraic linear group. If $w$ is not the identity then  by  Theorem of A Borel  (\cite{Bo})
the regular map of (affine) $K$-algebraic varieties
$$w: H^n \to H, \ (h_1,\dots,h_n) \mapsto w(x_1,\dots,x_n)$$
is {\sl dominant}, i.e., its image is a Zariski dense  subset of $H$.
Let us consider the group  $G=H(K)$ and
the image $$w_G:=w(G^n)=\{z\in G \mid z=w(x_1,\dots,x_n) \text{ for some } \ (x_1,\dots,x_n)\in G^n\}.$$
We  say that a word (a word map) $w$ is {\sl surjective} on
 $G$ if $w_G=G.$

In  \cite{KKMP}, \cite{Ku} formulated is the following Question.

{\bf Problem 7 of \cite{KKMP},  Question 2.1 (i) of \cite{Ku}.}
Assume that $w $ is not a  power of another reduced word and $G=H(K)$  a
 connected semisimple algebraic linear group.

Is $w$ surjective when $K = \BC$ is a field of complex numbers and $H$ is of adjoint type?

According  to\cite  {Ku}, Question 2.1(i) is still open, even in the simplest case $G =\ PSL(2,\BC),$
even for words in two letters.

We consider  word  maps   on  groups $G=\SL(2,K)$ and $\tilde G=\PSL(2,K).$
Put
$$F:=F_n, \  F^{(1)}=[F,F], \ F^{(2)}=[F^{(1)},F^{(1)}] .$$
As usual, $\Z,\Q,\R,\C$ stand for the ring of integers and fields of rational, real and complex numbers respectively.  $\BA(K)^m_{x_1,\dots,x_m}$
or, simply, $\BA^m,$  stands for the $n-$dimensional  affine space over a field $K$ with coordinates
$x_1,\dots,x_m.$ If  $K=\BC$, we use $\BC^m_{x_1,\dots,x_m}.$

 Let $w\in F$. For a corresponding word map on $G=\SL(2,K)$
we check the following properties of the image  $w_G$.
\begin{properties}\label{properties}\hskip10 cm
 \begin{description}
\item[a] $w_G$ contains all semisimple elements $x $  with $tr(x)\ne 2;$
\item[b] $w_G$ contains all unipotent elements $x$ with $tr(x)= 2;$
\item[c] $w_G$ contains all  minus unipotent elements $x $ with $tr(x)= -2$ and  $ x\ne -id;$
\item[d] $w_G$ contains  $ -id.$
\end{description}
\end{properties}
 The word map $w$ is surjective on $G=\SL(2,K)$
if all  \Propertref{properties} are met.
For surjectivity on $\tilde G=\PSL(2,K)$ it is sufficient that only  \Propertref{properties}  {\bf a} and
{\bf b} are valid.

\begin{Definition} (cf.\cite{BGG}) We say that the word map $w$ is almost surjective
on $G=\SL(2,K)$ if it has \Propertref{properties} {\bf a},{\bf b}, and {\bf c}, i.e
$w_G\supset \SL(2,K)\setminus -\{id\}.$\end{Definition}

 The goal of the paper is to  describe certain  words $w\in F$ such that the corresponding   word maps
are surjective or almost surjective  on $G$ and/or
 $\tilde G.$

Assume that the field $K$ is algebraically closed. If $w(x_1,\dots,x_d)=x_i^n$ then $w$ is surjective on $G$ if and only if $n$ is odd  (see (\cite{Ch1}, \cite{Ch2}). Indeed,
 the element
$$x=\begin{pmatrix}-1&1\\0&-1\end {pmatrix}$$
is not a square in $\SL(2,K).$  Since only the elements with $tr(x)=-2$  may be outside $w_G$
(\cite{Ch1}, \cite{Ch2}),
the  induced by $w$  word map $\tilde w$ is surjective on $\tilde G.$

Consider  a word map \eqref{wordn}.  For an index $j\le n$ let
$S_j=\sum\limits_{n_i=j}m_i.$

If,  say,  $S_1\ne 0, $ then $w(x_1,id,\dots,id)=x_1^{S_1}, $ hence
word $w$ is surjective on $\PSL (2,K). $  If $S_j=0$ for all $1\le j\le d,$
then   $w\in F^{(1)}=[F,F].$
In \secref{wrd} we prove  (see \corref{joint})
the following
\begin{theorem}\label{1.3} The word map defined by a  word $w\in  F^{(1)}\setminus F^{(2)}$ is surjective on $\PSL(2,K)$  if $K$ is an algebraically closed field with $char(K)=0.$\end{theorem}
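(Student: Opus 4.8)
The plan is to verify \Propertref{properties} \textbf{a} and \textbf{b}, which by the discussion above suffice for surjectivity on $\PSL(2,K)$, and to extract both from a single piece of data: the image of $w$ under the Magnus embedding. Write $\bar F=F/F^{(1)}\cong\BZ^n$ with generators $t_1,\dots,t_n$, and let $P_i=P_i(t_1,\dots,t_n)\in\BZ[t_1^{\pm1},\dots,t_n^{\pm1}]$ be the abelianized Fox derivatives $\partial w/\partial x_i$. The Magnus embedding sends $w\in F^{(1)}$ to the matrix $\bigl(\begin{smallmatrix} 1 & \sum_i P_i e_i\\ 0 & 1\end{smallmatrix}\bigr)$ over the free $\BZ[\bar F]$-module on $e_1,\dots,e_n$, and the condition $w\in F^{(1)}\setminus F^{(2)}$ is equivalent to the assertion that the vector $(P_1,\dots,P_n)$ is nonzero; fix $i_0$ with $P_{i_0}\neq 0$. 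Since $\fchar K=0$, the nonzero integral Laurent polynomial $P_{i_0}$ has nonzero reduction in $K[t_j^{\pm1}]$, so $P_{i_0}(s_1^2,\dots,s_n^2)\neq0$ and $P_{i_0}(s_1^{-2},\dots,s_n^{-2})\neq0$ for $(s_1,\dots,s_n)$ in a dense open subset of $(K^\times)^n$; fix such an $s$.

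For \Propertref{properties} \textbf{b} I would substitute the Borel elements $x_i=\mathrm{diag}(s_i,s_i^{-1})\bigl(\begin{smallmatrix}1&\tau_i\\0&1\end{smallmatrix}\bigr)$. These lie in the Borel subgroup $B$, whose derived group is the unipotent radical $U$; since $w\in[F,F]$, any homomorphism $F\to B$ carries $w$ into $[B,B]\subseteq U$, so $w(x_1,\dots,x_n)$ is automatically upper unipotent of trace $2$. Tracking the upper right entry through the products (conjugation by $\mathrm{diag}(s,s^{-1})$ scales it by $s^2$, and unipotent factors add their entries) one recovers exactly the specialization of the Magnus coordinate,
\[
w(x_1,\dots,x_n)=\begin{pmatrix} 1 & \sum_{i=1}^n P_i(s_1^2,\dots,s_n^2)\,\tau_i\\ 0 & 1\end{pmatrix}.
\]
Choosing $\tau_{i_0}\neq0$ and $\tau_i=0$ for $i\neq i_0$ makes the entry $P_{i_0}(s^2)\,\tau_{i_0}\neq0$, so $w(x_1,\dots,x_n)$ is a nontrivial unipotent; conjugation equivariance of the word map, $w(cx_1c^{-1},\dots,cx_nc^{-1})=c\,w(x_1,\dots,x_n)\,c^{-1}$, then yields all unipotents of trace $2$, establishing \Propertref{properties} \textbf{b}.

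For \Propertref{properties} \textbf{a} I would linearize at the diagonal point $(D_1,\dots,D_n)$, $D_i=\mathrm{diag}(s_i,s_i^{-1})$, where $w=\mathrm{id}$ because $w\in F^{(1)}$. Consider the polynomial curve in $\SL(2,K)^n$ given by $x_{i_0}(\ep)=D_{i_0}\bigl(\begin{smallmatrix}1&\ep\\0&1\end{smallmatrix}\bigr)\bigl(\begin{smallmatrix}1&0\\ \ep&1\end{smallmatrix}\bigr)$ and $x_i(\ep)=D_i$ for $i\neq i_0$, and set $f(\ep)=\tr\bigl(w(x_1(\ep),\dots,x_n(\ep))\bigr)$. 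The same Fox calculus shows that the first order term of $w$ along this curve is $A_1=\al X+\be Y=\bigl(\begin{smallmatrix}0&\al\\\be&0\end{smallmatrix}\bigr)$ with $\al=P_{i_0}(s^2)$ and $\be=P_{i_0}(s^{-2})$; the diagonal ($H$) contribution drops out because the augmentation $P_i(1,\dots,1)$ equals the exponent sum $S_i=0$. Since $\det w\equiv1$ forces the $\ep^2$ coefficient of $\tr w$ to equal $-\det A_1=\al\be$, I obtain $f(\ep)=2+\al\be\,\ep^2+O(\ep^3)$ with $\al\be\neq0$, so $f$ is a nonconstant polynomial map $\BA^1\to\BA^1$, hence surjective over the algebraically closed field $K$. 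Thus $w_G$ meets every trace value in $K$; for $\tau\neq\pm2$ the corresponding element is regular semisimple, and conjugation equivariance fills out its whole conjugacy class, giving \Propertref{properties} \textbf{a}.

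The main obstacle is the linearization bookkeeping: one must verify that the first order term of the word map at a diagonal point is genuinely governed by the Fox derivatives with the correct torus weights ($t_j\mapsto s_j^{2}$ on the $X$ weight space and $t_j\mapsto s_j^{-2}$ on the $Y$ weight space), which is precisely the differential form of the Magnus embedding. Once this identity is in place, the role of $\fchar K=0$ is only to keep the integral coordinate $P_{i_0}$ from vanishing upon reduction, and the genericity of $s$ together with the fundamental theorem of algebra do the rest. Having \Propertref{properties} \textbf{a} and \textbf{b}, surjectivity of $w$ on $\PSL(2,K)$ follows, since every class of $\PSL(2,K)$ lifts either to a regular semisimple element (trace $\neq\pm2$) or to a $\pm$unipotent, all of which lie in $w_G$.
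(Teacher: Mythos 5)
Your proof is correct, and it diverges from the paper's halfway through. For the unipotent part (\Propertref{properties} \textbf{b}) you and the paper do essentially the same thing: reduce to the Magnus embedding fact that $w\in F^{(1)}\setminus F^{(2)}$ has a nonzero Magnus coordinate, specialize at a point where it does not vanish to produce one nontrivial unipotent value, then conjugate. The paper packages this as \thmref{main} together with \corref{value} and \corref{uni} (a refined embedding in which the module parameters are set to $1$ and the kernel is shown to remain $F^{(2)}$), whereas you keep the $\tau_i$ free, so you only need the standard embedding (\thmref{W}) plus genericity of $s$; note your coordinates differ from the exact abelianized Fox derivatives by unit monomial factors (e.g.\ $s_{j}^{\pm 2}$, depending on conventions), which is harmless since units do not affect nonvanishing. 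For the semisimple part (\Propertref{properties} \textbf{a}) your route is genuinely different: the paper proves \lemref{nonomitting} (a nonconstant regular function on $G^n$ omits no value of $K$) and applies it to $\tr\circ w$ in \propref{semisimple}, with nonconstancy ultimately resting on Borel's dominance theorem; you instead restrict the trace to an explicit curve through a diagonal tuple, identify the linear term $A_1=\al X+\be Y$ by the same Fox calculus (valid because $w$ evaluates to $id$ there and all the adjoint prefixes are diagonal, so the coefficients are the abelianized Fox derivatives acting with torus weights $s^{\pm2}$, again up to units), and use $\det\equiv 1$ to force the $\ep^2$-coefficient of the trace to equal $-\det A_1=\al\be\neq 0$, so the restricted trace is a nonconstant polynomial $\BA^1\to\BA^1$, hence surjective over the algebraically closed field $K$. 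What each buys: the paper's lemma is soft, requires no differential bookkeeping, and applies to every word $w\neq id$, not only those in $F^{(1)}\setminus F^{(2)}$; your argument is more self-contained (no Borel dominance, no omitted-value lemma) and effective, exhibiting concretely how both weight spaces of a single nonzero Fox derivative produce all trace values. Two cosmetic points: the vanishing of the $H$-component of $A_1$ is simply because your perturbation $X+Y$ has no $H$-part and the torus action preserves the weight decomposition (the augmentation-zero remark is not the operative reason); and in your final sentence, minus-unipotents are not themselves shown to lie in $w_G$ — rather their negatives are, which have the same image in $\PSL(2,K)$, exactly as in the paper's reduction to \Propertref{properties} \textbf{a} and \textbf{b}.
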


The proof makes use of  a variation on the Magnus Embedding Theorem, which is stated in  \secref{magnus}
and proven in  \secref{prf}.

In  \secref{section2}, \secref{section3}, and  \secref{section4} we  consider words in two variables, i.e. $n=2.$
In this case we give explicit formulas  for $w(x,y),$  where $x,y\in\SL(2,\BC)$  are upper triangular matrices. Using   explicit formulas
 in  \secref{section3} and  \secref{section4} we
provide   criteria for surjectivity and almost surjectivity of a word map on $G=\SL(2,\BC).$
In \secref{section3} these criteria  are  formulated in terms of properties of exponents  ${a_i}, {b_i}, \ i=1\dots,k ,$ of a word
\begin{equation}\label{word}w(x,y)=\prod \limits_{i=1}^{k}x^{a_i}y^{b_i},\end {equation}
where $a_i\ne 0 $ and $b_i\ne 0, $ {for all }$ i=1,...,k.$
 A sample of such criteria is
\begin{corollary}\label{1.4}
If all $b_i$ are positive, then the word map  $w$ is either surjective
or the square of another word $v\ne id$.\end{corollary}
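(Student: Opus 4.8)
The plan is to reduce surjectivity on $\SL(2,\BC)$ to the single Property~\textbf{c} and then to show that this property fails precisely when $w$ is a square. Write $A_j=\sum_{i\le j}a_i$ and $B_j=\sum_{i\le j}b_i$, and set $A=A_k$, $B=B_k$; since every $b_i>0$ we have $B>0$. First I would dispose of Properties \textbf{a}, \textbf{b}, \textbf{d}. Taking $x=\mathrm{id}$ and $y=\mathrm{diag}(\mu,\mu^{-1})$ gives $w(\mathrm{id},y)=y^{B}=\mathrm{diag}(\mu^{B},\mu^{-B})$; as $\mu$ runs through $\BC^{*}$ and $B\neq 0$, the value $\mu^{B}$ runs through all of $\BC^{*}$, so $w_G$ meets every semisimple conjugacy class (every trace, in particular $-\mathrm{id}$), giving \textbf{a} and \textbf{d}. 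Taking instead $x=\mathrm{id}$ and $y=\left(\begin{smallmatrix}1&1\\0&1\end{smallmatrix}\right)$ gives $w(\mathrm{id},y)=y^{B}=\left(\begin{smallmatrix}1&B\\0&1\end{smallmatrix}\right)$, a non-trivial unipotent; since $w_G$ is stable under simultaneous conjugation and the non-trivial unipotents form one class, \textbf{b} follows. By the surjectivity criterion stated after \Propertref{properties}, $w$ is therefore surjective on $\SL(2,\BC)$ if and only if Property~\textbf{c} holds, i.e. if and only if $w_G$ contains a minus-unipotent element.

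One implication is immediate. If $w=v^{2}$ for some $v\neq \mathrm{id}$, then $w(x,y)=v(x,y)^{2}$ is a square for all $x,y$, so $w_G$ lies in the set of squares of $\SL(2,\BC)$; as recalled in the Introduction, a minus-unipotent such as $\left(\begin{smallmatrix}-1&1\\0&-1\end{smallmatrix}\right)$ is not a square, so Property~\textbf{c} fails and $w$ is not surjective. It remains to prove the converse: if $w$ is not a square then Property~\textbf{c} holds.

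For this I would use the explicit triangular formulas of \secref{section2}. With $x=\mathrm{diag}(\lambda,\lambda^{-1})$ and $y=\left(\begin{smallmatrix}\mu&1\\0&\mu^{-1}\end{smallmatrix}\right)$ the matrix $w(x,y)$ is upper triangular, with diagonal $(\lambda^{A}\mu^{B},\lambda^{-A}\mu^{-B})$ and upper-right entry
\[
W(\lambda,\mu)=\sum_{i=1}^{k}\lambda^{\,2A_i-A}\,\mu^{\,2B_i-b_i-B}\,[b_i]_\mu,\qquad [b]_\mu=\frac{\mu^{b}-\mu^{-b}}{\mu-\mu^{-1}}.
\]
A minus-unipotent lies in $w_G$ as soon as one can choose $(\lambda,\mu)$ with $\lambda^{A}\mu^{B}=-1$ and $W(\lambda,\mu)\neq 0$, for then $w(x,y)=\left(\begin{smallmatrix}-1&W\\0&-1\end{smallmatrix}\right)$ has trace $-2$ and is not scalar. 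Hence Property~\textbf{c} fails if and only if $W$ vanishes identically on the curve $\{\lambda^{A}\mu^{B}=-1\}$, equivalently $(\lambda^{A}\mu^{B}+1)\mid W$ in the Laurent ring $\BC[\lambda^{\pm1},\mu^{\pm1}]$. So the corollary reduces to the purely algebraic assertion that this divisibility forces the exponent sequence to be \emph{doubled}, i.e. $k=2m$ with $(a_{i},b_{i})=(a_{m+i},b_{m+i})$ for $1\le i\le m$ --- which is exactly the statement that $w=v^{2}$ with $v=\prod_{i=1}^{m}x^{a_i}y^{b_i}$.

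The main obstacle is this last combinatorial step, and here positivity of the $b_i$ is decisive. Because every $b_i>0$, the $\mu$-support of the $i$-th summand of $W$ is the interval $[\,2B_{i-1}-B+1,\;2B_i-B-1\,]$, and these intervals are pairwise disjoint and strictly increasing in $i$. Writing $2A_i-A=An_i+s_i$ with $0\le s_i<A$ (the case $A=0$ being handled directly by reducing modulo $\mu^{B}+1$) and reducing $W$ modulo $\lambda^{A}\mu^{B}+1$ via $\lambda^{A}\equiv-\mu^{-B}$, I would bring $W$ to the normal form $\sum_{s=0}^{A-1}\lambda^{s}R_s(\mu)$ in the free $\BC[\mu^{\pm1}]$-module with basis $1,\lambda,\dots,\lambda^{A-1}$; the divisibility is then equivalent to $R_s\equiv 0$ for every residue $s$. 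Since the unshifted $\mu$-supports are disjoint, any cancellation inside an $R_s$ must pair summands whose supports coincide only after the shifts by multiples of $B$ coming from the relation, and tracking these forced coincidences produces a pairing of the indices that matches the exponents and identifies the first half of the sequence with the second, i.e. the doubling. Showing that the bookkeeping of supports admits no possibility other than this exact pairing is the delicate point; once it is settled, a non-square $w$ satisfies $W\not\equiv 0$ on the curve, Property~\textbf{c} holds, and $w$ is surjective, which completes the dichotomy.
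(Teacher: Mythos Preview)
Your reduction of surjectivity to Property~\textbf{c} is correct and is exactly the paper's strategy (\lemref{noton}). The gap is precisely where you say it is: the ``delicate point'' is not proved, and your choice of triangular family makes it genuinely harder than necessary.

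By taking $x$ diagonal you are working with $W=\tilde\Psi_w$ alone (notation of \lemref{Ane0}). But $\tilde\Psi_w$ interacts poorly with the hypothesis $b_i>0$. The natural one--variable specialization $\lambda=1$ is vacuous: from \eqref{t2} one checks that $\tilde\Psi_w(1,\mu)\big|_C$ telescopes to a multiple of $\mu^{2B}-1$ and hence vanishes on $\{\mu^B=-1\}$ for \emph{every} word. The other specialization $\mu=1$ yields $\sum_i b_i\lambda^{2A_i}$, where the $A_i$ are \emph{not} monotone (the $a_i$ have arbitrary signs), so the clean pairing argument is unavailable. That is why you are pushed into the two--variable reduction modulo $\lambda^A\mu^B+1$ that you only sketch.

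The paper instead allows $c\ne 0$, so that non-surjectivity forces $\tilde\Phi_w\equiv 0$ on $C$ as well, and then sets $\lambda=1$. By \eqref{t11},
\[
\tilde\Phi_w(1,\mu)\big|_C=-\sum_{i=1}^k a_i\,\mu^{2B_i},\qquad B_i=\sum_{j<i}b_j,
\]
and here $b_i>0$ does all the work: the $B_i$ are strictly increasing with $0\le 2B_i<2B$, so after reduction modulo $\mu^B+1$ (\corref{B}) each $a_i$ must cancel against a unique $a_j$ with $a_j=a_i$ and $2B_j=2B_i+B$. Monotonicity of the $B_i$ then turns this pairing into $k=2s$, $a_{i+s}=a_i$, $b_{i+s}=b_i$, i.e.\ $w=v^2$; this is the short explicit computation in \corref{positive}. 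The fix for your argument is simply to take $y$ diagonal rather than $x$ (equivalently, use $\tilde\Phi_w$): then the step you left open becomes two lines.
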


In  \secref{section4} we connect the almost surjectivity of a word map with a property  of the corresponding trace map.
The last sections  contain explicit examples.

{\bf Acknowledgments}

We thank
 Boris Kunyavskii for inspiring questions
and  useful comments, and  Eugene Plotkin,   Vladimir L. Popov,  and Alexander Premet for help with references.

We are grateful to a referee  for suggesting to use the Magnus  Embedding Theorem.
T. Bandman is grateful to both referees  for pointing out
 the inaccuracies of the first version of the paper.

T. Bandman was
partially supported by the Ministry
of Absorption (Israel), the Israeli Science Foundation (Israeli
Academy of Sciences, Center of Excellence Program), and the Minerva
Foundation (Emmy Noether Research Institute of Mathematics).
 This work was partially done while
the author was visiting Max Planck Institute of Mathematics at Bonn.
The hospitality of this Institution is greatly appreciated.

This work  of Yu. G. Zarhin  was partially supported by a grant from the Simons Foundation
(\#246625 to Yuri Zarkhin).
This paper was written in May--June 2015 when Yu. Zarhin  was a visitor at  Department of Mathematics of
the Weizmann Institute of Science (Rehovot, Israel), whose hospitality is gratefully acknowledged.

\section{Semisimple elements} \label{semi-simple}

 Let $K$ be an algebraically closed field with $char(K)=0,$  and  \  $G=\SL(2,K).$
 Consider a word map $w:G^n\to G:$
$$w(x_1,\dots, x_n)=\prod \limits_{i=1}^{k}x_{n_i}^{m_i}.$$
We consider
$G$ as  an affine set
$$G=\{ad-bc=1\}\subset \BA^4_{a,b,c,d}.$$

The following Lemma is , may be , known, but the authors do not have a proper reference.

\begin{Lemma} \label{nonomitting}
A regular non-constant function on $G^n$ omits no values in $K.$\end{Lemma}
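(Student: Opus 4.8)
The plan is to reformulate ``omitting a value'' as a statement about units of the coordinate ring, and then to prove that the only units are the nonzero constants. Write $A=\Oc(G^n)$ for the coordinate ring of the affine variety $G^n$; since $G=\SL(2,K)=\{ad-bc=1\}\subset\BA^4_{a,b,c,d}$ is a smooth irreducible affine variety, so is $G^n$, and $A$ is a finitely generated integral domain over $K$. Let $f\in A$ be non-constant and fix $c\in K$. By definition $f$ omits the value $c$ exactly when $f-c$ vanishes nowhere on $G^n$, i.e.\ its zero locus $V(f-c)$ is empty. Because $K$ is algebraically closed and $G^n$ is affine, the Nullstellensatz gives $V(f-c)=\varnothing$ precisely when the ideal $(f-c)$ equals $A$, that is, precisely when $f-c\in A^{\times}$. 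Thus $f$ omits some value if and only if $f-c$ is a unit of $A$ for some $c\in K$, and the whole Lemma reduces to the claim $A^{\times}=K^{\times}$.

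To establish $A^{\times}=K^{\times}$ I would argue on the big cell. On a single factor $G$, the open set $U=\{c\neq 0\}$ is dense (as $G$ is irreducible), and $(a,b,c,d)\mapsto(a,d,c)$ is an isomorphism $U\xrightarrow{\ \sim\ }\BA^{2}\times\BG_m$ with inverse $(a,d,c)\mapsto(a,(ad-1)/c,c,d)$; hence $\Oc(U)=K[a,d][c,c^{-1}]$ is a Laurent polynomial ring over a domain, whose units are $K^{\times}\cdot c^{\BZ}$. For the product, $U^{n}\subset G^{n}$ is dense open with $\Oc(U^{n})=K[a_1,d_1,\dots,a_n,d_n][c_1^{\pm1},\dots,c_n^{\pm1}]$, whose unit group is $K^{\times}\cdot\prod_j c_j^{\BZ}$. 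Now take $u\in A^{\times}$. Its restriction to $U^n$ is a unit there, so $u=\lambda\prod_j c_j^{m_j}$ in $K(G^n)$ for some $\lambda\in K^{\times}$ and integers $m_j$. Each divisor $\{c_j=0\}\cap G^n$ is nonempty and of codimension one (in the $j$-th factor it is $\{c=0,\ ad=1\}\cong\BG_m\times\BA^1$): if some $m_j>0$ then $u$ vanishes there, contradicting that a unit is nowhere zero, while if some $m_j<0$ then the regular function $u$ would have a pole along a divisor of the normal (indeed smooth) variety $G^n$, which is impossible. Hence all $m_j=0$, so $u=\lambda$ on the dense set $U^n$, and therefore $u=\lambda\in K^{\times}$. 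This proves $A^{\times}=K^{\times}$.

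Combining the two steps finishes the argument: if a non-constant $f$ omitted a value $c$, then $f-c$ would be a unit of $A$, hence a nonzero constant, forcing $f$ itself to be constant and contradicting the hypothesis. The main obstacle is the middle step, the identification $A^{\times}=K^{\times}$; the Nullstellensatz reduction and the final deduction are formal. Conceptually, this triviality of units reflects that $\SL(2,K)$ is a connected semisimple group with no nontrivial characters, $\Hom(G,\BG_m)=0$, so that by Rosenlicht's theorem every invertible regular function on it is constant, while Rosenlicht's multiplicativity of unit groups under products upgrades the statement to $G^{n}$; the big-cell computation above is simply a self-contained way to see the same fact without invoking that machinery.
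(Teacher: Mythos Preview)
Your argument is correct and follows a genuinely different route from the paper's. The paper argues geometrically by induction on $n$: for $n=1$ it projects $G\to\mathbb{A}^1$ via the $a$-coordinate, notes that the fibers over $a\ne 0$ are copies of $\mathbb{A}^2$ (on which any value-omitting regular function must be constant), and then uses an affine-line section meeting every fiber to force global constancy; the inductive step repeats the same trick using the splitting $G^n=G^{n-1}\times G$. You instead recast the statement algebraically via the Nullstellensatz as $\Oc(G^n)^\times=K^\times$ and prove this by restricting to the open cell $U^n\cong(\mathbb{A}^2\times\mathbb{G}_m)^n$, identifying its unit group, and ruling out nontrivial monomials by looking at orders along the smooth divisors $\{c_j=0\}$. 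The paper's method is the more elementary one: it needs only that non-constant polynomials on affine space are surjective, and no normality or divisor theory. Your method is more structural: it isolates the underlying reason---triviality of the unit group, equivalently the absence of characters on a semisimple group---and, as you point out, extends at once to any connected semisimple $G$ via Rosenlicht, whereas the fibration argument is specific to $\SL(2)$.
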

\begin{proof} Since all the sets are affine, a function  $f$ regular on $G^k$ is a restriction of a polynomial $P_f$ onto  $G^k.$
We use induction on $k.$

{\bf Step 1. } $k=1. $  $$G=\{ad-bc=1\}\subset \BA^4_{a,b,c,d}$$
is an irreducible quadric.
Assume that $f\in  K[G]$ omits a value.
Let $p:G\to\BA^1_{a}$ be a projection defined by $p(a,b,c,d)=a.$
If $a\ne 0$ then fiber $F_a:=p^{-1}(a)\cong\BA^2_{b,c},$  is an affine space with coordinates $b,c$
because $d=\frac{1+bc}{a}.$  Since $f$ omits a value, the restriction $ f\bigm |_{F_a}$ is  constant  for every $a\ne 0.$
`Therefore it is constant on every fiber  ( note that the fiber $a=0$ is conneceted).
On the other hand, $f$ has to be constant along the curve
$$C=\{(a,0,1,1)\}\cong \BA^1_a(K). $$
 Since curve $C\subset G$  intersects every fiber  $F_a$ of projection $p,$
function  $f$ is constant on $G.$

{\bf Step 2}.  Assume that the statement of the Lemma is valid for all $k\le n.$
Let $f\in K[G^n]$ omit a value.   We have:  $G^n=M\times N, $ where $M=G^{n-1}$ and $N=G.$
Let $p:G^n\to N $ be a natural projection. Then, by induction assumption,  $f$ is constant along every
fiber of this projection. Take $x\in M $ and consider the set $C=x\times N\subset G^n.$   Then$ f\bigm |_{C}=const$
and $C$  intersects every fiber of $p.$  Hence,
$ f$ is constant.
\end{proof}

\begin{Proposition}\label{semisimple}
For every  word $w(x_1,\dots, x_k)\ne id$
the image $w_G$
 contains every  element $z\in G$ with $a:=tr(z)\ne \pm 2.$ \end{Proposition}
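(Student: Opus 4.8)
The plan is to reduce the problem to a statement about the trace map. The key observation is that for $z \in G = \SL(2,K)$ with $\tr(z) = a \neq \pm 2$, the element $z$ is semisimple and its conjugacy class in $\SL(2,K)$ is completely determined by its trace $a$ (since $z$ is diagonalizable with distinct eigenvalues $\lambda, \lambda^{-1}$ where $\lambda + \lambda^{-1} = a$). Since the image $w_G$ is invariant under conjugation (because $w(gx_1g^{-1}, \dots, gx_ng^{-1}) = g\,w(x_1,\dots,x_n)\,g^{-1}$), it suffices to show that $w_G$ contains \emph{some} element of trace $a$ for every $a \neq \pm 2$. Equivalently, I would show that the composite \emph{trace map}
\begin{equation}\label{tracemap}
\tau_w : G^n \xrightarrow{\ w\ } G \xrightarrow{\ \tr\ } K, \qquad \tau_w(x_1,\dots,x_n) = \tr\bigl(w(x_1,\dots,x_n)\bigr),
\end{equation}
takes every value $a \in K \setminus \{\pm 2\}$.

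First I would invoke Borel's theorem (cited in the introduction): since $w \neq \mathrm{id}$, the word map $w : G^n \to G$ is dominant, so its image is Zariski dense in $G$. Composing with the trace (a non-constant regular function on $G$), the function $\tau_w$ is a non-constant regular function on $G^n$. The second and decisive ingredient is \lemref{nonomitting}: a non-constant regular function on $G^n$ omits no value in $K$. Applying this to $\tau_w$ shows that $\tau_w$ is surjective onto $K$; in particular, for each $a \neq \pm 2$ there exists a tuple $(x_1,\dots,x_n)$ with $\tr\bigl(w(x_1,\dots,x_n)\bigr) = a$.

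To finish, I would use the conjugacy argument: given the target $z$ with $\tr(z) = a \neq \pm 2$, pick any $y := w(x_1,\dots,x_n)$ with $\tr(y) = a$ as produced above. Both $y$ and $z$ are semisimple with the same characteristic polynomial $t^2 - at + 1$ and distinct eigenvalues, hence they are conjugate in $\SL(2,K)$ (here algebraic closure of $K$ guarantees diagonalizability and that the conjugating matrix can be chosen in $\SL(2,K)$). Writing $z = g y g^{-1}$, conjugation-equivariance of the word map gives $z = w(gx_1g^{-1}, \dots, gx_ng^{-1}) \in w_G$.

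The main obstacle is ensuring the two steps interlock cleanly: I must confirm that $\tau_w$ is genuinely non-constant rather than identically equal to some constant (this is exactly what dominance of $w$ plus non-constancy of $\tr$ on $G$ provides), and I must verify the conjugacy claim in $\SL(2,K)$ rather than merely in $\GL(2,K)$. The latter is a routine point — two semisimple $2\times 2$ matrices of determinant $1$ with equal trace $\neq \pm 2$ have the same eigenvalues $\lambda \neq \lambda^{-1}$, so both are $\SL(2,K)$-conjugate to $\mathrm{diag}(\lambda,\lambda^{-1})$ — but it is the place where the hypothesis $a \neq \pm 2$ (guaranteeing distinct eigenvalues and hence semisimplicity) is essential and cannot be relaxed. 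The heavy lifting is carried entirely by \lemref{nonomitting}, which converts the Zariski-density supplied by Borel's theorem into honest set-theoretic surjectivity of the trace onto $K \setminus \{\pm 2\}$.
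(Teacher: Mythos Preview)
Your proof is correct and follows essentially the same route as the paper: define $\tau_w=\tr\circ w$, apply \lemref{nonomitting} to conclude it is surjective onto $K$, then use that elements of $\SL(2,K)$ with equal trace $\ne\pm 2$ are conjugate. The only difference is that you explicitly invoke Borel's theorem to certify that $\tau_w$ is non-constant (and you spell out the $\SL(2,K)$-conjugacy step), whereas the paper leaves the non-constancy implicit; your version is slightly more careful but not a different argument.
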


\begin{proof} We consider $G^n\subset\BA(K)^{4n}$ as the product  ($1\le i\le n$) of
$$G_i=\{a_id_i-b_ic_i=1\}\subset \BA^4_{a_i,b_i,ic_i,d_i}.$$

The function $f(a_1,b_1,c_1,d_1,\dots, a_n,b_n,c_n,d_n)=tr(w(x_1,\dots,x_n))$
is a polynomial  in $4n$ variables with integer coefficients, i.e $f\in K[G^n].$
According to \lemref{nonomitting}, it takes on all the values in $K.$

Thus for every value $A\in K$ there is
element $u= w(y_1,\dots,y_n)\in w_G$
such that $tr(u)=A.$

Let now $z\in G, \  A:=tr(z)\ne\pm 2. $  Since $tr(z)=tr(u),$  $z$ is conjugate to $u,$
i.e there is $v\in G$ such that $vuv^{-1}=z$. Hence
$$z=w(vy_1v^{-1},\dots,vy_nv^{-1}).$$\end{proof}

It follows that in order to check whether the word map $w$ is surjective on $G$ (or on $\tilde G$) it
is
sufficient to check whether the elements $z$ with $tr(z)=\pm 2$
(or the elements $z$ with $tr(z)= 2,$ respectively) are in the image.
For that we need a version of the Embedding Magnus Theorem.

\section{ Variation on  Magnus Embedding Theorem: Statements}\label{magnus}

Let $n \ge 2$ be an integer and $\Lambda_n=\Z[t_1, t_1^{-1}, \dots , t_n, t_n^{-1}]$ be the ring of Laurent polynomials in $n$ independent variables
$t_1, \dots , t_n$ over $\Z$.
Let  $F=F_n$ be a free group of rank $n$ with generators $\{g_1, \dots , g_n\}$.  Recall:  we write $F^{(1)}$ for the derived subgroup of $F$ and
$F^{(2)}$ for the derived subgroup of $F^{(1)}$.  We have
$$F^{(2)}\subset F^{(1)}\subset F;$$
both $F^{(1)}$ and $F^{(2)}$ are normal subgroups in $F$. The quotient $A:=F/F^{(1)}= \Z^n$ is a free abelian group of rank $n$ with (standard) generators $\{e_1, \dots , e_n\}$ where each $e_i$ is the image of $g_i$ ($1 \le i \le n$). It is well known that the  group ring $\Z[A]$ of $A$ is canonically isomorphic to $\Lambda_n$: under this isomorphism each
$$e_i \in A\subset \Z[A]$$
 goes to
$$t_i \in \Z[t_1, t_1^{-1}, \dots , t_n, t_n^{-1}]=\Lambda_n.$$
We write $R_n$ for the ring of polynomials
$$\Lambda_n[s_1, \dots , s_n]= \Z[t_1, t_1^{-1}, \dots , t_n, t_n^{-1}; s_1, \dots , s_n]$$
in $n$ independent variables $s_1, \dots , s_n$ over $\Lambda_n$.
If $R$ is a commutative ring with $1$ then we write $T(R)$ for the group of invertible $2\times 2$ matrices of the form
$$\begin{bmatrix}
a&0\\
b&1
\end{bmatrix}$$
with $a \in R^{*}, b \in R$
and $ST(R)$  for the group of unimodular $2\times 2$ matrices of the form
$$\begin{bmatrix}
a&0\\
b&a^{-1}
\end{bmatrix}$$
with $a \in R^{*}, b \in R$. We have
$$T(R)\subset \GL(2,R), \ ST(R) \subset \SL(2,R).$$
Every homomorphism $R \to R^{\prime}$ of commutative rings (with 1) induces the natural group homomorphisms
$$T(R)\to T(R^{\prime}), \ ST(R)\to ST(R^{\prime}),$$
which are injective if $R \to R^{\prime}$ is injective.

The following assertion (that is based on the properties of the famous Magnus embedding  \cite{MagnusCrelle})  was proven in \cite[Lemma 2]{Wehrfritz}.

\begin{theorem}
\label{W}
The assignment
$$g_i \mapsto \begin{bmatrix}
t_i&0\\
s_i&t_i^{-1}
\end{bmatrix} \ (1 \le i \le n)$$
extends to a group homomorphism
$$\mu_W: F \to ST(\Lambda_n)$$ with kernel $F^{(2)}$ and therefore defines an embedding
$$F/F^{(2)} \hookrightarrow ST(R_n)\subset \SL(2,R_n).$$
\end{theorem}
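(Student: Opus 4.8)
The plan is to construct $\mu_W$ from the universal property of the free group, split off its diagonal (abelianized) part, and then reduce the computation of $\ker\mu_W$ to the injectivity of the abelianized Fox derivatives, i.e.\ to the classical Magnus embedding theorem. For the construction, note that each proposed image $M_i=\begin{bmatrix}t_i&0\\s_i&t_i^{-1}\end{bmatrix}$ lies in $ST(R_n)\subset\SL(2,R_n)$, since its diagonal entry $t_i$ is a unit of $\Lambda_n\subset R_n$ while $s_i\in R_n$ (so the image lies in $ST(R_n)$). As $F$ is free on $g_1,\dots,g_n$, the assignment $g_i\mapsto M_i$ extends uniquely to a homomorphism $\mu_W\colon F\to ST(R_n)$; I write $\mu_W(w)=\begin{bmatrix}\tau(w)&0\\\sigma(w)&\tau(w)^{-1}\end{bmatrix}$.

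Next I would separate the diagonal. The top-left entry is a group homomorphism $\tau\colon ST(R_n)\to R_n^{*}$, and $\tau\circ\mu_W$ sends $g_i\mapsto t_i$; as the monomials $t_1,\dots,t_n$ generate a free abelian group of rank $n$ in $R_n^{*}$, this composite factors as $F\twoheadrightarrow A=F/F^{(1)}\hookrightarrow R_n^{*}$ and hence has kernel exactly $F^{(1)}$. Thus $\mu_W$ carries $F^{(1)}$ into the abelian subgroup $U=\ker\tau=\{\begin{bmatrix}1&0\\b&1\end{bmatrix}:b\in R_n\}\cong(R_n,+)$, so $\mu_W(F^{(2)})=\mu_W([F^{(1)},F^{(1)}])\subseteq[U,U]=1$. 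This already gives $F^{(2)}\subseteq\ker\mu_W$ and an induced homomorphism $\overline\sigma\colon F^{(1)}/F^{(2)}\to(R_n,+)$; the whole theorem now comes down to showing that $\overline\sigma$ is injective.

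To analyze $\overline\sigma$ I would read off $\sigma$ recursively. The multiplication law in $ST$ yields the twisted cocycle identity $\sigma(uv)=\sigma(u)\tau(v)+\tau(u)^{-1}\sigma(v)$, and a straightforward induction then shows that $\sigma(w)$ is homogeneous of degree one in the $s_i$, say $\sigma(w)=\sum_{i=1}^{n}P_i(w)\,s_i$ with $P_i(w)\in\Lambda_n$, so $\overline\sigma$ lands in the free $\Lambda_n$-module $\bigoplus_i\Lambda_n s_i\cong\Lambda_n^{\,n}$. Comparing this recursion with the defining recursion of the Fox free derivatives $\partial/\partial g_i$ (and using the fundamental identity $w-1=\sum_i(\partial w/\partial g_i)(g_i-1)$), a direct induction gives, for $w\in F^{(1)}$, the relation $P_i(w)=t_i^{-1}\,\theta\bigl(\overline{\partial w/\partial g_i}\bigr)$, where $\overline{(\cdot)}$ denotes the image in $\Lambda_n=\Z[A]$ and $\theta\colon\Lambda_n\to\Lambda_n$ is the monomial substitution $t_j\mapsto t_j^{-2}$ forced by the unimodular normalization (one computes $\sigma(gwg^{-1})=\tau(g)^{-2}\sigma(w)$ for $w\in F^{(1)}$).

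The main obstacle is the injectivity of $\overline\sigma$, and this is where the genuine content lies. Because $\theta$ is an injective ring endomorphism of $\Lambda_n$ and $t_i^{-1}$ is a unit, the relation above shows that all $P_i(w)$ vanish if and only if all abelianized Fox derivatives $\overline{\partial w/\partial g_i}$ vanish; hence $\ker\overline\sigma$ is exactly the set of classes in $F^{(1)}/F^{(2)}$ with vanishing abelianized Fox derivatives. That this forces $w\in F^{(2)}$ is precisely the classical Magnus embedding theorem — equivalently, the identification of $F^{(1)}/F^{(2)}$, via Fox derivatives, with the kernel of the $\Lambda_n$-linear map $\Lambda_n^{\,n}\to\Lambda_n$, $(x_1,\dots,x_n)\mapsto\sum_i x_i(t_i-1)$. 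Granting this, $\ker\mu_W=F^{(2)}$ and $\mu_W$ descends to the asserted embedding $F/F^{(2)}\hookrightarrow ST(R_n)\subset\SL(2,R_n)$. Accordingly I would either invoke \cite{Wehrfritz} (resp.\ the Magnus Embedding Theorem) directly for this last step, or, for a self-contained argument, prove that $w\mapsto(\overline{\partial w/\partial g_i})_i$ separates $F^{(1)}/F^{(2)}$ using that the latter is generated as a $\Lambda_n$-module by the classes of the commutators $[g_i,g_j]$.
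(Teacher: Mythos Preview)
The paper does not give its own proof of this theorem: it simply records it as \cite[Lemma~2]{Wehrfritz}, attributing the underlying idea to the classical Magnus embedding \cite{MagnusCrelle}. Your argument is correct and is essentially an explicit unpacking of what lies behind that citation. Your cocycle identity $\sigma(uv)=\sigma(u)\tau(v)+\tau(u)^{-1}\sigma(v)$ and the conjugation formula $\sigma(gwg^{-1})=\tau(g)^{-2}\sigma(w)$ for $w\in F^{(1)}$ are both right, and the resulting relation $P_i(w)=t_i^{-1}\theta\bigl(\overline{\partial w/\partial g_i}\bigr)$ with $\theta(t_j)=t_j^{-2}$ checks out (for instance on $w=[g_1,g_2]$ one finds $P_1=t_1^{-1}(1-t_2^{-2})=t_1^{-1}\theta(1-t_2)$). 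Since $\theta$ is an injective ring endomorphism of $\Lambda_n$, the kernel question reduces to the vanishing of all abelianized Fox derivatives on $F^{(1)}$, which is exactly the classical Magnus embedding theorem. So both you and the paper rest on the same classical input; you have made the reduction explicit via Fox calculus rather than quoting Wehrfritz's lemma wholesale.

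One small remark: the displayed codomain ``$\mu_W:F\to ST(\Lambda_n)$'' in the statement is a typo for $ST(R_n)$, since the $s_i$ lie in $R_n$; you correctly work in $ST(R_n)$, as does the paper's concluding embedding.
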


It follows from Theorem \ref{W} that if $K$ is a field of characteristic zero, whose transcendence degree over $\Q$ is, at least, $2n$ then there is an embedding
$$F/F^{(2)} \hookrightarrow ST(K)\subset \SL(2,K).$$
(In particular, it works for $K=\R$, $\C$ or the field $\Q_p$ of $p$-adic numbers \cite{Wehrfritz}.)
 The aim of the following considerations is   to replace in this statement the lower bound $2n$ by $n$.

\begin{theorem}
\label{main}
The assignment
$$g_i \mapsto \begin{bmatrix}
t_i&0\\
1&t_i^{-1}
\end{bmatrix} \ (1 \le i \le n)$$
extends to a group homomorphism
$$\mu_1: F \to ST(\Lambda_n)$$ with kernel $F^{(2)}$ and therefore defines an embedding
$$F/F^{(2)} \hookrightarrow ST(\Lambda_n)\subset \SL(2,\Lambda_n).$$
\end{theorem}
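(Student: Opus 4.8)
The plan is to realize $\mu_1$ as a specialization of the Wehrfritz homomorphism $\mu_W\colon F\to ST(R_n)$ of Theorem~\ref{W}, and then to isolate the one extra idea needed to see that setting the variables $s_i$ equal to $1$ does not enlarge the kernel. Let $\epsilon\colon R_n\to\Lambda_n$ be the ring homomorphism fixing each $t_i$ and sending every $s_i\mapsto 1$; it induces $\epsilon_*\colon ST(R_n)\to ST(\Lambda_n)$, and since $\epsilon_*(\mu_W(g_i))=\mu_1(g_i)$ we have $\mu_1=\epsilon_*\circ\mu_W$. In particular $\mu_1$ is a well-defined homomorphism extending the given assignment, and $\ker\mu_1\supseteq\ker\mu_W=F^{(2)}$, which is the easy inclusion. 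For the reverse inclusion I would write $\mu_W(w)=\begin{bmatrix} t^{\bar w}&0\\ b_W(w)&t^{-\bar w}\end{bmatrix}$, where $t^{\bar w}\in\Lambda_n$ is the image of $w$ under abelianization $F\to A=\Z^n$ and, by an immediate induction on word length using the multiplication rule in $ST$, the lower-left entry is $\Lambda_n$-linear in the $s_i$, say $b_W(w)=\sum_{i=1}^n D_i(w)\,s_i$ with $D_i(w)\in\Lambda_n$. Then $\mu_1(w)$ has lower-left entry $\sum_i D_i(w)$, so $\mu_1(w)=\mathrm{Id}$ forces $\bar w=0$ (hence $w\in F^{(1)}$) together with $\sum_{i=1}^n D_i(w)=0$. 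Thus everything reduces to showing that, for $w\in F^{(1)}$, the single relation $\sum_i D_i(w)=0$ already forces each $D_i(w)=0$, i.e. $\mu_W(w)=\mathrm{Id}$.

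The device I would use is a $(\Z/2\Z)^n$-grading of $R_n$ in which both $t_i$ and $s_i$ are given degree $\mathbf{e}_i$, the $i$-th standard basis vector of $(\Z/2\Z)^n$ (note that $t_i^{-1}$ then also has degree $\mathbf{e}_i$). Each matrix $\mu_W(g_i^{\pm1})$ has all of its nonzero entries homogeneous of the single degree $\mathbf{e}_i$, and the entrywise degree is additive under matrix multiplication; hence $\mu_W(w)$ is homogeneous with all entries of degree $\sum_i N_i\,\mathbf{e}_i$, where $N_i$ is the number of occurrences of $g_i^{\pm1}$ in $w$. For $w\in F^{(1)}$ the exponent sum of each generator vanishes, so every $N_i$ is even and $\mu_W(w)$ is homogeneous of degree $0$. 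Reading off the lower-left entry $\sum_i D_i(w)s_i$, each coefficient $D_i(w)$ is then forced to be homogeneous of degree $\mathbf{e}_i$ in $\Lambda_n$.

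Finally, the elements $\mathbf{e}_1,\dots,\mathbf{e}_n$ are pairwise distinct, so $D_1(w),\dots,D_n(w)$ lie in $n$ distinct homogeneous components of $\Lambda_n$ and no cancellation between them is possible. Therefore $\sum_i D_i(w)=0$ forces $D_i(w)=0$ for every $i$, whence $b_W(w)=0$, $\mu_W(w)=\mathrm{Id}$, and $w\in\ker\mu_W=F^{(2)}$. This gives $\ker\mu_1=F^{(2)}$ and the asserted embedding $F/F^{(2)}\hookrightarrow ST(\Lambda_n)\subset\SL(2,\Lambda_n)$. The point that genuinely needs an idea is this last injectivity step, which I expect to be the main obstacle: a naive count of ranks over $\Lambda_n$ (the module $F^{(1)}/F^{(2)}$ has rank $n-1$, while $w\mapsto\sum_i D_i(w)$ lands in a rank-$1$ module) would wrongly suggest that for $n\ge 3$ the specialization $s_i\mapsto 1$ must collapse the kernel. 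The grading is exactly what rescues the statement, by keeping the $n$ coefficients $D_i(w)$ in separate graded pieces so that their summing to zero is as strong as their all vanishing; the remainder is just the formal specialization of Theorem~\ref{W}.
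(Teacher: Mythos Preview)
Your argument is correct and complete. The $(\Z/2\Z)^n$-grading with $\deg t_i=\deg s_i=\mathbf{e}_i$ is exactly the right device: the generator matrices and their inverses are homogeneous of degree $\mathbf{e}_i$, homogeneity is multiplicative in $ST$, and for $w\in F^{(1)}$ the exponent sums vanish so each letter count $N_i$ is even and $\mu_W(w)$ sits in degree $0$. That forces each $D_i(w)$ to be homogeneous of degree $\mathbf{e}_i$ in $\Lambda_n$, and since the $\mathbf{e}_i$ are distinct no cancellation is possible in $\sum_i D_i(w)$.

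The paper takes a different route. Rather than specializing $\mu_W$ directly, it first applies Lemma~\ref{magnusE} (via Wilson's lemma) with $u_i=t_i^2$ and $y_i=t_i$ to produce an embedding $\mu\colon F/F^{(2)}\hookrightarrow T(\Lambda_n)$, $g_i\mapsto\begin{bmatrix}t_i^2&0\\ t_i&1\end{bmatrix}$; the key input is that $t_1,\dots,t_n$ are linearly independent over $\Q(t_1^2,\dots,t_n^2)$. It then twists by the character $\kappa\colon g_i\mapsto t_i$ to obtain $\mu_1(g)=\kappa(g)^{-1}\mu(g)$, and reads off $\ker\mu_1=\ker\mu\cap F^{(1)}=F^{(2)}$. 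Your grading is the same linear-independence fact in disguise: $D_i(w)$ being homogeneous of degree $\mathbf{e}_i$ says precisely that $D_i(w)\in t_i\cdot\Z[t_1^{\pm 2},\dots,t_n^{\pm 2}]$, so $\sum_i D_i(w)=0\Rightarrow D_i(w)=0$ is exactly the linear independence of the $t_i$ over $\Q(t_1^2,\dots,t_n^2)$. What your route buys is directness: you never leave $ST$, you avoid the auxiliary map into $T(\Lambda_n)$ and the character twist, and you need only Theorem~\ref{W} as a black box. What the paper's route buys is the more general Lemma~\ref{magnusE}, from which Corollary~\ref{degree} (embedding over any field of transcendence degree $\ge n$) follows immediately.
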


\begin{remark}
Let
$$\ev_1: R_n=\Lambda_n[s_1, \dots , s_n] \to \Lambda_n$$
be the $\Lambda_n$-algebra homomorphism that sends all $s_i$ to $1$ and let
$${\ev_1}^{*}: ST(R_n) \to ST(\Lambda_n)$$
be the group homomorphism induced by $\ev_1$.  Then  $\mu_1$ coincides with the composition
$${\ev_1}^{*} \mu_W: F \to ST(R_n) \to ST(\Lambda_n).$$
\end{remark}

\begin{corollary}
\label{degree}
 Let $K$ be a field of characteristic zero. Suppose that the transcendence degree of $K$ over $\Q$ is, at least, $n$.  Then there is a group embedding
$$F/F^{(2)} \hookrightarrow ST(K)\subset \SL(2,K).$$
\end{corollary}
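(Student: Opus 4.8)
The plan is to deduce the corollary from Theorem~\ref{main} by a specialization argument, substituting suitable elements of $K$ for the Laurent variables $t_1,\dots,t_n$. By \thmref{main} we already have an embedding $\mu_1\colon F/F^{(2)}\hookrightarrow ST(\Lambda_n)$, where $\Lambda_n=\Z[t_1,t_1^{-1},\dots,t_n,t_n^{-1}]$. It therefore suffices to produce an injective ring homomorphism $\Lambda_n\to K$, since by the remark recorded in \secref{magnus} an injective homomorphism of commutative rings $R\to R'$ induces an injective group homomorphism $ST(R)\to ST(R')$; composing this with $\mu_1$ will give the desired embedding into $ST(K)\subset\SL(2,K)$.

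First I would use the transcendence degree hypothesis: since the transcendence degree of $K$ over $\Q$ is at least $n$, I can choose elements $\tau_1,\dots,\tau_n\in K$ that are algebraically independent over $\Q$. In particular each $\tau_i$ is nonzero, so $\tau_i\in K^{*}$, and there is a well-defined $\Z$-algebra homomorphism
$$\ev\colon \Lambda_n\to K,\qquad t_i\mapsto\tau_i\ (1\le i\le n).$$
The key step is to check that $\ev$ is injective. Every element of $\Lambda_n$ can be written as $P(t_1,\dots,t_n)/(t_1^{k_1}\cdots t_n^{k_n})$ with $P\in\Z[t_1,\dots,t_n]$ and $k_i\ge 0$; the denominator maps to the invertible element $\tau_1^{k_1}\cdots\tau_n^{k_n}\in K^{*}$, so such an element lies in $\ker(\ev)$ exactly when $P(\tau_1,\dots,\tau_n)=0$. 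By the algebraic independence of $\tau_1,\dots,\tau_n$ over $\Q$ (equivalently over $\Z$, as $\fchar(K)=0$) this forces $P=0$, whence $\ev$ is injective.

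Finally I would assemble the maps. The injective ring homomorphism $\ev$ induces an injective group homomorphism $\ev^{*}\colon ST(\Lambda_n)\to ST(K)$, and the composition
$$F/F^{(2)}\;\overset{\mu_1}{\hookrightarrow}\;ST(\Lambda_n)\;\overset{\ev^{*}}{\hookrightarrow}\;ST(K)\subset\SL(2,K)$$
is injective as a composite of injective maps, which is exactly the claimed embedding. I do not expect a genuine obstacle here: all the real content already lies in \thmref{main}, whose use of the single family of variables $t_i$ (rather than the $2n$ variables $t_i,s_i$ appearing in Theorem~\ref{W}) is precisely what allows the bound on the transcendence degree to be lowered to $n$. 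The only point that requires care is that the specialization must be chosen to be algebraically independent rather than merely nonzero or generic, and this is the sole place where the hypothesis on the transcendence degree is consumed.
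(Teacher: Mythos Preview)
Your proof is correct and follows essentially the same route as the paper: choose algebraically independent elements in $K$, specialize the $t_i$ to them to get an injective ring homomorphism $\Lambda_n\hookrightarrow K$, and compose the induced injection $ST(\Lambda_n)\hookrightarrow ST(K)$ with the embedding $\mu_1$ from \thmref{main}. You in fact spell out the injectivity of the specialization map more carefully than the paper does.
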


Proof of Theorem \ref{main} is based on the following observation.
\begin{Lemma}
\label{magnusE}
Let $K$ be a field of characteristic zero. Suppose that the transcendence degree of $K$ over $\Q$ is, at least, $n$ and let $\{u_1, \dots , u_n\}\subset K$ be an $n$-tuple of algebraically independent elements (over $\Q$). Let $\Q(u_1, \dots , u_n)$ be the subfield of $K$ generated by $\{u_1, \dots , u_n\}$ and let us consider $K$ as the  $\Q(u_1, \dots , u_n)$-vector space.
Let $\{y_1, \dots , y_n\}\subset K$  be a $n$-tuple that is linearly independent over  $\Q(u_1, \dots , u_n)$. Let $R$ be the subring of $K$ generated
by $3n$ elements $u_1, u_1^{-1}, \dots , u_n, u_n^{-1}; y_1, \dots, y_n$.

Then the assignment
$$g_i \mapsto \begin{bmatrix}
u_i&0\\
y_i&1
\end{bmatrix} \ (1 \le i \le n)\in T(R)$$
extends to a group homomorphism
$$\mu: F \to T(R)\subset T(K)$$ with kernel $F^{(2)}$ and therefore defines an embedding
$$F/F^{(2)} \hookrightarrow T(R)\subset  T(K).$$
\end{Lemma}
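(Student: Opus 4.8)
The plan is to exploit that $T(R)$ is metabelian and that the lower-left entry of $\mu(w)$ depends \emph{linearly} on $y_1,\dots,y_n$; the hypotheses on the $u_i$ and $y_i$ then let me separate the abelian part of the kernel from the commutator part and reduce the whole statement to the classical Magnus embedding. First, since $F$ is free the assignment extends uniquely to a homomorphism $\mu\colon F\to T(R)$ (each image lies in $T(R)$ because $u_i\in R^{*}$, as $u_i,u_i^{-1}\in R$). Writing a general element of $T(R)$ as $\begin{bmatrix}a&0\\ b&1\end{bmatrix}$, the multiplication rule shows that $N=\{\begin{bmatrix}1&0\\ b&1\end{bmatrix}:b\in R\}\cong(R,+)$ is an abelian normal subgroup with $T(R)/N\cong R^{*}$ abelian. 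The composite $F\to T(R)\to R^{*}$, $g_i\mapsto u_i$, kills $F^{(1)}$, so $\mu(F^{(1)})\subseteq N$, and since $N$ is abelian $\mu(F^{(2)})=[\mu(F^{(1)}),\mu(F^{(1)})]\subseteq[N,N]=\{1\}$. Thus $F^{(2)}\subseteq\ker\mu$ and $\mu$ factors through $F/F^{(2)}$.

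For the reverse inclusion I would argue as follows. The $(1,1)$-entry of $\mu(w)$ is $\prod_i u_i^{d_i(w)}$, where $d_i(w)$ is the exponent sum of $g_i$ in $w$; since the $u_i$ are algebraically independent over $\Q$ they are multiplicatively independent, so $\mu(w)=\mathrm{id}$ forces every $d_i(w)=0$, i.e. $w\in F^{(1)}$. On $F^{(1)}$ the map $\beta\colon w\mapsto b_w$ (the lower-left entry) is a homomorphism into $(R,+)$ and factors through $F^{(1)}/F^{(2)}$, so the statement reduces to the injectivity of $\beta$ on $F^{(1)}/F^{(2)}$.

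This injectivity is the crux. By induction on word length, the multiplication rule in $T(R)$ yields $b_w=\sum_{i=1}^{n}c_i(w)\,y_i$ with each coefficient $c_i(w)$ lying in the subring $\Z[u_1^{\pm1},\dots,u_n^{\pm1}]$; the essential point is that the $y_i$ enter only linearly. Since $\{y_1,\dots,y_n\}$ is linearly independent over $\Q(u_1,\dots,u_n)$ and the $c_i(w)$ lie in that field, $\beta(w)=0$ forces $c_i(w)=0$ for all $i$. Identifying $\Z[u_1^{\pm1},\dots,u_n^{\pm1}]$ with $\Lambda_n$ via $u_i\leftrightarrow t_i$ (an isomorphism, again by algebraic independence of the $u_i$), the assignment $w\mapsto(c_1(w),\dots,c_n(w))$ is a $\Lambda_n$-linear map $F^{(1)}/F^{(2)}\to\Lambda_n^{\,n}$. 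A direct comparison of the defining recursions shows that for each $w\in F^{(1)}$ this vector differs from the vector of reduced Fox derivatives $(\overline{\partial w/\partial g_i})_i$ only by coordinatewise multiplication by units of $\Lambda_n$, so the two vanish simultaneously. The Fox-derivative map is injective on $F^{(1)}/F^{(2)}$ by the Magnus Embedding Theorem \cite{MagnusCrelle} (the same input underlying \thmref{W}), whence $c_i(w)=0$ for all $i$ gives $w\in F^{(2)}$.

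I expect the main obstacle to be precisely this last step: verifying that $b_w$ is $\Z[u^{\pm1}]$-linear in the $y_i$ and matching the coordinate vector $(c_i(w))_i$ with the injective Magnus derivation. Everything else is formal. It is worth emphasizing how the hypotheses are consumed: algebraic independence of the $u_i$ is used twice (for multiplicative independence, to force the kernel into $F^{(1)}$, and for faithfulness of the $\Lambda_n$-action, to identify the target with $\Lambda_n^{\,n}$), while linear independence of the $y_i$ over $\Q(u_1,\dots,u_n)$ is exactly what decouples the single equation $b_w=0$ into the $n$ equations $c_i(w)=0$. One cannot reduce directly to \thmref{W}, since its matrices lie in $ST$ (determinant $1$, diagonal $(t_i,t_i^{-1})$) whereas the present ones lie in $T$ with determinant $u_i$; the $T$-form genuinely requires its own appeal to Magnus's theorem.
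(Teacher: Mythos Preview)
Your argument is correct and is essentially the paper's own: both hinge on the fact that the $(2,1)$-entries of $\mu(F)$ lie in the free $\Lambda$-module $\Lambda y_1\oplus\cdots\oplus\Lambda y_n$ (your linearity-plus-independence step), whereupon the paper invokes Wilson's packaging of the Magnus embedding \cite[Lemma~1(c)]{Wilson} in lieu of your explicit Fox-derivative computation. One small caveat: the vector $(c_i(w))_i$ is related to $(\overline{\partial w/\partial g_i})_i$ not merely by unit factors but also through the involution $t_j\mapsto t_j^{-1}$ of $\Lambda_n$ (equivalently, $c_i(w)=\overline{\partial\bar w/\partial g_i}$ for the reversed word $\bar w$); since this is a ring automorphism and $w\mapsto\bar w$ preserves $F^{(2)}$, your simultaneous-vanishing conclusion is unaffected.
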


\begin{example}
\label{basicE}
{\rm Let $K$ be the field $\Q(t_1, \dots , t_n)$ of rational functions in $n$ independent variables $t_1, \dots , t_n$ over $\Q$. One may view $\Lambda_n$
as the subring of $K$ generated by $2n$ elements $t_1, t_1^{-1}, \dots , t_n, t_n^{-1}$. By definition, the
 $n$-tuple
$\{t_1, \dots , t_n\}\subset K$  is algebraically independent (over $\Q$). Clearly, the  $n$-tuple
$$\{u_1=t_1^2, \dots , u_i=t_i^2, \dots,  u_n=t_n^2\}\subset K$$
 is also algebraically independent. Then the $n$  elements
$$y_1=t_1, \dots, y_i=t_i, \dots, y_n=t_n$$
 are linearly independent over the (sub)fileld $\Q(t_1^2, \dots , t_n^2)=\Q(u_1, \dots , u_n)$. Indeed, if a rational function
$$f(t_1, \dots, t_n)=\sum_{i=1}^n t_i\cdot f_i$$
where all $f_i \in \Q(t_1^2, \dots , t_n^2)$ then
$$2 t_1 f_1=f(t_1, t_2, \dots, t_n)-f(-t_1, t_2, \dots, t_n), \dots,$$
$$ 2t_i f_i=f(t_1, \dots,t_i, \dots,  t_n)-f(t_1, \dots, -t_i, \dots, t_n), \dots,$$
$$ 2t_n f_n=f(t_1, \dots,t_i, \dots,  t_n)-f(t_1, \dots, t_i, \dots, -t_n).$$
This proves that if $f=0$ then all $f_i$ are also zero, i.e., the set $\{t_1, \dots, t_n\}$ is linearly independent over $\Q(t_1^2, \dots t_n^2)$.

By definition,  $R$ coincides with the subring of $K$ generated by  $3n$ elements
$$t_1^2, t_1^{-2}, \dots , t_n^2, t_n^{-2}; t_1, \dots , t_n.$$
This implies easily that $R=\Lambda_n$. } Applying Lemma \ref{magnusE}, we conclude
the Example by the following statement.

{\sl  The assignment
$$g_i \mapsto \begin{bmatrix}
t_i^2&0\\
t_i&1
\end{bmatrix} \ (1 \le i \le n)\in T(\Lambda_n)$$
extends to a group homomorphism
$$\mu: F \to T(R) =T(\Lambda_n)$$
 with kernel $F^{(2)}$ and therefore defines an embedding
$$F/F^{(2)} \hookrightarrow  T(\Lambda_n).$$}
\end{example}

We prove Lemma \ref{magnusE}, Theorem \ref{main} and Corollary \ref{degree} in Section \ref{prf}.

\section{ Variation on  the Magnus Embedding Theorem: Proofs}
\label{prf}

\begin{proof}[Proof of Lemma \ref{magnusE}]
Let
$$\Lambda \subset \Q(u_1, \dots , u_n)\subset K$$
be the subring generated by $2n$ elements $u_1, u_1^{-1}, \dots , u_n, u_n^{-1}$.
Since $u_i$ are algebraically independent over $K$, the assignment
$$t_i \mapsto u_i, \ t_i^{-1} \mapsto u_i^{-1}$$
 extends to a ring isomorphism
$\Lambda_n \cong \Lambda$.  The linear independence of $y_i$'s over $\Q(u_1, \dots , u_n)$ implies that
$M=\Lambda \cdot y_1 + \dots + \Lambda \cdot y_n \subset R\subset K$
is a free $\Lambda$-module of rank $n$.
On the other hand, let
$$U \subset \Lambda^{*}\subset \Q(u_1, \dots , u_n)^{*}\subset K^{*}$$
be the multiplicative (sub)group generated by all $u_i$. The assignment
$g_i\mapsto u_i$ extends to the surjective group  homomorphism
$$\delta: F \twoheadrightarrow U$$
with kernel $F^{(1)}$ and gives rise to the group isomorphism
$$A \cong U,$$
which sends $e_i$ to $u_i$ and allows us to identify the group ring $\Z[U]$ of $U$ with $\Lambda\cong \Lambda_n=\Z[A]$.
Notice that $M$ carries the natural structure of free $\Z[U]$-module of rank $n$ defined by
$$\lambda(m): =\lambda \cdot m \in K \ \forall \lambda \in \Z[U]=\Lambda\subset K,  m \in M\subset K.$$
We have
$$\mu(F)\subset \begin{bmatrix}
U&0\\
M&1
\end{bmatrix} \subset T(R)\subset \GL_2(R).$$
It follows from  \cite[Lemma 1(c) on p. 175]{Wilson} that $\ker(\mu)$ coincides with the derived subgroup of $\ker(\delta)$.
Since $\ker(\delta)=F^{(1)}$, we conclude that $\ker(\mu)=F^{(2)}$ and we are done.
\end{proof}

\begin{proof}[Proof of Theorem \ref{main}]
Let us return to the situation of Example \ref{basicE}. In particular, the group embedding (we know that it is an embedding, thanks to already proven Lemma \ref{magnusE})
$$\mu: F \hookrightarrow T(\Lambda_n)\subset \GL_2(\Lambda_n)$$
is defined by
$$\mu(g_i) = \begin{bmatrix}
t_i^2&0\\
t_i&1
\end{bmatrix} \ \in T(\Lambda_n)$$
for all $g_i$.

Let  us consider the group homomorphism
$$\kappa: F \to \Lambda_n^{*}, \ g_i \mapsto t_i.$$
Since $t_i$ are algebraically independent, they are multiplicatively independent and
$$\ker(\kappa)=F^{(1)}.$$
I claim that $\mu_1: F \to ST(\Lambda_n)$ coincides with
 the group homomorpism
$$g \mapsto \kappa(g)^{-1}\cdot \mu(g).$$
Indeed, we have for all $g_i$
$$\kappa(g_i)^{-1}\cdot\mu(g_i)=t_i^{-1}\cdot  \begin{bmatrix}
t_i^2&0\\
t_i&1
\end{bmatrix} = \begin{bmatrix}
t_i&0\\
1& t_i^{-1}
\end{bmatrix}
=\mu_1(g_i)
\subset ST(\Lambda_n),$$
which proves our claim. Recall that we
 need to check that $\ker(\mu_1)=F^{(2)}$. In order to do that, first notice that
$\mu_1(g)$ is of the form
$\begin{bmatrix}
\kappa(g)&0\\
*& \kappa(g)^{-1}
\end{bmatrix}$
for all $g \in F$
just because it is true for all $g=g_i$. This implies that
$$\ker(\mu_1)\subset \ker(\kappa)=F^{(1)}.$$
But $\mu=\mu_1$ on $F^{(1)}$. This implies that
$$\ker(\mu_1)=\ker(\mu)\bigcap F^{(1)}.$$
However, as we have seen in Example \ref{basicE},
$$\ker(\mu)= F^{(2)}\subset F^{(1)}.$$
This implies that
$$\ker(\mu_1)=F^{(2)}\bigcap F^{(1)}=F^{(2)}$$
and we are done.
\end{proof}

\begin{proof}[Proof of Corollary \ref{degree}]
There exists an $n$-tuple $\{x_1, \dots, x_n\}\subset K$ that is algebraically independent over $\Q$. The assignment
$$t_i \mapsto x_i, \ t_i^{-1} \mapsto x_i^{-1}$$
 extends to an {\sl injective} ring homomorphism
$$\Lambda_n= \Z[t_1, t_1^{-1}, \dots , t_n, t_n^{-1}]\hookrightarrow K.$$
This implies that $ST(\Lambda_n)$ is isomorphic to a subgroup of $ST(K)$.
Thanks to Theorem \ref{main}, $F/F^{(2)}$ is isomorphic to a subgroup of $ST(\Lambda_n)$. This implies that $F/F^{(2)}$ is isomorphic to a subgroup of
$ST(K)$.
\end{proof}

{\bf Remark}. Similar arguments prove the following generalization of Theorem \ref{main}.

\begin{theorem}
Let  $\{b_1, \dots, b_n\}$ be an $n$-tuple of nonzero integers. Then
the assignment
$$g_i \mapsto \begin{bmatrix}
t_i&0\\
b_i&t_i^{-1}
\end{bmatrix} \ (1 \le i \le n)$$
extends to a group homomorphism
$F \to ST(\Lambda_n)$ with kernel $F^{(2)}$.
\end{theorem}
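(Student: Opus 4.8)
The plan is to follow the proof of Theorem \ref{main} essentially verbatim, the only new input being a linear-independence check adapted to the integers $b_i$. As in that proof, I would produce the desired homomorphism in two stages: first build a homomorphism into the group $T(\Lambda_n)$ of matrices with $1$ in the bottom-right corner by invoking Lemma \ref{magnusE}, and then twist it by a suitable abelian character so that its image lands in the unimodular group $ST(\Lambda_n)$.

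For the first stage I would work inside $K=\Q(t_1,\dots,t_n)$ and apply Lemma \ref{magnusE} with $u_i=t_i^2$ and $y_i=b_i t_i$, exactly as in Example \ref{basicE} but with $t_i$ replaced by $b_i t_i$. The $n$-tuple $\{t_1^2,\dots,t_n^2\}$ is algebraically independent over $\Q$, so the hypothesis on the $u_i$ holds. The required linear independence of $\{b_1 t_1,\dots,b_n t_n\}$ over $\Q(t_1^2,\dots,t_n^2)$ follows at once from the linear independence of $\{t_1,\dots,t_n\}$ established in Example \ref{basicE}: if $\sum_i c_i(b_i t_i)=0$ with $c_i\in\Q(t_1^2,\dots,t_n^2)$, then $\sum_i(b_i c_i)t_i=0$ with $b_i c_i\in\Q(t_1^2,\dots,t_n^2)$, whence $b_i c_i=0$ and, since $b_i\ne 0$, $c_i=0$. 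Lemma \ref{magnusE} then yields a homomorphism $\mu:F\to T(R)$ with kernel $F^{(2)}$, where $R$ is the subring generated by the $3n$ elements $t_i^2,t_i^{-2},b_i t_i$; as all of these lie in $\Lambda_n$ and each $t_i^2$ is a unit there, we have $R\subset\Lambda_n$ and hence $\mu(g_i)=\begin{bmatrix} t_i^2&0\\ b_i t_i&1\end{bmatrix}\in T(\Lambda_n)$.

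For the second stage I would introduce the character $\kappa:F\to\Lambda_n^{*}$, $g_i\mapsto t_i$, which has kernel $F^{(1)}$ because the $t_i$ are multiplicatively independent, and set $\mu_b(g):=\kappa(g)^{-1}\mu(g)$. On generators this gives
$$\kappa(g_i)^{-1}\mu(g_i)=t_i^{-1}\begin{bmatrix} t_i^2&0\\ b_i t_i&1\end{bmatrix}=\begin{bmatrix} t_i&0\\ b_i&t_i^{-1}\end{bmatrix}\in ST(\Lambda_n),$$
so $\mu_b$ is precisely the homomorphism whose existence is asserted. The kernel is computed as in Theorem \ref{main}: every $\mu_b(g)$ has diagonal entries $\kappa(g)$ and $\kappa(g)^{-1}$, so $\ker(\mu_b)\subset\ker(\kappa)=F^{(1)}$; and on $F^{(1)}$ the scalar factor $\kappa$ is trivial, so $\mu_b$ agrees with $\mu$ there, giving $\ker(\mu_b)=\ker(\mu)\cap F^{(1)}=F^{(2)}\cap F^{(1)}=F^{(2)}$.

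I expect no genuine obstacle: well-definedness of the homomorphism is automatic since $F$ is free, and both the linear-independence verification and the kernel bookkeeping reduce to the already-proven Theorem \ref{main}. The one point deserving a moment's care is that I do \emph{not} claim $R=\Lambda_n$ (as happened in Example \ref{basicE}); here it is enough that $R\subset\Lambda_n$, which suffices to place the image of $\mu$ in $T(\Lambda_n)$ and, after twisting by $\kappa$, in $ST(\Lambda_n)$, while the kernel assertion of Lemma \ref{magnusE} is insensitive to the precise ring $R$.
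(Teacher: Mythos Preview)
Your proposal is correct and follows precisely the approach the paper indicates: it reproduces the proof of Theorem~\ref{main} with $y_i=b_i t_i$ in place of $y_i=t_i$, the only new verification being the linear independence of $\{b_1 t_1,\dots,b_n t_n\}$ over $\Q(t_1^2,\dots,t_n^2)$, which you reduce correctly to Example~\ref{basicE}. Your care in observing that one only needs $R\subset\Lambda_n$ rather than $R=\Lambda_n$ is apt, since for general $b_i$ the equality can fail.
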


\section{Word maps and unipotent elements}
\label{wrd}

\begin{Lemma}\label{Lw1}
Let $w$ be an element of $F^{(1)}$ that does not belong to $F^{(2)}$. Then there exists a nonzero Laurent polynomial
$$\LA_w=\LA_w(t_1, \dots t_n)\in \Z[t_1, t_1^{-1}, \dots , t_n, t_n^{-1}]=\Lambda_n$$
such that
$$\mu_1(w)=
\begin{bmatrix}
1 &0\\
\LA_w& 1
\end{bmatrix}.$$
\end{Lemma}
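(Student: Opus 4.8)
The plan is to read the statement directly off the structural description of $\mu_1$ obtained during the proof of \thmref{main}, so that essentially no new computation is required. The two ingredients are the shape of the matrices $\mu_1(g)$ and the precise kernel of $\mu_1$.

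First I would recall the homomorphism $\kappa: F \to \Lambda_n^{*}$, $g_i \mapsto t_i$, introduced in the proof of \thmref{main}, together with the two facts established there: that $\ker(\kappa)=F^{(1)}$, and that every $\mu_1(g)$ has the form
$$\mu_1(g)=\begin{bmatrix}\kappa(g)&0\\ *&\kappa(g)^{-1}\end{bmatrix}\in ST(\Lambda_n).$$
Now, since $w\in F^{(1)}=\ker(\kappa)$, we have $\kappa(w)=1$, so both diagonal entries of $\mu_1(w)$ equal $1$. Because $\mu_1(w)$ lies in $ST(\Lambda_n)$, it is lower triangular with unit diagonal, and hence
$$\mu_1(w)=\begin{bmatrix}1&0\\ \LA_w&1\end{bmatrix}$$
for a uniquely determined entry $\LA_w\in\Lambda_n$. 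This already gives the asserted shape; it remains only to check that $\LA_w$ is not the zero polynomial.

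For that I would invoke the other half of \thmref{main}, namely $\ker(\mu_1)=F^{(2)}$. Since by hypothesis $w\notin F^{(2)}$, the element $w$ is not in $\ker(\mu_1)$, so $\mu_1(w)\ne\I$. Comparing with the display above, the only entry in which $\mu_1(w)$ can differ from the identity is the lower-left one, so $\LA_w\ne 0$, as required.

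There is no real obstacle here: the lemma is an immediate consequence of \thmref{main}, and the only point to be careful about is to use \emph{both} hypotheses — membership in $F^{(1)}$ forces the diagonal of $\mu_1(w)$ to be trivial, while non-membership in $F^{(2)}$ forces the off-diagonal Laurent polynomial $\LA_w$ to be nonzero. (One may note in passing that $\LA_w$ is precisely the image under the embedding $F/F^{(2)}\hookrightarrow ST(\Lambda_n)$ of the class of $w$, which is why it determines $w$ modulo $F^{(2)}$; this is not needed for the statement but explains the notation $\LA_w$.)
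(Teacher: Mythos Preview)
Your proof is correct and follows essentially the same approach as the paper: both use the shape of $\mu_1(g)$ established in the proof of \thmref{main} together with $\ker(\kappa)=F^{(1)}$ to get the unipotent form, and then invoke $\ker(\mu_1)=F^{(2)}$ to conclude $\LA_w\ne 0$. There is no substantive difference between the two arguments.
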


\begin{proof}
We have seen in the course of the proof of Theorem \ref{main} that for all $g \in F$
$$\mu_1(g)=
\begin{bmatrix}
\kappa(g)&0\\
*& \kappa(g)^{-1}
\end{bmatrix} \in ST(\Lambda_n).$$
This means that there exists a Laurent polynomial $\LA_g \in \Lambda_n$ such that
$$
\mu_1(g)=
\begin{bmatrix}
\kappa(g)&0\\
\LA_g& \kappa(g)^{-1}
\end{bmatrix}.
$$
We have also seen that if $g\in F^{(1)}$ then $\kappa(g)=1$.
 Since $w\in F^{(1)}$,
$$
\mu_1(w)=
\begin{bmatrix}
1&0\\
\LA_w& 1
\end{bmatrix}
$$
with $\LA_w \in \Lambda_n$. On the other hand, by Theorem \ref{main}, $\ker(\mu_1)=F^{(2)}$.
Since $w\not\in F^{(2)}$, $\LA_w \ne 0$ in $\Lambda_n$.
\end{proof}

\begin{corollary}
\label{value}
Let $w$ be an element of $F^{(1)}$ that does not belong to $F^{(2)}$.
 Suppose that  ${\mathbf a}=\{a_1, \dots, a_n\}$ is an $n$-tuple of nonzero rational numbers
 such that
 $$c:=\LA_w(a_1, \dots, a_n) \ne 0.$$
  (Since $\LA_w \ne 0$, such an $n$-tuple always exists.) Let us consider the group homomorphism
$$\gamma_{\mathbf a}: F \to ST(\Q)\subset \SL(2,\Q), \ g_i \mapsto
 \begin{bmatrix}
a_i&0\\
1& a_i^{-1}
\end{bmatrix}:=Z_i.
$$
 Then
 $$\gamma_{\mathbf a}(w)=
  \begin{bmatrix}
1&0\\
c& 1
\end{bmatrix}=w(Z_1,\dots,Z_n).$$
is a unipotent matrix that is not the identity matrix.
\end{corollary}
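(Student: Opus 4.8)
The plan is to realize $\gamma_{\mathbf a}$ as a specialization of the universal homomorphism $\mu_1$ of Theorem \ref{main} and then to read off its value on $w$ directly from Lemma \ref{Lw1}. Since every $a_i$ is a \emph{nonzero} rational number, the assignment $t_i \mapsto a_i$, $t_i^{-1}\mapsto a_i^{-1}$ extends to a ring homomorphism
$$\ev_{\mathbf a}: \Lambda_n = \Z[t_1, t_1^{-1}, \dots , t_n, t_n^{-1}] \to \Q,$$
which in turn induces a group homomorphism $(\ev_{\mathbf a})^{*}: ST(\Lambda_n) \to ST(\Q)$. First I would verify that
$$\gamma_{\mathbf a} = (\ev_{\mathbf a})^{*}\circ \mu_1 : F \to ST(\Q).$$
Both sides are group homomorphisms out of the free group $F$, so it suffices to compare them on the generators $g_i$; but applying $(\ev_{\mathbf a})^{*}$ to $\mu_1(g_i)=\left[\begin{smallmatrix} t_i & 0\\ 1 & t_i^{-1}\end{smallmatrix}\right]$ yields exactly $\left[\begin{smallmatrix} a_i & 0\\ 1 & a_i^{-1}\end{smallmatrix}\right]=Z_i=\gamma_{\mathbf a}(g_i)$, so the two homomorphisms coincide.

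Next I would evaluate at $w$. By Lemma \ref{Lw1} we have $\mu_1(w)=\left[\begin{smallmatrix} 1 & 0\\ \LA_w & 1\end{smallmatrix}\right]$ with $\LA_w\in\Lambda_n$ nonzero, and applying $(\ev_{\mathbf a})^{*}$ entrywise gives
$$\gamma_{\mathbf a}(w)=\begin{bmatrix} 1 & 0\\ \LA_w(a_1,\dots,a_n) & 1\end{bmatrix}=\begin{bmatrix} 1 & 0\\ c & 1\end{bmatrix}.$$
Under the hypothesis $c\ne 0$ this matrix is not the identity, and being lower triangular with both diagonal entries equal to $1$ it is unipotent. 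Since $\gamma_{\mathbf a}$ is by construction the word map evaluated at $(Z_1,\dots,Z_n)$, this quantity equals $w(Z_1,\dots,Z_n)$, which is the stated conclusion.

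For the parenthetical existence claim I only need that a nonzero Laurent polynomial over the infinite field $\Q$ does not vanish identically on $(\Q^{*})^n$. I would clear denominators: multiplying $\LA_w$ by a monomial $\prod_i t_i^{N}$ with $N$ large enough to remove all negative exponents \emph{and} to leave a factor of $t_1\cdots t_n$ produces a nonzero honest polynomial, which over the infinite field $\Q$ is nonzero at some rational point; the factor $t_1\cdots t_n$ forces every coordinate of such a point to lie in $\Q^{*}$, and there $\LA_w$ itself is nonzero.

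I do not anticipate a genuine obstacle, since Theorem \ref{main} and Lemma \ref{Lw1} carry all the weight: the entire content is the functoriality of $ST(-)$ under the specialization $t_i\mapsto a_i$, encapsulated in the identity $\gamma_{\mathbf a}=(\ev_{\mathbf a})^{*}\circ\mu_1$. The one point demanding (routine) care is the well-definedness of $\ev_{\mathbf a}$, which is precisely why the $a_i$ are required to be nonzero so that each $t_i^{-1}$ acquires an image in $\Q$.
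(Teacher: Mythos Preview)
Your proof is correct and follows exactly the paper's approach: the paper's one-line argument is precisely that $\gamma_{\mathbf a}$ is the composition of $\mu_1$ with the homomorphism $ST(\Lambda_n)\to ST(\Q)$ induced by $t_i\mapsto a_i$, and you have simply spelled out the details (including the parenthetical existence claim, which the paper leaves implicit).
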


\begin{proof}
One has only to notice that $\gamma_{\mathbf a}$ is the composition of $\mu_1$ and the homomorphism $ST(\Lambda_n) \to ST(\Q)$ induced by the ring homomorphism
$$\Lambda_n \to \Q, \ t_i \mapsto a_i, t_i^{-1} \mapsto a_i^{-1}.$$
\end{proof}

\begin{corollary}
\label{uni}
Let $w$ be an element of $F^{(1)}$ that does not belong to $F^{(2)}$.
Let $K$ be a field of characteristic zero. Then for every unipotent matrix
$X \in \SL(2,K)$ there exists  a group homomorphism $\psi_{w,X}:F \to \SL(2,K)$ such that
$$\psi_{w,X}(w)=X.$$  In other words, there exist $Z_1,\dots,Z_n\in \SL(2,K)$ such that $w(Z_1,\dots,Z_n)=X.$
\end{corollary}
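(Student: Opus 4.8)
The statement asserts that a homomorphism $F\to\SL(2,K)$ — equivalently, an $n$-tuple $(Z_1,\dots,Z_n)\in\SL(2,K)^n$ — can be chosen so that $w$ is sent to a prescribed unipotent matrix $X$. The plan is to extract one explicit nontrivial unipotent value of $w$ from \corref{value} and then transport it to $X$ by conjugation, using that conjugation commutes with evaluation of a word. First I would dispose of the case $X=\mathrm{id}$: the trivial homomorphism $g_i\mapsto \mathrm{id}$ sends every element of $F$, in particular $w$, to $\mathrm{id}$, so from now on I may assume $X\ne\mathrm{id}$.

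For $X\ne\mathrm{id}$ the plan is to apply \corref{value}. Choosing an $n$-tuple $\mathbf a=(a_1,\dots,a_n)$ of nonzero rationals with $c:=\LA_w(a_1,\dots,a_n)\ne 0$ — possible because $\LA_w\ne 0$ by \lemref{Lw1} — the homomorphism $\gamma_{\mathbf a}:F\to\SL(2,\Q)\subset\SL(2,K)$ satisfies
$$\gamma_{\mathbf a}(w)=\begin{bmatrix} 1 & 0 \\ c & 1 \end{bmatrix}=:U_c,$$
a non-identity unipotent matrix in $\SL(2,K)$. Thus $w$ already attains one nontrivial unipotent value, and it remains only to reach the arbitrary prescribed $X$.

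The final step is a conjugation argument. Since $\fchar(K)=0$, any non-identity unipotent $2\times 2$ matrix has minimal polynomial $(\lambda-1)^2$ and hence is similar over $K$ to the single Jordan block; in particular $X$ and $U_c$ lie in one $\GL(2,K)$-conjugacy class, so there is $v\in\GL(2,K)$ with $vU_cv^{-1}=X$. I would then set $\psi_{w,X}(g_i):=v\,\gamma_{\mathbf a}(g_i)\,v^{-1}$ for each generator. Conjugation by $v\in\GL(2,K)$ is a group automorphism that preserves determinants, so it carries $\SL(2,K)$ into itself; hence $\psi_{w,X}$ is a well-defined homomorphism $F\to\SL(2,K)$, and because conjugation commutes with evaluation of a word, $\psi_{w,X}(w)=v\,\gamma_{\mathbf a}(w)\,v^{-1}=vU_cv^{-1}=X$.

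The one point that genuinely requires care — and which I regard as the main (if modest) obstacle — is that $K$ need not be algebraically closed, so the $\SL(2,K)$-conjugacy classes of unipotents are strictly finer than the single $\GL(2,K)$-class: they are indexed by $K^{*}/(K^{*})^2$ via the lower-left entry. Conjugating within $\SL(2,K)$ alone would therefore only reach the unipotents $U_t$ with $t/c\in(K^{*})^2$, which in general is not all of $K^{*}$. Allowing $v\in\GL(2,K)$ removes this restriction while still landing in $\SL(2,K)$, since conjugation preserves the determinant; concretely, conjugating $U_c$ by $\mathrm{diag}(\lambda,1)$ rescales the entry $c$ to $c/\lambda$ and so sweeps out every nonzero lower-left entry as $\lambda$ ranges over $K^{*}$.
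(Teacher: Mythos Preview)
Your proof is correct and follows essentially the same approach as the paper: obtain a nontrivial unipotent value of $w$ from \corref{value}, then conjugate by an element of $\GL(2,K)$ to reach the prescribed $X$. Your additional remarks on why $\GL(2,K)$- rather than $\SL(2,K)$-conjugation is needed, and the explicit treatment of $X=\mathrm{id}$, are welcome elaborations but do not alter the argument.
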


\begin{proof}
We have
$$\Q \subset K, \ \SL(2,\Q)\subset \SL(2,K)\vartriangleleft \GL(2,K).$$
We may assume that $X$ is {\sl not} the identity matrix. Let ${\mathbf a}=\{a_1, \dots, a_n\}$ and $\gamma_{\mathbf a}$ be as in Corollary \ref{value}. Recall that
 $c=\LA_w(a_1, \dots, a_n) \ne 0$. Then there exists a matrix $S \in \GL(2,K)$ such that
$$X= S
\begin{bmatrix}
1&0\\
c& 1
\end{bmatrix}
S^{-1}.$$
Let us consider the group homomorphism
$$\psi_{w,X}:F \to \SL(2,K), \ g \mapsto S \gamma_a(g) S^{-1}.$$
Then $\psi_{w,X}$ sends $w$ to

\begin{equation}\label{explicit}S \gamma_{\mathbf a}(w) S^{-1}= S
\begin{bmatrix}
1&0\\
c& 1
\end{bmatrix}
S^{-1}
=X.\end{equation}
\end{proof}

\begin{corollary}\label{joint} (\thmref {1.3})
Let $w$ be an element of $F^{(1)}$ that does not belong to $F^{(2)}$.
Let $K$ be an algebraically closed  field of characteristic zero. Then the word map $w$
is surjective on $\PSL(2,K).$
\end{corollary}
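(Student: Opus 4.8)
The plan is to deduce surjectivity on $\PSL(2,K)$ by fitting together the two facts already established: Proposition~\ref{semisimple}, which places every element of $\SL(2,K)$ of trace $\neq \pm 2$ into $w_G$, and Corollary~\ref{uni}, which places every unipotent element into $w_G$. Write $\pi: \SL(2,K) \to \PSL(2,K)$ for the canonical projection. Since the word map commutes with the homomorphism $\pi$ and $\pi$ is surjective, the induced word map on $\PSL(2,K)$ has image $\pi(w_G)$; hence it suffices to show that every $\bar z \in \PSL(2,K)$ admits a lift $z \in \SL(2,K)$ lying in $w_G$.

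First I would classify $\bar z$ using that $K$ is algebraically closed: a lift $z$ is either diagonalizable or conjugate to a (possibly negative) unipotent Jordan block, giving three cases. For the identity of $\PSL(2,K)$: since $w \in F^{(1)}=[F,F]$, evaluating the word map at $(id,\dots,id)$ returns $id$, so the identity is hit. For a nontrivial semisimple class $\bar z$: any lift $z$ is diagonalizable with eigenvalues $\lambda,\lambda^{-1}$, and necessarily $\lambda \neq \pm 1$ (otherwise $z=\pm id$ and $\bar z$ would be trivial), so $\tr(z)=\lambda+\lambda^{-1}\neq \pm 2$ and Proposition~\ref{semisimple} yields $z \in w_G$, whence $\bar z=\pi(z)$ is in the image. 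For a nontrivial unipotent class $\bar z$: its two lifts have traces $2$ and $-2$; choosing the lift $z$ with $\tr(z)=2$ (replacing the minus-unipotent lift by its negative, which shares the same image in $\PSL(2,K)$) makes $z$ unipotent, so Corollary~\ref{uni} gives $z \in w_G$ and again $\bar z$ is hit. These three cases exhaust $\PSL(2,K)$, yielding surjectivity.

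The substantive obstacle has in fact already been cleared upstream: it is Corollary~\ref{uni}, resting on the refined Magnus embedding of Theorem~\ref{main} and the nonvanishing of the Laurent polynomial $\LA_w$ from Lemma~\ref{Lw1} (this is exactly where the hypothesis $w\notin F^{(2)}$ enters). Granting that input, the present statement is a short assembly. The only point demanding care is the case analysis itself, namely verifying that each nontrivial $\PSL$-class admits a lift of the correct trace type---trace $\neq \pm 2$ in the semisimple case and trace $2$ in the unipotent case---so that precisely one of the two already-available tools applies in each case; the identity class is then disposed of separately by the commutator nature of $w$.
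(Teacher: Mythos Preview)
Your proposal is correct and follows essentially the same approach as the paper: both combine Proposition~\ref{semisimple} (for elements of trace $\ne\pm2$) with Corollary~\ref{uni} (for unipotents) to conclude that Properties~\ref{properties}~{\bf a} and~{\bf b} hold, which suffices for surjectivity on $\PSL(2,K)$. Your version simply spells out the case analysis on $\PSL(2,K)$-classes more explicitly than the paper's two-line invocation of the earlier framework.
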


\begin{proof} Consider $w$ as a word map on $G=\SL(2,K).$
Due to \corref{uni}  the image  $w_G$ contains all unipotents.
According to \propref{semisimple} the image contains all the
 semisimple elements as well.  Thus, the word map $w$ has the \Propertref{properties} {\bf a} and
  {\bf b}. It follows that it is surjective on  $\PSL(2,K).$
 \end{proof}

\begin{remark} In \cite{ET}  the words from $F^{(1)}\setminus F^{(2)}$ are  proved to be surjective on
$SU(n)$   for an infinite set of integers $n.$ \end{remark}

\begin{theorem}
 Let $w$ be an element of $F^{(1)}$ that does not belong to $F^{(2)}$. Let  $G$ be a connected semisimple linear algebraic group over a field
 $K$   of characteristic zero. If $u \in G(K)$ is a unipotent element
 then there exists  a group homomorphism $F \to G(K)$ such that the image of $w$ coincides with $u$. In other words, there exist $Z_1,\dots,Z_n\in G(K)$ such that $w(Z_1,\dots,Z_n)=u.$
 \end{theorem}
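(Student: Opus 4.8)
The plan is to reduce the statement for a general connected semisimple $G$ to the case $G=\SL(2,K)$, which is already settled by Corollary \ref{uni}. The bridge between the two is a homomorphism of algebraic groups $\SL_2\to G$, defined over $K$, that carries a standard unipotent element of $\SL_2$ onto the prescribed $u$; such a homomorphism exists in characteristic zero by the Jacobson--Morozov theorem. First I would dispose of the trivial case $u=\I$: since $w\in F^{(1)}$ is a product of commutators, sending every $g_i$ to $\I$ yields $w\mapsto\I$, so from now on I assume $u\ne\I$.

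Write $\mathfrak g:=\Lie(G)$, a semisimple Lie algebra over $K$ because $G$ is connected semisimple and $\fchar(K)=0$. For a nontrivial unipotent $u\in G(K)$ the element
$$X:=\log(u)=\sum_{k\ge 1}(-1)^{k+1}(u-\I)^k/k$$
is a well-defined nonzero nilpotent element of $\mathfrak g$ (the sum is finite since $u-\I$ is nilpotent, and the coefficients lie in $K\supset\Q$), with $\exp(X)=u$. By the Jacobson--Morozov theorem, applicable because $\fchar(K)=0$ and $\mathfrak g$ is semisimple, $X$ can be completed to an $\sll_2$-triple $(X,H,Y)$ in $\mathfrak g$; equivalently there is a homomorphism of Lie algebras $\sll_2\to\mathfrak g$ over $K$ sending the standard nilpotent $\begin{bmatrix}0&1\\0&0\end{bmatrix}$ to $X$. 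Since $\SL_2$ is simply connected, this Lie algebra homomorphism integrates to a homomorphism of algebraic groups $\phi:\SL_2\to G$ defined over $K$, and on one-parameter unipotent subgroups $\phi$ exponentiates the Lie algebra map, so that
$$\phi\!\left(\begin{bmatrix}1&1\\0&1\end{bmatrix}\right)=\exp(X)=u.$$

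It remains to combine this with the $\SL_2$ result. Put $U=\begin{bmatrix}1&1\\0&1\end{bmatrix}\in\SL(2,K)$, a unipotent matrix. By Corollary \ref{uni} there is a group homomorphism $\psi_{w,U}:F\to\SL(2,K)$ with $\psi_{w,U}(w)=U$. Composing, the homomorphism $\phi\circ\psi_{w,U}:F\to G(K)$ sends $w$ to $\phi(U)=u$, which is exactly the assertion, with $Z_i:=\phi(\psi_{w,U}(g_i))$.

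I expect the main obstacle to be the descent to the ground field $K$ rather than to its algebraic closure: one must ensure both that the $\sll_2$-triple can be taken with coordinates in $\mathfrak g$ over $K$ --- this is where the Jacobson--Morozov construction over an arbitrary field of characteristic zero, and not merely over an algebraically closed field, is required --- and that the resulting Lie algebra homomorphism integrates to an algebraic group homomorphism $\SL_2\to G$ defined over $K$, which relies on $\SL_2$ being simply connected. Once these two points are secured, the identity $\phi(U)=u$ and the composition are purely formal.
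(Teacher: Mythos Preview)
Your argument is correct, and parallels the paper's closely, but the bridge you build from a small matrix group to $G$ is genuinely different from the one the authors use.

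The paper does not invoke Jacobson--Morozov or integrate an $\sll_2$-homomorphism. Instead it factors through the \emph{solvable} group $ST(K)$ of lower-triangular unimodular matrices: it goes back to Corollary~\ref{value} (rather than Corollary~\ref{uni}) to get $\gamma_{\mathbf a}:F\to ST(K)$ with $\gamma_{\mathbf a}(w)=\bigl(\begin{smallmatrix}1&0\\c&1\end{smallmatrix}\bigr)$, and then proves a separate Lemma (\ref{GJM}) producing a group homomorphism $\phi:ST(K)\to G(K)$ that carries this matrix to $u$. The construction of $\phi$ uses only two ingredients: the one-parameter unipotent subgroup $t\mapsto\exp(tx)$ with $x=\log(u)$, and a cocharacter $\Phi:\BG_m\to G$ satisfying $\mathrm{Ad}(\Phi(\beta))(x)=\beta^2 x$, whose existence over $K$ is taken from McNinch's work on optimal $\SL(2)$-homomorphisms. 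These two pieces are glued via the semidirect-product structure of $ST(K)$.

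What each approach buys: your route is the textbook one---Jacobson--Morozov over any field of characteristic zero, then integration from the simply connected $\SL_2$---and it lets you recycle Corollary~\ref{uni} verbatim. The paper's route is more economical in that it never needs a full $\sll_2$-triple or a homomorphism defined on all of $\SL_2$; a cocharacter normalizing the nilpotent with weight $2$ suffices, which is strictly less structure. On the other hand, the cocharacter input is itself a consequence (indeed a refinement) of Jacobson--Morozov, so the two arguments are close cousins. Your caveat about descent to $K$ is well placed and applies equally to both proofs; the paper handles it by citing McNinch for the $K$-rational cocharacter, while you rely on the $K$-rational Jacobson--Morozov statement and the standard integration of Lie algebra maps from simply connected groups in characteristic zero.
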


 \begin{proof}
Let ${\mathbf a}=\{a_1, \dots, a_n\}$, $\gamma_{\mathbf a}$ and
 $c=\LA_w(a_1, \dots, a_n) \ne 0$ be as in  Corollary \ref{value}.
By Lemma \ref{GJM} below, there exists
 a group  homomorphism $\phi: ST(K) \to G(K)$
 such that $u=\phi(\bu_1)$ for
 $$\bu_1=\begin{bmatrix}
1&0\\
c& 1
\end{bmatrix}\in ST(K).$$
Now the result follows from Corollary \ref{value}: the desired homomorphism is the composition
$$\phi \ \gamma_{\mathbf a}: F \to ST(K) \to G(K).$$
 \end{proof}

\begin{Lemma}
\label{GJM}
Let $K$ be a field of characteristic zero,  $G$ a connected semisimple linear algebraic $K$-group of positive dimension,
 and $u$ a unipotent element of $G(K)$.
Then for every nonzero $c\in K$ there is a group homomorphism
$\phi: ST(K) \to G(K)$ such that $u$ is the image of
$$\bu_1=
 \begin{bmatrix}
1&0\\
c& 1
\end{bmatrix}
\in ST(K).
$$
\end{Lemma}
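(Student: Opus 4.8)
The plan is to obtain $\phi$ by restricting to $ST$ a homomorphism of algebraic groups $\SL(2,\cdot)\to G$ coming from a Jacobson--Morozov $\sll_2$-triple. The key observation is that $ST$ is exactly the group of $K$-points of the lower-triangular Borel subgroup $B^{-}\subset\SL(2)$, and that
$$\bu_1=\begin{bmatrix}1&0\\c&1\end{bmatrix}=\exp(cF),\qquad F=\begin{bmatrix}0&0\\1&0\end{bmatrix}\in\sll_2.$$
If $u=\I$ I would simply take $\phi$ to be the trivial homomorphism, so from now on assume $u\ne\I$.

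Fixing a faithful representation $G\hookrightarrow\GL(N,\cdot)$, the logarithm $\log u=\sum_{k\ge1}\frac{(-1)^{k+1}}{k}(u-\I)^k$ is a finite sum (since $u-\I$ is nilpotent) and defines a nonzero nilpotent element of $\mathfrak{gl}(N)$. It lies in $\mathfrak g(K)$, where $\mathfrak g=\Lie(G)$, because in characteristic zero the Zariski closure of $\langle u\rangle$ is a one-dimensional unipotent $K$-subgroup of $G$ whose Lie algebra is $K\cdot\log u$. I then set $N:=\tfrac1c\log u$, a nonzero nilpotent in $\mathfrak g(K)$, chosen precisely so that $\exp(cN)=u$.

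Next I would invoke the Jacobson--Morozov theorem, valid over any field of characteristic zero: the nonzero nilpotent $N$ can be completed to an $\sll_2$-triple inside $\mathfrak g(K)$, giving a homomorphism of Lie algebras $\rho:\sll_2\to\mathfrak g$, defined over $K$, with $\rho(F)=N$ (realize $N$ as the lower nilpotent of the triple, using the automorphism of $\sll_2$ interchanging the roles of $E$ and $F$). Since $\SL(2)$ is simply connected and $\fchar(K)=0$, the homomorphism $\rho$ integrates to a unique homomorphism of algebraic $K$-groups $\tilde\rho:\SL(2)\to G$ with differential $\rho$. Put $\phi:=\tilde\rho|_{ST}:ST(K)\to G(K)$. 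Functoriality of the exponential map (again using characteristic zero) then yields
$$\phi(\bu_1)=\tilde\rho(\exp(cF))=\exp\bigl(c\,\rho(F)\bigr)=\exp(cN)=\exp(\log u)=u,$$
as required.

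The main obstacle is not the formal construction but the rationality over a field $K$ that need not be algebraically closed: one must know that the $\sll_2$-triple completing $N$ can be chosen inside $\mathfrak g(K)$, and that the integrating morphism $\tilde\rho$ is defined over $K$. The first point is the rational form of Jacobson--Morozov in characteristic zero; the second follows from the uniqueness of the integrating homomorphism over $\overline{K}$, which forces it to be $\Gal(\overline{K}/K)$-invariant and hence to descend to $K$. A secondary point worth spelling out is why the torus part of $ST$ maps compatibly at all: this is encoded in the semisimple element $h$ of the triple, whose associated cocharacter $\mathbb{G}_m\to G$ scales $N$ by the weight matching the conjugation action of the diagonal torus of $ST$ on its unipotent radical, and it is exactly this compatibility that the passage through $\SL(2)$ renders automatic.
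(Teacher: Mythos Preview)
Your argument is correct and rests on the same underlying $\sll_2$-theory as the paper's, but the execution differs in one notable way. The paper does not integrate a full Jacobson--Morozov triple to a morphism $\SL(2)\to G$; instead it assembles $\phi$ directly on the two factors of the semi-direct product $ST(K)=T_1(K)\ltimes H(K)$. On the unipotent radical it sets $v(\lambda)\mapsto\exp(\tfrac{\lambda}{c}\,x)$ with $x=\log u$, and on the torus it invokes only a cocharacter $\Phi:\BG_m\to G$ satisfying $\mathrm{Ad}(\Phi(\beta))x=\beta^2 x$, whose existence over $K$ is cited from McNinch's work on optimal $\SL(2)$-homomorphisms; it then checks by hand that these two maps are compatible with the conjugation action and appeals to the universal property of semi-direct products. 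Your route through Jacobson--Morozov and integration to the whole of $\SL(2)$ is tidier in that the compatibility on the Borel is automatic and no separate semi-direct-product argument is needed; the paper's route trades that elegance for a lighter input, needing only the associated cocharacter rather than the full triple together with the rational integration theorem. In characteristic zero both inputs are standard, and your Galois-descent remark is an adequate way to handle rationality of the integrated morphism.
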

\begin{proof}

Let us identify  the additive algebraic $K$-group $\BG_a$ with  the closed subgroup  $H$  of all matrices of the form $ v(t)=\begin{bmatrix}
1&0\\
t& 1
\end{bmatrix}
$ in $\SL(2)$. Its Lie subalgebra $\Lie(H)$ is the one-dimensional   $K$-vector subspace
$\Lie(H)=\{\lambda \bx_0 \ |\lambda \in K\}$ of $ \sll_2(K)$ generated by the matrix
$$\bx_0=  \begin{bmatrix}
0&0\\
1& 0
\end{bmatrix}
\subset \sll_2(K).$$
Here we view the $K$-Lie algebra
$\sll_2(K)$ of $2\times 2$ traceless matrices as the Lie algebra of the algebraic $K$-group $\SL(2)$.
 Moreover, $\exp(\lambda \bx_0)=v(\lambda)$ for all $\lambda \in K$.

We may view $G$ as a closed algebraic $K$-subgroup of the matrix group $\GL(N)=\GL(V)$,
 where $V$ is an $N-$dimensional $K$-vector space for a
suitable positive integer $N$. Then
$$u \in G(K)\subset \Aut_{K}(V)=\GL(N,K).$$

Thus  the $K$-Lie algebra $\Lie(G)$ becomes a certain {\sl semisimple} Lie subalgebra of $\End_{K}(V)$.
Here we view  $\End_{K}(V)$
as  the Lie algebra $\Lie(\GL(V))$ of the $K$-algebraic group $\GL(V)$.  As usual, we write
$$\mathrm{Ad}: G(K) \to \Aut_K(\Lie(G))$$
for the adjoint action of $G$. We have
$$\mathrm{Ad}(g)(u)=g u g^{-1}$$
for all
 $$ g \in G(K)\subset \Aut_K(V) \ \mathrm{ and } \ u \in \Lie(G)\subset \End_K(V).$$
Since $u$ is a unipotent element, the linear operator $u-1: V \to V$ is a nilpotent.
 Let us consider  the nilpotent linear operator
$$x=\log(u):= \sum_{i=1}^{\infty}(-1)^{i+1} \frac{(u-1)^i}{i}\in  \End_{K}(V)$$
 (\cite[Sect 7, p. 106]{Borel}, \cite[Sect.23, p. 336]{TaYu})
and the corresponding  homomorphism of algebraic $K$-groups
$$\varphi_u:H \to \GL(V), \ v(t)
 \mapsto \exp(tx)=v(0)+tx+\dots.$$
In particular, since $\bu_1=v(1),$
$$\varphi_u(\bu_1)=u.$$
Clearly, the differential   of $\varphi_u$
$$d\varphi_u: \Lie(H)\to \Lie(\GL(V))=\End_K(V)$$
is  defined as
$$d\varphi_u (\lambda\bx_0)=\lambda x \ \forall \lambda \in K$$
and
 sends $\bx_0$ to $x\in \Lie(\GL(V))$.
Since  $\varphi_u(m)=u^m \in G(K)$ for all integers $m$ and $G$ is closed  in $\GL(V)$ in Zariski topology, the image $\varphi_u(H)$
of $H$ lies in $G$ and therefore one may view $\varphi_u$
as a homomorphism of algebraic $K$-groups
$$\varphi_u: H \to G.$$
This implies that
$$d\varphi_u(\Lie(H)) \subset \Lie(G);$$
in particular,
$x \in \Lie(G)$.

There exists a {\sl cocharacter} 
$$\Phi:\BG_m \to G\subset \GL(V)$$ of $K$-algebraic group $G$ such that for each $\beta \in K^{*}=\BG_m(K)$
$$\mathrm{Ad}(\Phi(\beta))(x)=\beta^2 x$$
(see \cite[Sect. 6, pp. 402--403]{McNinch}. Here $\BG_m$ is the multiplicative algebraic $K$-group.) This means that for all $\lambda \in K$
$$\Phi(\beta)\lambda x \Phi(\beta)^{-1}=\mathrm{Ad}(\Phi(\beta))(\lambda x)=
\lambda\beta^2  x=\beta^2 \lambda x\in \Lie(G)\subset \End_K(V),$$
which
implies that
$$\Phi(\beta)(\exp(\lambda x))\Phi(\beta)^{-1}=\exp\left(\Phi(\beta)\lambda x \Phi(\beta)^{-1}\right)=
\exp(\beta^2 \lambda x).$$
It follows that
$$\Phi(\beta)\left(\exp\left(\frac{\lambda}{c} x\right)\right)\Phi(\beta)^{-1}=\exp\left(\beta^2 \frac{\lambda}{c} x\right).$$
Recall that $ST(K)$ is a {\sl semi-direct product} of its normal subgroup $H(K)$ and the torus
$$T_1(K)=\left\{\begin{bmatrix}
\beta^{-1}&0\\
0& \beta
\end{bmatrix}
, \ \beta \in K^{*}\right\}\subset ST(K).$$
In addition,
$$\begin{bmatrix}
\beta^{-1}&0\\
0& \beta
\end{bmatrix}
\begin{bmatrix}
1&0\\
\lambda& 1
\end{bmatrix}
\begin{bmatrix}
\beta^{-1}&0\\
0& \beta
\end{bmatrix}^{-1}=
\begin{bmatrix}
1&0\\
\beta^2\lambda& 1
\end{bmatrix}
\ \forall \lambda \in K, \beta\in K^{*}.$$
It follows from \cite[Ch. III, Prop. 27 on p. 240]{Bourbaki} that there is a group homomorphism
$$\phi: ST(K) \to G(K)$$
 that  sends each $\begin{bmatrix}
1&0\\
\lambda& 1
\end{bmatrix}$ to $\exp(\frac{\lambda}{c} x)$ and each $\begin{bmatrix}
\beta^{-1}&0\\
0& \beta
\end{bmatrix}$ to $\Phi(\beta)$.
Clearly,
$\phi$ sends $\bu_1=\begin{bmatrix}
1&0\\
c& 1
\end{bmatrix}$ to $\exp(\frac{c}{c} x))=\exp(x)=u.$
\end{proof}

\section{ Words in two letters on  $\PSL(2,\BC)$}\label{section2}

In this section we  consider words in two letters. We
provide the explicit formulas for
$w(x,y),$  where $x,y$ are upper triangular matrices.
This    enables to extract some additional information on the image of  words
in two letters.
.

Consider a word map $w(x,y)=x^{a_1}y^{b_1}\dots x^{a_k}y^{b_k},$
where $a_i\ne 0 $ and $b_i\ne 0 $ {for all }$ i=1,...,k.$
Let  $A(w)=\sum_{i=1}^k a_i$, $B(w)=\sum_{i=1}^k b_i$. Let
 $ w:\tilde G^2\to \tilde G$
 be the induced  word map on $  G=\SL(2,\BC).$

If $A(w)=B(w)=0,$ then  $w\in F^{(1)}=[F,F].$ Since $F^{(1)}$ is a free group generated
by elements $w_{n,m}=[x^n,y^m], \ n\ne 0, \ m\ne 0$ (\cite{Ser}, Chapter 1, \S 1.3), the word $w$ with $A(w)=B(w)=0$
 may be written as a (noncommutative)  product (with $s_i\ne 0$)
 \begin{equation}\label{s1}
w=\prod_{1}^r w_{n_i,m_i}^{s_i}.
\end{equation}

Moreover, the shortest (reduced) representation of this kind is unique.
We denote by $S_w(n,m)$ the number of appearances  of $w_{n,m}$ in
representation \eqref{s1} of $w$ and by $R_w(n,m)$ the sum of exponents at
 all the appearances.    We denote by $Supp(w)$ the set of all pairs $(n,m)$ such that $w_{n,m}$ appears in the product. For example, if $w=w_{1,1}w_{2,2}^5w_{1,1}^{-1},$  then
$$Supp(w)=\{(1,1),(2,2)\}; S_w(1,1)=2, S_w(2,2)=1,$$ $$R_w(1,1)=0, R_w(2,2)=5.$$

The subgroup  $$F^{(2)}=[F^{(1)},F^{(1)}]=\{w\in F^{(1)} |R_w(n,m)=0 \  \text {for all }(n,m)\in Supp(w)\}.$$

\begin{example}\label{engel} The Engel word  $e_n=\underbrace{
[...[x,y],y],...y]}_{n \quad times}$ belongs to $F^{(1)}\setminus F^{(2)}$ (see also \cite{ET}).


Indeed,  the direct computation shows that
\begin{equation}\label{en1}
yw_{n,m}=yx^ny^mx^{-n}y^{-m}=yx^ny^{-1}x^{-n}\cdot x^nyy^mx^{-n}y^{-m}y^{-1}\cdot y=
w_{n,1}^{-1}w_{n,m+1}y,\end{equation}

\begin{equation}\label{en2}
 yw_{n,m}^{-1}=y\cdot y^mx^{n}y^{-m}x^{-n}=y^{(m+1)}x^{n}y^{-(m+1)}x^{-n}\cdot
x^nyx^{-n}y^{-1}\cdot y=w_{n,m+1}^{-1}w_{n,1}y.\end{equation}

 It follows that
\begin{equation}\label{en3}
yw_{1,m}^{s}y^{-1}=(w_{1,1}^{-1}w_{1,m+1})^s.\end{equation}

Let us prove by  induction that  $|R_{e_n}(1,n)|=1,\   S_{e_n}(1,n)=1$ and $ S_{e_n}(r,m)=0$
 if  $r\ne 1$ or $m>n,$  i.e.

\begin{equation}\label{en4}
e_n=(\prod_{1}^s w_{1,m_i}^{s_i})w_{1,n}^\varepsilon(\prod_{1}^t w_{1,k_j}^{t_j})
\end{equation}
for some integers $t\ge 0,  \  s\ge 0,   \ m_i<n,  \   k_j<n,$ and  where $\varepsilon=\pm 1.$

Indeed $e_1=w_{1,1}.$ Assume that the claim is valid for all $k\le n.$
We have $e_{n+1}=e_nye_n^{-1}y^{-1}$.
Using \eqref{en4},  we get
\begin{equation}\label{en5}
e_{n+1}=e_n(\prod_{t}^1 yw_{1,k_j}^{-t_j}y^{-1})yw_{1,n}^{-\varepsilon} y^{-1}(\prod_{s}^1y w_{1,m_i}^{-s_i}y^{-1}).
\end{equation}
Applying \eqref{en3} to every factor of this product, we obtain that $e_{n+1}$ has the needed form.
Thus the claim will remain to be valid for $n+1.$

Since $|R_{e_n}(1,n)|=1, $   \  $e_n\not\in F^{(2)}
.$\end{example}

\bigskip

Let us take
\begin{equation}\label{s4}
x=\begin{pmatrix} \lambda & c\\0 & \frac{1}{\lambda}\end{pmatrix},\end{equation}

\begin{equation}\label{s5}
y=\begin{pmatrix} \mu & d\\0 & \frac{1}{\mu}\end{pmatrix},\end{equation}

Then \begin{equation}\label{s6}
x^n=\begin{pmatrix} \lambda^n & c\cdot h_{|n|}(\lambda)sgn (n)\\0 & \frac{1}{\lambda^n}\end{pmatrix},\end{equation}

\begin{equation}\label{s7}
y^m=\begin{pmatrix} \mu^m & d\cdot h_{|m|}(\mu)sgn (m)\\0 & \frac{1}{\mu^m}\end{pmatrix},\end{equation}

Here $sgn$ is the $signum$ function, and (see \cite{BG}, Lemma 5.2) for $n\ge 1$

\begin{equation}\label{hn}
h_n(\zeta)=\frac{\zeta^{2n}-1}{\zeta^{n-1}(\zeta^{2}-1)}.
\end{equation}.

Note that $h_n( 1)=n.$

Direct computations show that

\begin{equation}\label{tt24}
x^{n}y^{m}=\begin{pmatrix} \lambda^{n}\mu^{m}& d\cdot \lambda^{n}sgn(m)h_{|m|}(\mu)
+c\cdot sgn(n)h_{|n|}(\lambda)\mu^{-m}
\\0 &\lambda^{-n}\mu^{-m} \end{pmatrix}.\end{equation}

\begin{equation}\label{tt25}
x^{-n}y^{-m}=\begin{pmatrix} \lambda^{-n}\mu^{-m}& -d\cdot \lambda^{-n}sgn(m)h_{|m|}(\mu)
-c\cdot sgn(n)h_{|n|}(\lambda)\mu^{m}
\\0 &\lambda^{n}\mu^{m} \end{pmatrix}.\end{equation}

 \begin{equation}\label{wnm}
 w_{n,m}(x,y)=\begin{pmatrix} 1 & f(c,d,n,m)\\0 &1 \end{pmatrix},\end{equation}
 where

  \begin{equation}\label{s10}
 f(c,d,n,m)=ch_{|n|}(\lambda)sgn (n)\lambda^n(1-\mu^{2m})+
 dh_{|m|}(\mu)sgn (m)\mu^m(\lambda^{2n}-1).
\end{equation}
Hence,
 \begin{equation}\label{s21}
w(x,y)=\prod_{1}^r w_{n_i,m_i}^{s_i}(x,y)=\begin{pmatrix} 1 & F_w(c,d,\lambda,\mu)\\0 &1 \end{pmatrix},\end{equation}

where $$F_w(c,d,\lambda,\mu)=\sum_1^r s_i f(c,d,n_i,m_i)=c\Phi_w(\lambda,\mu)+d\Psi_w(\lambda,\mu)$$ and
\begin{equation}\label{s22}
\Phi_w(\lambda,\mu)=\sum_{(\al,\be)\in Supp(w)}  R_w(\al,\be)sgn(\al)(1-\mu^{2\be})
\frac{(\lambda^{2|\al|}-1)\lambda^{\al}}{\lambda^{|\al|-1}(\lambda^{2}-1)},
\end{equation}

\begin{equation}\label{s23}
\Psi_w(\lambda,\mu)=\sum_{(\al,\be)\in Supp(w)}   R_w(\al,\be)sgn(\be)(\lambda^{2\al}-1)
\frac{(\mu^{2|\be|}-1)\mu^{\be}}{\mu^{|\be|-1}(\mu^{2}-1)}.
\end{equation}

(Since the order of factors in $w$ is  not relevant for \eqref{s22} and \eqref{s23} ,
we use  here $\al,\be$ instead of $n_i, m_i$ to simplify the formulas).

\begin{Proposition}\label{unipotent}
Rational functions $\Phi(\lambda,\mu)$ and  $\Psi(\lambda,\mu)$
are  non-zero   linearly   independent   rational functions. \end{Proposition}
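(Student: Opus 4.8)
The plan is to pass from the given rational expressions to Laurent polynomials and to exploit a parity dichotomy in the exponents of $\la$ and $\mu$. For a nonzero integer $\ga$ put
$$p_\ga(\zeta):=\mathrm{sgn}(\ga)\,\frac{(\zeta^{2|\ga|}-1)\,\zeta^{\ga}}{\zeta^{|\ga|-1}(\zeta^{2}-1)},$$
so that, by \eqref{s22} and \eqref{s23},
$$\Phi_w(\la,\mu)=\sum_{(\al,\be)\in\mathrm{Supp}(w)}R_w(\al,\be)\,p_\al(\la)\,(1-\mu^{2\be}),$$
$$\Psi_w(\la,\mu)=\sum_{(\al,\be)\in\mathrm{Supp}(w)}R_w(\al,\be)\,(\la^{2\al}-1)\,p_\be(\mu).$$
A direct simplification of \eqref{hn} gives $p_\ga(\zeta)=\zeta+\zeta^{3}+\dots+\zeta^{2\ga-1}$ when $\ga>0$ and $p_\ga(\zeta)=-(\zeta^{-1}+\zeta^{-3}+\dots+\zeta^{-(2|\ga|-1)})$ when $\ga<0$; in both cases $p_\ga$ is a Laurent polynomial containing only \emph{odd} powers of $\zeta$, whereas $\la^{2\al}-1$ and $1-\mu^{2\be}$ contain only \emph{even} powers. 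Hence every Laurent monomial of $\Phi_w$ has odd $\la$-exponent and even $\mu$-exponent, while every monomial of $\Psi_w$ has even $\la$-exponent and odd $\mu$-exponent.

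Then I would check non-vanishing. The nested odd-exponent supports just computed show that the family $\{p_\ga\}_{\ga\ne 0}$ is $\C$-linearly independent: comparing the largest (resp. smallest) exponent occurring in a hypothetical relation $\sum_\ga c_\ga p_\ga=0$ forces all $c_\ga=0$. Viewing $\Phi_w$ as a Laurent polynomial in $\mu$ over $\C[\la,\la^{-1}]$, for each $\be\ne 0$ the coefficient of $\mu^{2\be}$ is $-\sum_{\al}R_w(\al,\be)\,p_\al(\la)$ (the factors $1$ contribute only to $\mu^{0}$, and distinct $\be$ give distinct exponents $2\be\ne0$). If $\Phi_w\equiv0$ each such coefficient vanishes, and linear independence of the $p_\al$ forces $R_w(\al,\be)=0$ for all $(\al,\be)\in\mathrm{Supp}(w)$; by the description of $F^{(2)}$ recalled just before \exampref{engel} this says $w\in F^{(2)}$, contradicting the hypothesis $w\in F^{(1)}\setminus F^{(2)}$. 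Thus $\Phi_w\ne0$, and the symmetric argument (extracting the coefficient of $\la^{2\al}$ and using linear independence of $\{p_\be(\mu)\}$) gives $\Psi_w\ne0$.

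Finally I would deduce linear independence from the parity dichotomy. The monomial supports of $\Phi_w$ and $\Psi_w$ lie in the two disjoint classes (odd $\la$-exponent, even $\mu$-exponent) and (even $\la$-exponent, odd $\mu$-exponent). Consequently, for constants $s,t\in\C$, the relation $s\Phi_w+t\Psi_w=0$ splits into $s\Phi_w=0$ and $t\Psi_w=0$; since both functions are nonzero, $s=t=0$, so $\Phi_w$ and $\Psi_w$ are linearly independent over $\C$. The only genuine content is the parity computation of $p_\ga$ together with the characterization $F^{(2)}=\{w\in F^{(1)}:R_w\equiv0\}$; once these are in place, non-vanishing and independence follow formally, and I expect the sole delicate point to be the bookkeeping of the Laurent expansion — in particular keeping the constant $\mu^{0}$ (resp. $\la^{0}$) terms from interfering with the extracted coefficients.
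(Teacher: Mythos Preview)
Your argument is correct and follows essentially the same route as the paper: non-vanishing is reduced (via the characterization $F^{(2)}=\{w\in F^{(1)}:R_w\equiv0\}$) to showing that $\Phi_w\equiv0$ forces all $R_w(\al,\be)=0$, and linear independence is obtained from the odd/even parity of the $\la$- and $\mu$-exponents in $\Phi_w$ and $\Psi_w$. The only organizational difference is that the paper peels off the largest $\al$ in $\mathrm{Supp}(w)$ by reading the coefficient of $\la^{2n-1}$ and then inducts on $|\mathrm{Supp}(w)|$, whereas you read the coefficient of $\mu^{2\be}$ for each $\be$ and invoke linear independence of the $p_\al$; these are dual bookkeeping choices and your version avoids the induction.
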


\begin{remark} It is evident from the Magnus Embedding  Theorem that at least one of functions
$\Phi(\lambda,\mu)$ and  $\Psi(\lambda,\mu)$ is not identical zero. It follows from \propref{unipotent} that the same is valid for both of them.\end{remark}

\begin{proof}

\begin{Lemma}\label{fipsi}
If $\Phi_w(\lambda,\mu)\equiv 0$ then $ R_w(\al,\be)=0$ for all
$(\al,\be)\in Supp(w).$
\end{Lemma}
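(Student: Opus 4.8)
The plan is to exploit the fact that each summand in \eqref{s22} is a product of a function of $\la$ alone and a function of $\mu$ alone, and then to separate the two variables. For $\al\neq 0$ set
$$p_\al(\la)=sgn(\al)\,\frac{(\la^{2|\al|}-1)\la^{\al}}{\la^{|\al|-1}(\la^{2}-1)}, \qquad g_\be(\mu)=1-\mu^{2\be},$$
so that \eqref{s22} reads $\Phi_w(\la,\mu)=\sum_{(\al,\be)\in Supp(w)}R_w(\al,\be)\,p_\al(\la)\,g_\be(\mu)$. Collecting the summands according to the (finitely many) distinct values of $\be$ occurring in $Supp(w)$, I would write $\Phi_w(\la,\mu)=\sum_\be P_\be(\la)\,g_\be(\mu)$, where $P_\be(\la)=\sum_{\al:\,(\al,\be)\in Supp(w)}R_w(\al,\be)\,p_\al(\la)$.

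First I would settle the $\mu$-side. Regard the identity $\sum_\be P_\be(\la)\,g_\be(\mu)\equiv 0$ as a Laurent polynomial in $\mu$ with coefficients in the field $\C(\la)$. Since $g_\be(\mu)=1-\mu^{2\be}$ and every $\be$ occurring in the support is nonzero, the monomials $\mu^{2\be}$ for distinct $\be$ are distinct and none of them equals $\mu^0$; hence the coefficient of $\mu^{2\be}$ in the expansion is exactly $-P_\be(\la)$. Therefore $\Phi_w\equiv 0$ forces $P_\be(\la)\equiv 0$ for every $\be$ appearing in $Supp(w)$.

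It remains to prove the linear independence of the family $\{p_\al : \al\neq 0\}$, which is the only computation of substance. Clearing the common denominator $\la^2-1$ and using \eqref{hn}, one checks that $(\la^2-1)p_\al(\la)=\la^{2\al+1}-\la$ when $\al>0$, and $(\la^2-1)p_{-\ga}(\la)=\la^{1-2\ga}-\la$ when $\al=-\ga<0$. Thus in $(\la^2-1)P_\be(\la)$ the terms coming from positive $\al$ produce the distinct monomials $\la^{2\al+1}$ (odd exponents $\geq 3$), the terms coming from negative $\al$ produce the distinct monomials $\la^{1-2\ga}$ (odd exponents $\leq -1$), and every term contributes one copy of $-\la$. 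These three blocks of monomials are pairwise disjoint, so $P_\be\equiv 0$ forces each coefficient $R_w(\al,\be)=0$ separately (the coefficient of $\la$, namely $-\sum_\al R_w(\al,\be)$, then vanishes automatically). Combining this with the previous paragraph yields $R_w(\al,\be)=0$ for all $(\al,\be)\in Supp(w)$.

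I expect the main obstacle to be purely bookkeeping: correctly reducing $p_\al$ to a single Laurent monomial plus the universal $-\la$ term while tracking the sign $sgn(\al)$ through the two cases $\al>0$ and $\al<0$. Once these explicit forms are in hand, both independence statements follow at once from the linear independence of distinct Laurent monomials.
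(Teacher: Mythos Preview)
Your argument is correct. Both you and the paper prove the lemma by extracting coefficients of Laurent monomials, but the executions differ. The paper argues by induction on $|Supp(w)|$: it expands $p_\al(\la)$ as $sgn(\al)\la^{\al-|\al|+1}(1+\la^2+\cdots+\la^{2(|\al|-1)})$, observes that only the extremal value $n=\max\{\al\}$ (or $\min$) contributes to the top-degree monomial $\la^{2|n|-1}$, reads off $\sum_\be R_w(n,\be)(1-\mu^{2\be})\equiv 0$, deduces $R_w(n,\be)=0$ for all $\be$, strips those terms from $w$, and recurses on the shorter support. Your trick of multiplying through by $\la^2-1$ collapses each $p_\al$ to a single distinguishing monomial plus a common $-\la$, so you can read off \emph{all} the $R_w(\al,\be)$ at once and avoid the induction entirely; your separate treatment of the $\mu$-variable via the coefficient of $\mu^{2\be}$ is likewise a clean shortcut compared to the paper, which folds both variables into the same inductive peel. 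The upshot is that your proof is shorter and more transparent, while the paper's induction makes the ``leading term'' intuition explicit; both rest on the same linear-independence-of-monomials principle.
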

\begin{proof}

We use induction by the number $|Supp(w)|$ of elements  in $Supp(w)$  for the word $w.$
If $Supp(w)$ contains only one pair $(\al,\be)$, then there is nothing to prove, because
$$\Phi(\lambda,\mu)=R_w(\al,\be)h_{|\al|}(\lambda)sgn (\al)\lambda^\al(1-\mu^{2\be}).$$
Assume that for words $v$ with $|Supp(v)|=l $ it is proved. Let $w$ be such a word that  $|Supp(w)|=l+1. $

Let $n:=max\{\al \ |(\al,\be)\in Supp(w)\}.$

{\bf Case 1.} $n>0.$

We have

$$\Phi_w(\lambda,\mu)=\sum_{(\al,\be)\in Supp(w)} R_w(\al,\be) sgn(\al)(1-\mu^{2\be})
\frac{(\lambda^{2|\al|}-1)\lambda^{\al}}{\lambda^{|\al|-1}(\lambda^{2}-1)}=$$
$$\sum_{(\al,\be)\in Supp(w)} R_w(\al,\be)sgn(\al)(1-\mu^{2\be})\lambda^{a-|a|+1}(1+\lambda^{2}+\dots+
\lambda^{2(|\al|-1)}).$$

It means that the coefficient of $ \lambda^{2|n|-1}$ in  rational function   $\Phi_w(\lambda,\mu)$
is  $$p(\mu)=\underset{(n,\be)\in Supp(w)} \sum R_w(n,\be)(1-\mu^{2\be}).$$

Hence, if $\Phi_w(\lambda,\mu)\equiv 0,$  then  $p(\mu) \equiv 0,$  and all $R_w(n,\be)=0$  for all
 $\be.$

That means that
$\Phi_w(\lambda,\mu)=\Phi_v(\lambda,\mu),$ where $v$ is such a word that
may be obtained from
$w(x,y)=\prod_{1}^r w_{n_i,m_i}^{s_i}(x,y)$
by taking away every appearance of $w_{n,\be}:$

$$v=\underset{n_i\ne n}{\prod_{1}^r }w_{n_i,m_i}^{s_i}(x,y).$$

But $|Supp(v)|\leq l$ and by induction assumption $ R_v(\al,\be)=0$ for all
$(\al,\be)\in Supp(v).$  Thus Lemma is valid for $w$ in this case.

{\bf Case 2.}  $n<0.$ Let $-n':=min\{\al \ |(\al,\be)\in Supp(w)\}.$  We  proceed as in Case 1 with $-n'$ instead
 of $n:$
the coefficient of $\lambda^{-2n'+1}$ is $q(\mu)=\underset{(-n',\be)\in Supp(w)} \sum R_w(-n',\be)(1-\mu^{2\be}).$ If $\Phi_w(\lambda,\mu)\equiv 0,$  then  $q(\mu) \equiv 0,$  and all $R_w(-n',\be)=0$  for all
 $\be.$  Once more, we may replace $w$ by a word $v$ with  $|Supp(v)|\leq l.$
\end{proof}

  Clearly, the similar statement is valid for $\Psi_w(\lambda,\mu).$

The functions $\Phi$ and $\Psi$ are linearly independent, because $\Phi$
is  odd with respect to $\lambda$ and even with respect to $\mu,$
while  $\Psi$   has opposite properties.

\end{proof}

\begin{Proposition}\label{-id} Assume that the word $w\in F^{(1)}\setminus F^{(2)}$
and  that $\Phi_w(1,i)\ne 0 ,$ where $i^2=-1.$ Then $-id\in w_G, $  where $G=\SL(2,\BC).$\end{Proposition}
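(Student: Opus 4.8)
The plan is to produce explicit matrices $x,y\in\SL(2,\BC)$ with $w(x,y)=-id$ by taking $x$ in the diagonal torus and $y$ the standard Weyl rotation. Set
$$y=\begin{pmatrix}0&1\\-1&0\end{pmatrix},\qquad x=\begin{pmatrix}\lambda&0\\0&\lambda^{-1}\end{pmatrix},$$
with $\lambda\in\BC^{*}$ to be chosen. Then $y^2=-id$ is central, and conjugation by $y$ inverts the torus: $y\,x^{m}\,y^{-1}=\begin{pmatrix}\lambda^{-m}&0\\0&\lambda^{m}\end{pmatrix}$ for all $m$. The first step is the commutator identity
$$[x^{n},y]=x^{n}\,(y\,x^{-n}\,y^{-1})=x^{n}\cdot x^{n}=x^{2n},$$
valid for every integer $n$, since $y\,x^{-n}\,y^{-1}=x^{n}$.

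Next I would use the centrality of $y^2=-id$ to collapse the whole word. Writing $w=\prod_{i=1}^{r}w_{n_i,m_i}^{s_i}$ as in \eqref{s1} and substituting the chosen $x,y$, each factor $w_{n_i,m_i}=[x^{n_i},y^{m_i}]$ becomes the identity when $m_i$ is even (then $y^{m_i}=\pm id$ is central), while for $m_i$ odd one has $y^{m_i}=\pm y$ and the central sign cancels in the commutator, so $w_{n_i,m_i}=[x^{n_i},y]=x^{2n_i}$. All surviving factors are powers of the single diagonal matrix $x$, hence commute, and the exponents simply add:
$$w(x,y)=x^{2W}=\begin{pmatrix}\lambda^{2W}&0\\0&\lambda^{-2W}\end{pmatrix},\qquad W:=\sum_{\substack{(\al,\be)\in Supp(w)\\ \be\ \mathrm{odd}}}\al\,R_w(\al,\be).$$

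It then remains to identify $W$ with the hypothesis. Evaluating \eqref{s22} at $\lambda=1$ (where $h_{|\al|}(1)=|\al|$ and $\lambda^{\al}=1$) and at $\mu=i$ (where $1-\mu^{2\be}=1-(-1)^{\be}$ equals $2$ for odd $\be$ and $0$ for even $\be$) gives $\Phi_w(1,i)=2W$. Thus the assumption $\Phi_w(1,i)\ne0$ says precisely that the integer $W$ is nonzero. Since $\BC$ is algebraically closed and $2W\ne0$, I can choose $\lambda$ with $\lambda^{2W}=-1$; for this $\lambda$ one gets $w(x,y)=\begin{pmatrix}\lambda^{2W}&0\\0&\lambda^{-2W}\end{pmatrix}=-id$, so $-id\in w_G$.

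The only delicate point is the collapsing step: one must check that, inside the ordered product, replacing each $y^{m_i}$ by $\pm id$ or $\pm y$ is legitimate and that the leftover central scalars cancel commutator by commutator, so that both the ordering of the factors and all even-$\be$ terms are irrelevant. Once this is done the computation is immediate, because everything lands in the abelian torus. I also note that choosing the rotation $y$ rather than an upper-triangular order-four element is essential: upper-triangular substitutions force $w(x,y)$ to be unipotent (as in \eqref{s21}) and can never yield $-id$.
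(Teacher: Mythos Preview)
Your proof is correct and follows essentially the same route as the paper: both substitute the diagonal matrix $x=\mathrm{diag}(\lambda,\lambda^{-1})$ and the Weyl rotation $y=\begin{pmatrix}0&1\\-1&0\end{pmatrix}$, observe that each $w_{n,m}(x,y)$ collapses to $x^{2n}$ for odd $m$ and to $id$ for even $m$, identify the resulting exponent $2W$ with $\Phi_w(1,i)$ via \eqref{s222}, and then pick $\lambda$ with $\lambda^{2W}=-1$. Your write-up is slightly more explicit about why the central scalars $\pm id$ coming from $y^{m}$ cancel inside each commutator, but the argument is the same.
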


\begin{proof} Assume that  $\Phi(1,i)\ne 0 .$  From \eqref{s22} we get:

\begin{equation}\label{s222}
\Phi_w(1,i)=\sum_{(\al,\be)\in Supp(w), \be \  odd}  2R_w(\al,\be)\al.
\end{equation}

Take $$x=\begin{pmatrix}a & 0 \\ 0 & a^{-1}\end{pmatrix}$$
$$y=\begin{pmatrix}0 & 1 \\ -1 & 0\end{pmatrix}$$

Then $$[x,y]=\begin{pmatrix}a^2 & 0 \\ 0 & a^{-2}\end{pmatrix}$$
Thus, if

$$w=\prod_{1}^r w_{n_j,m_j}^{s_j},$$

then $$w(x,y)=\prod_{m_j \ odd}\begin{pmatrix}a^{2n_js_j} & 0 \\ 0 & a^{-2n_js_j}\end{pmatrix}=\begin{pmatrix}a^{N} & 0 \\ 0 & a^{-N}\end{pmatrix},$$

where $N=2\sum\limits_{m_j \ odd}n_js_j=\Phi_w(1,i)\ne 0.$

Choose $a$ such that $a^N=-1.$  Then $w(x,y)=-id.$\end{proof}

\begin{remark} The case $\Psi(i,1)\ne 0$  may be treated in the similar way, one should  only exchange roles of $x$ and $y.$\end{remark}

\begin{remark}   Let
$$w=\prod_{1}^r w_{n_j,m_j}^{s_j},$$
let $gcd(m_j)=k=2^ds, \ s$ odd.  Put $\mu_j=\frac{m_j}{k}$ and
$$u=\prod_{1}^r w_{n_j,\mu_j}^{s_j}.$$  Note that some of $\mu_j$ are  odd.
Let $z\in \SL(2,\BC)$ be such that
$$z^k=y=\begin{pmatrix}0 & 1 \\ -1 & 0\end{pmatrix}.$$
Then  $w(x,z)=u(x,y),$ hence, if  $\Phi_u(1,i)\ne 0 ,$  then $-id \in w_G.$\end{remark}

\section {Surjectivity on $\SL(2,\BC)$}\label{section3}

We keep the notation of \secref{section2}.

\begin{Lemma}\label{Ane0} Assume that $w=x^{a_1}y^{b_1}\dots x^{a_k}y^{b_k}, $  \
 \ $a_i\ne 0,  \  b_i\ne 0,   \ i=1,...,k$
$A=\sum a_i\ne 0$ or $B=\sum b_i\ne 0$ and  $x, y $
are  defined by \eqref{s4}, \eqref{s5} respectively.
Then
\begin{equation}\label{r21}
w(x,y)=\begin{pmatrix} \lambda^A\mu^B & \tilde F_w(c,d,\lambda,\mu)\\0 & \lambda^{-A}\mu^{-B}\end{pmatrix},\end{equation}

where $$ \tilde F_w(c,d,\lambda,\mu)=c \tilde\Phi_w(\lambda,\mu)+d \tilde \Psi_w(\lambda,\mu)$$ and
\begin{equation}\label{r22}
\tilde \Phi_w(\lambda,\mu)=\sum_{1}^k sgn(a_i)h_{|a_i|}(\lambda)\frac{\lambda^{\sum_{j<i}a_j}
\mu^{\sum_{j<i}b_j}}{\lambda^{\sum_{j>i}a_j}
\mu^{\sum_{j\ge i}b_j}},\end{equation}

\begin{equation}\label{r23}
\tilde \Psi_w(\lambda,\mu)=\sum_{1}^k sgn(b_i)h_{|b_i|}(\mu)\frac{\lambda^{\sum_{j\le i}a_j}
\mu^{\sum_{j<i}b_j}}{\lambda^{\sum_{j>i}a_j}
\mu^{\sum_{j> i}b_j}}.\end{equation}\end{Lemma}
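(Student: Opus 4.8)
The plan is to reduce the statement to one bookkeeping identity for products of upper triangular unimodular matrices and then substitute the explicit powers computed in \eqref{s6} and \eqref{s7}.

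First I would record the elementary fact that the matrices of shape $\begin{pmatrix} p & q \\ 0 & p^{-1}\end{pmatrix}$ with $p \ne 0$ form a group (the Borel subgroup of $\SL(2)$), so any product of the factors $x^{a_i}$ and $y^{b_i}$ is again upper triangular and unimodular. The assignment sending such a matrix to its $(1,1)$ entry $p$ is a homomorphism onto $\BG_m$; applied to $w(x,y)$ it produces the diagonal $(\lambda^A\mu^B,\lambda^{-A}\mu^{-B})$, since by \eqref{s6}, \eqref{s7} each $x^{a_i}$ contributes $\lambda^{a_i}$ and each $y^{b_i}$ contributes $\mu^{b_i}$. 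This already gives the diagonal of \eqref{r21}. (The hypothesis $A\ne 0$ or $B\ne 0$ only serves to separate this regime from the commutator case of \secref{section2}; it plays no role in the computation itself.)

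Next I would establish, by a straightforward induction on the number $N$ of factors, the closed formula for the off-diagonal entry of a product
$$M_1\cdots M_N=\begin{pmatrix} \prod_l p_l & \displaystyle\sum_{i=1}^{N}\Big(\prod_{j<i}p_j\Big)q_i\Big(\prod_{i<j\le N}p_j^{-1}\Big)\\ 0 & \prod_l p_l^{-1}\end{pmatrix},\qquad M_i=\begin{pmatrix} p_i & q_i\\ 0 & p_i^{-1}\end{pmatrix}.$$
The inductive step is immediate: multiplying the product of the first $N-1$ factors, with diagonal $P$ and off-diagonal $Q$, on the right by $M_N$ yields new off-diagonal entry $Pq_N+Qp_N^{-1}$, which is exactly the $i=N$ term plus $p_N^{-1}$ times the previous sum.

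Finally I would specialize this formula to the $2k$ factors $x^{a_1},y^{b_1},\dots,x^{a_k},y^{b_k}$, where the $x^{a_i}$ carry $q=c\,sgn(a_i)h_{|a_i|}(\lambda)$ and the $y^{b_i}$ carry $q=d\,sgn(b_i)h_{|b_i|}(\mu)$. The off-diagonal entry then splits into a $c$-part from the $x$-factors and a $d$-part from the $y$-factors, giving $\tilde F_w=c\tilde\Phi_w+d\tilde\Psi_w$. For each $x^{a_i}$ the product of diagonal entries of the preceding factors is $\lambda^{\sum_{j<i}a_j}\mu^{\sum_{j<i}b_j}$ and the product of inverse diagonal entries of the following factors (which begin with $y^{b_i}$) is $\lambda^{-\sum_{j>i}a_j}\mu^{-\sum_{j\ge i}b_j}$, reproducing \eqref{r22}; the analogous count for each $y^{b_i}$, where the immediately preceding $x^{a_i}$ is now part of the ``before'' block (so the $x$-exponents run over $j\le i$), reproduces \eqref{r23}. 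The only genuine work is this indexing of the exponent sums, and that is precisely the step where care is needed to get the ranges $j<i$, $j\le i$, $j>i$, $j\ge i$ right; everything else is formal.
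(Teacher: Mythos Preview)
Your argument is correct and is essentially the same induction as the paper's, only packaged a bit differently: the paper inducts on $k$, multiplying $u=x^{a_1}y^{b_1}\cdots x^{a_{k-1}}y^{b_{k-1}}$ on the right by the block $x^{a_k}y^{b_k}$ via \eqref{tt24}, whereas you first isolate the general identity for the $(1,2)$-entry of a product of $N$ matrices $\begin{pmatrix}p_i&q_i\\0&p_i^{-1}\end{pmatrix}$ and then specialize to the $2k$ factors. Your observation that the hypothesis $A\ne 0$ or $B\ne 0$ is not used in the computation is also accurate.
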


\begin{proof}
We use induction on  the complexity $k$ of the word $w.$
Using \eqref{tt24},  we get

\begin{equation}\label{r24}
x^{a_1}y^{b_1}=\begin{pmatrix} \lambda^{a_1}\mu^{b_1}& d\cdot \lambda^{a_1}sgn(b_1)h_{|b_1|}(\mu)
+c\cdot sgn(a_1)h_{|a_1|}(\lambda)\mu^{-b_1}
\\0 &\lambda^{-a_1}\mu^{-b_1} \end{pmatrix}.\end{equation}

 Thus for $k=1$ the Lemma is valid. Assume that it is valid for $k'<k.$
Let $u=x^{a_1}y^{b_1}\dots x^{a_{k-1}}y^{b_{k-1}}$ and $w=ux^{a_k}y^{b_k}.$

By induction assumption,
$$u(x,y)=\begin{pmatrix} \lambda^{A-a_k}\mu^{B-b_k} &\tilde  F_u(c,d,\lambda,\mu)\\0 & \lambda^{-A+a_k}\mu^{-B+b_k}\end{pmatrix}.$$

From \eqref{tt24} we get
$$x^{a_k}y^{b_k}=\begin{pmatrix} \lambda^{a_k}\mu^{b_k}& d\cdot \lambda^{a_k}sgn(b_k)h_{|b_k|}(\mu)
+c\cdot sgn(a_k)h_{|a_k|}(\lambda)\mu^{-b_k}
\\0 &\lambda^{-a_k}\mu^{-b_k} \end{pmatrix}.$$

Multiplying matrices $u$ and $x^{a_k}y^{b_k}$
we get
$$\tilde F_w(c,d,\lambda,\mu)= \lambda^{A-a_k}\mu^{B-b_k} (d\cdot \lambda^{a_k}sgn(b_k)h_{|b_k|}(\mu)$$
$$+c\cdot sgn(a_k)h_{|a_k|}(\lambda)\mu^{-b_k} )+ \tilde   F_u(c,d,\lambda,\mu)\lambda^{-a_k}\mu^{-b_k}.$$

Thus,  the   induction assumption implies  that
$$\tilde \Phi_w(\lambda,\mu)=sgn(a_k)h_{|a_k|}(\lambda)\mu^{-b_k}\lambda^{A-a_k}\mu^{B-b_k}
+\sum_{1}^{k-1} sgn(a_i)h_{|a_i|}(\lambda)\frac{\lambda^{\sum_{j<i}a_j}
\mu^{\sum_{j<i}b_j}}{\lambda^{\sum_{j=i+1}^{k}a_j}
\mu^{\sum_{j=i}^{k}b_j}}$$

$$=\sum_{1}^k sgn(a_i)h_{|a_i|}(\lambda)\frac{\lambda^{\sum_{j<i}a_j}
\mu^{\sum_{j<i}b_j}}{\lambda^{\sum_{j>i}a_j}
\mu^{\sum_{j\geq i}b_j}}.$$

$$\tilde \Psi_w(\lambda,\mu)= sgn(b_k)h_{|b_k|}(\mu)\lambda^{a_k}\lambda^{A-a_k}\mu^{B-b_k}
+\sum_{1}^{k-1} sgn(b_i)h_{|b_i|}(\mu)\frac{\lambda^{\sum_{j\le i}a_j}
\mu^{\sum_{j<i}b_j}}{\lambda^{\sum_{j=i+1}^{k}a_j}
\mu^{\sum_{j=i+1}^{k}b_j}}$$

$$=\sum_{1}^k sgn(a_i)h_{|a_i|}(\lambda)\frac{\lambda^{\sum_{j\le i}a_j}
\mu^{\sum_{j<i}b_j}}{\lambda^{\sum_{j>i}a_j}
\mu^{\sum_{j> i}b_j}}.$$

\end{proof}

Denote: $$A_i=\sum_{j\le i}a_i;   \ B_i=\sum_{j<i}b_i,$$ and  let    $C$ be a curve
 $$C=\{
\lambda^A\mu^B =-1\}\subset \BC^2_{\lambda, \mu}. $$

Multiplying
 \eqref{r22} and \eqref{r23} by $\lambda^{A}\mu^B$ we see that on  $C$ the following relations are valid:
\begin{equation}\label{t1}
\tilde \Phi_w(\lambda,\mu) \bigm|_C=-\sum_{1}^k sgn(a_i)h_{|a_i|}(\lambda)\lambda^{2A_i-a_i}\mu^{2B_i},
\end{equation}
\begin{equation}\label{t2}
\tilde \Psi_w(\lambda,\mu)\bigm|_C=-\sum_{1}^k sgn(b_i)h_{|b_i|}(\mu)\lambda^{2A_i}
\mu^{\sum 2B_i+b_i}.\end{equation}

In particular,  on $C$

\begin{equation}\label{t11}
\tilde \Phi_w(1,\mu)\bigm|_C=-\sum_{1}^k a_i\mu^{2B_i},
\end{equation}
\begin{equation}\label{t21}
\tilde \Psi_w(\lambda,1)\bigm|_C=-\sum_{1}^k b_i\lambda^{2A_i}.
\end{equation}

\begin{Lemma}\label{noton} Assume that $A\ne 0$
and the word map  $w$ is not surjective. Then
$$\sum_{1}^k b_i\gamma^{2A_i}=0$$
 for every root $\gamma$ of equation
 $$q(z):=z^A+1=0.$$

If  $B\ne 0$  and  the word map  $w$ is not surjective, then
 $$\sum_{1}^k a_i\delta^{2B_i}=0$$ for every root $\delta$ of equation
 $$p(z):=z^B+1=0.$$ \end{Lemma}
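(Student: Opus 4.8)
The idea is to combine the surjectivity criterion encoded in \Propertref{properties} with the explicit trace computation provided by \lemref{Ane0}. From the matrix form \eqref{r21}, the element $w(x,y)$ with $x,y$ upper triangular has trace $\lambda^A\mu^B+\lambda^{-A}\mu^{-B}$, so on the curve $C=\{\lambda^A\mu^B=-1\}$ we have $\tr(w(x,y))=-2$. Thus the values taken by $w$ along $C$ are exactly the minus-unipotent and $-\mathrm{id}$ elements, and the off-diagonal entry is $\tilde F_w(c,d,\lambda,\mu)=c\tilde\Phi_w+d\tilde\Psi_w$ evaluated on $C$. The plan is to argue contrapositively: if $w$ fails to be surjective, then (since \Propertref{properties}~\textbf{a} always holds by \propref{semisimple}) some minus-unipotent element with $\tr=-2$, $x\ne-\mathrm{id}$, must be omitted from $w_G$, and I will show this forces the displayed vanishing of $\tilde\Phi_w$ or $\tilde\Psi_w$ along $C$.

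First I would specialize. Suppose $A\ne 0$. On $C$, set $\lambda=1$; then $\mu^B=-1$, so every such $\mu$ is admissible, and I consider the one-parameter family of matrices obtained by letting $c,d$ range freely. The off-diagonal entry becomes $c\,\tilde\Phi_w(1,\mu)+d\,\tilde\Psi_w(1,\mu)$. Using \eqref{t11}, $\tilde\Phi_w(1,\mu)|_C=-\sum_1^k a_i\mu^{2B_i}$. The key point is that when $A\ne 0$, picking $\lambda=1$ does not pin down the trace alone; I instead want to probe whether the full set of minus-unipotents is hit. More precisely, I would fix a root $\gamma$ of $z^A+1=0$, set $\lambda=\gamma$ and $\mu=1$ (so that $\lambda^A\mu^B=\gamma^A=-1$, placing us on $C$), and read off $\tilde\Psi_w(\lambda,1)|_C=-\sum_1^k b_i\lambda^{2A_i}$ from \eqref{t21}. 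At such a point $h_{|a_i|}(\gamma)$ and the relevant powers are determined, and the off-diagonal entry of $w$ is a specific affine function of $(c,d)$.

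The heart of the argument is the following dichotomy at a point of $C$: if at least one of $\tilde\Phi_w, \tilde\Psi_w$ is nonzero there, then as $(c,d)$ range over $\BC^2$ the entry $c\tilde\Phi_w+d\tilde\Psi_w$ takes every complex value, in particular every nonzero value, so every minus-unipotent matrix conjugate to $\left(\begin{smallmatrix}-1&*\\0&-1\end{smallmatrix}\right)$ with that trace is realized and \Propertref{properties}~\textbf{c} holds along that slice. Hence if $w$ is \emph{not} surjective, property~\textbf{c} must fail somewhere, which forces both $\tilde\Phi_w$ and $\tilde\Psi_w$ to vanish at the chosen point of $C$. Specializing to $\lambda=\gamma$, $\mu=1$ and reading \eqref{t21} gives $\sum_1^k b_i\gamma^{2A_i}=0$ for every root $\gamma$ of $z^A+1=0$; the symmetric argument with $B\ne 0$, $\lambda=1$, $\mu=\delta$ a root of $z^B+1=0$, using \eqref{t11}, gives $\sum_1^k a_i\delta^{2B_i}=0$.

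The main obstacle is justifying that failure of surjectivity really localizes to the vanishing of \emph{both} trace-map coefficients at a single honest point of $C$ over $\BC$, rather than merely identically in $\lambda,\mu$. I would address this by noting that non-surjectivity means some specific $z\in\SL(2,\BC)$ with $\tr z=-2$, $z\ne-\mathrm{id}$ is omitted; by \propref{semisimple} only trace-$\pm2$ elements can be omitted, and by conjugation-invariance of the image it suffices to track the upper-triangular representatives parametrized by $C$. One must check that every minus-unipotent conjugacy class meets this upper-triangular family, so that omission of even one class forces the coefficient functions to vanish at the corresponding point; this is where the explicit normal form $\left(\begin{smallmatrix}-1&0\\ \ne 0&-1\end{smallmatrix}\right)$ and the freedom in $(c,d)$ do the work. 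The remaining steps are the routine substitutions \eqref{t11} and \eqref{t21} already recorded before the lemma.
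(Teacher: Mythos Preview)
Your approach is essentially the one in the paper: work on the curve $C=\{\lambda^A\mu^B=-1\}$ using the explicit form \eqref{r21} from \lemref{Ane0}, observe that if at a point $(\gamma,1)\in C$ one of $\tilde\Phi_w,\tilde\Psi_w$ is nonzero then varying $(c,d)$ produces every minus-unipotent (and $-id$), and read off the vanishing from \eqref{t21} (resp.\ \eqref{t11}).

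There is one genuine gap in your reduction. You assert that ``non-surjectivity means some specific $z$ with $\tr z=-2$, $z\ne-\mathrm{id}$ is omitted,'' but you only justify $\tr z=\pm 2$ via \propref{semisimple}. You have not ruled out that the omitted element has trace $+2$, i.e.\ that property~\textbf{b} fails. The paper handles this first with the one-line observation that when $A\ne 0$ we have $w(x,\mathrm{id})=x^A$ (and when $B\ne 0$, $w(\mathrm{id},y)=y^B$), which already hits every unipotent. With that in hand, and noting that $-id$ itself is always hit on $C$ by taking $c=d=0$ (diagonal $x,y$), non-surjectivity indeed forces some minus-unipotent $\ne -id$ to be omitted, and your dichotomy argument then goes through exactly as the paper's does. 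Add that sentence and your plan matches the paper's proof.
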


 \begin{proof}

The matrices $z$ with $tr(z)=2 $ are in the image because $w(x,id)=x^A, $ $ w(id,y)=y^B.$
Assume now that for $K\ne 0$ the matrices

\begin{equation}\label{k3}\begin{pmatrix} -1 & K\\0 & -1\end{pmatrix}
\end{equation}
are not in the image. That implies that $\tilde \Phi_w(\lambda,\mu)\equiv 0$ and
$\tilde \Psi_w(\lambda,\mu)\equiv 0$
on the defined above curve   $$C=\{
\lambda^A\mu^B =-1\}\subset \BC^2_{\lambda, \mu}.$$

  If  $A\ne 0$  or    $B\ne 0,$  then, respectively, the pairs $(\gamma,1)$ and $(1,\delta)$  belong to the curve $C.$  We have to use only  \eqref{t11}, \eqref{t21}, respectively .\end{proof}

\begin{corollary}\label{B} Let $2B_i=k_iB+T_i,$ where $k_i$ are integers and  $0\le T_i<B\ne 0.$ If $w$ is not surjective, then for every $0\le T<B$

\begin{equation}\label{t111}
\sum_{i:T_i=T} a_i(-1)^{k_i}=0.
\end{equation}\end{corollary}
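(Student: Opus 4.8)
The plan is to feed the conclusion of \lemref{noton} into a root-counting (Vandermonde-type) argument; the real content of the corollary already lives in \lemref{noton}, and what remains is a clean extraction. Since $B \ne 0$ and $w$ is assumed not surjective, \lemref{noton} supplies the relation
$$\sum_{i=1}^k a_i \delta^{2B_i} = 0$$
for every root $\delta$ of $p(z) = z^B + 1$. Note that the hypothesis $0 \le T_i < B$ forces $B>0$, so $p$ really has $B$ roots. The idea is to rewrite each power $\delta^{2B_i}$ using the defining relation $\delta^B = -1$ together with the division $2B_i = k_i B + T_i$, and then to read off a polynomial identity in $\delta$ of degree strictly less than $B$.

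First I would substitute the division. Because $\delta^B = -1$, we have
$$\delta^{2B_i} = (\delta^B)^{k_i}\,\delta^{T_i} = (-1)^{k_i}\delta^{T_i},$$
so the relation from \lemref{noton} becomes
$$\sum_{i=1}^k a_i (-1)^{k_i} \delta^{T_i} = 0.$$
Grouping the summands according to the common residue $T_i \in \{0, 1, \dots, B-1\}$ and setting $c_T := \sum_{i: T_i = T} a_i (-1)^{k_i}$, this reads
$$\sum_{T=0}^{B-1} c_T \,\delta^{T} = 0,$$
an identity valid simultaneously for all $B$ roots $\delta$ of $z^B + 1$.

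The final step is to conclude $c_T = 0$ for each $T$. The polynomial $P(z) = \sum_{T=0}^{B-1} c_T z^T$ has degree at most $B-1$, yet the displayed identity shows it vanishes at every one of the $B$ roots of $z^B + 1$. In characteristic zero the polynomial $z^B + 1$ is separable, hence has $B$ pairwise distinct roots in $\BC$; a polynomial of degree $<B$ with $B$ distinct zeros is identically zero. Therefore every coefficient $c_T$ vanishes, which is exactly \eqref{t111}. I do not expect a genuine obstacle here: the only point needing a word of justification is that the $B$ roots of $z^B + 1$ are distinct, so that "more roots than the degree" legitimately forces $P \equiv 0$, and this is immediate over $\BC$.
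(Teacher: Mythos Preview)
Your proof is correct and follows essentially the same approach as the paper: both invoke \lemref{noton}, reduce $\delta^{2B_i}$ to $(-1)^{k_i}\delta^{T_i}$ via $\delta^{B}=-1$, collect terms into a polynomial of degree $<B$, and use that $z^{B}+1$ has $B$ distinct roots to force that polynomial to vanish identically. The only cosmetic difference is that the paper phrases the last step as ``$p(z)$ divides $p_1(z)$ and $\deg p_1<\deg p$'' rather than your root-counting formulation, but the content is the same.
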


  \begin{proof} Indeed in this case $$0=\sum_{1}^k a_i\delta^{2B_i}=\sum_{T=0}^{B-1}\delta^{T}(\sum_{i:T_i=T} a_i(-1)^{k_i})$$ for any root $\delta$ of equation
 $$p(z)=z^B+1=0.$$
Since   $p(z)$  has no multiple roots,  it implies  that  $p(z)$ divides the polynomial

 $$p_1(z):=\sum_{T=0}^{B-1}z^{T}(\sum_{i:T_i=T} a_i(-1)^{k_i}).$$

 But since degree of $p(z)$ is bigger than degree of $p_1(z)$
 that can be only if $p_1(z)\equiv 0.$
 \end{proof}

\begin{corollary}\label{positive}
(\corref{1.4})
If all $b_i$ are positive, then the word map  $w$ is either surjective
or the square of another word $v\ne id$.\end{corollary}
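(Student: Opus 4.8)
The plan is to leverage Corollary~\ref{B}, which states that if $w$ is \emph{not} surjective then for every residue $0\le T<B$ the signed sum $\sum_{i:T_i=T} a_i(-1)^{k_i}=0$, where $2B_i=k_iB+T_i$. My strategy is to show that these constraints, under the hypothesis that all $b_i>0$, force $w$ to be a perfect square. First I would set up the bookkeeping: since each $b_i>0$, the partial sums $B_i=\sum_{j<i}b_i$ form a \emph{strictly increasing} sequence $0=B_1<B_2<\dots<B_k<B_{k+1}=B$. Thus the doubled partial sums $2B_i$ are strictly increasing and lie in the range $[0,2B)$, so each residue class $T_i$ together with the integer $k_i\in\{0,1\}$ determines $2B_i$ uniquely. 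This means the map $i\mapsto 2B_i$ is injective, and the reduction $2B_i=k_iB+T_i$ simply records whether $2B_i$ lies in $[0,B)$ (so $k_i=0$) or in $[B,2B)$ (so $k_i=1$).

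Next I would exploit the pairing induced by the involution $2B_i\mapsto 2B_i+B \pmod{2B}$, or equivalently by relating an index with small partial sum to one with large partial sum sharing the same residue $T$. The vanishing condition $\sum_{i:T_i=T} a_i(-1)^{k_i}=0$ groups together indices whose doubled partial sums are congruent mod $B$; because the $2B_i$ are distinct and confined to $[0,2B)$, for a fixed residue $T$ there can be at most one index with $k_i=0$ and at most one with $k_i=1$. I would argue that when both occur the constraint reads $a_{i}-a_{i'}=0$, i.e.\ $a_i=a_{i'}$ for the paired indices, and when only one occurs the constraint forces $a_i=0$, contradicting $a_i\ne0$. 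Consequently every index is paired, $k$ is even, and the exponent data $(a_i,b_i)$ in the second half of $w$ mirror those in the first half.

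The structural upshot is that the matching $i\leftrightarrow i'$ with $B_{i'}=B_i+B/2$ exhibits a splitting of $w$ into two identical halves: writing $v=x^{a_1}y^{b_1}\cdots x^{a_{k/2}}y^{b_{k/2}}$, the relations $a_i=a_{i'}$ and the forced equalities among the $b$'s give $w=v^2$ as reduced words, so $w$ is literally the square of the shorter word $v$, and $v\ne\mathrm{id}$ since $k\ge 2$. I would need to verify that the partial-sum matching really does yield $b_i=b_{i'}$ as well (not merely $a_i=a_{i'}$); this follows because the increments $b_i=B_{i+1}-B_i$ are recovered from consecutive partial sums, and the periodicity $B_{i+1}=B_i+B/2$ forces periodicity of the increments with period $k/2$.

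\textbf{The main obstacle} I anticipate is precisely this last bridge: Corollary~\ref{B} is stated only in terms of the $a_i$ and the partial sums $B_i$, so it directly constrains only the combination $a_i=a_{i'}$ for paired indices, and one must carefully argue that the enforced congruences $B_{i'}=B_i+B/2$ propagate to an exact period-$k/2$ structure on \emph{both} sequences $(a_i)$ and $(b_i)$, rather than just a symmetry of the multiset of values. The delicate point is translating the arithmetic vanishing conditions (which are insensitive to the cyclic order) back into a statement about $w$ as an honest reduced word read left to right; I would handle this by observing that strict monotonicity of the $B_i$ removes all ambiguity, pinning down each $2B_i$ and hence the entire ordered list, so the period-$k/2$ symmetry of the partial sums is equivalent to $w$ being a genuine square.
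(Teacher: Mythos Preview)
Your proposal is correct and follows essentially the same route as the paper's proof: invoke Corollary~\ref{B}, use strict monotonicity of the $B_i$ (from $b_i>0$) to see that each residue class modulo $B$ contains at most two of the $2B_i$, one in $[0,B)$ and one in $[B,2B)$, then argue that a lone index would force $a_i=0$, so every index is paired, $k=2s$, and the shift $B_{s+j}=B_j+B/2$ together with $a_{s+j}=a_j$ yields $b_{s+j}=B_{s+j+1}-B_{s+j}=b_j$ and hence $w=v^2$. The paper carries this out by explicitly writing the chain of partial sums, while you phrase it as an order-preserving bijection between the two halves; these are the same argument.
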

 \begin{proof}
 In this case  $0\le 2B_i<2B$ and sequence $B_i$ is increasing.
If $w$ is not surjective, $p_1(z)\equiv 0$
 by \corref{B}.
 Thus for every
$B_i$ there is $B_j$ such that $2B_i=2B_j+B $ and $a_i-a_j=0.$

Thus, the sequence of $2B_i$ looks like:

$$0=2B_1, \  2b_1=2B_2,  \ 2(b_1+b_2)=2B_3, \dots,  2(b_1+\dots+b_s)=2B_{s+1}=B,$$
$$ 2(b_1+\dots+b_{s+1})=2B_{s+2}=B+2B_2=B+2b_1, $$
 $$2(b_1+\dots+b_{s+2})=2B_{s+3}=B+2B_3=B+2b_1+2b_2,
\dots,$$
$$2(b_1+\dots+b_{2s-1})=2B_{2s}=2B_s+B, $$   $$2(b_1+\dots+b_{2s})= 2B_{2s+1} =B+2B_{s+1}=2B.$$

It follows that  $k=2s$  and

 $$b_{s+1}=B_{s+2}-B_{s+1}=B_{2}-B_{1}=b_1;$$
$$b_{s+2}=B_{s+3}-B_{s+2}=B_{3}-B_{2}=b_2;$$
$$b_{2s-1}=B_{2s}-B_{2s-1}=B_{s}-B_{s-1}=b_{s-1};$$
$$b_k=b_{2s}=B_{2s+1}-B_{2s}=B_{s+1}-B_s=b_s.$$

Thus, $$ b_i=b_{i+s}, \ i=1,\dots, s, \ 2B_i=2B_{i+s}+B, a_i=a_{i+s}.$$

  Therefore the word is the square of $v=x^{a_1}y^{b_1}\dots x^{a_s}y^{b_s}.$
 \end{proof}

\begin{corollary}\label{negative}
If all $b_i$ are negative, then the word map of the word $w$ is either surjective
or the square of another word $v\ne id.$\end{corollary}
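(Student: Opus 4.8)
The plan is to reduce Corollary~\ref{negative} directly to the already-proven Corollary~\ref{positive} by a symmetry argument, rather than repeating the combinatorial bookkeeping of the positive case. The key observation is that inverting a word interacts predictably with surjectivity and with the ``square'' condition: if $w = x^{a_1}y^{b_1}\cdots x^{a_k}y^{b_k}$ with all $b_i < 0$, then its inverse
\begin{equation}\label{neginv}
w^{-1}=y^{-b_k}x^{-a_k}\cdots y^{-b_1}x^{-a_1}
\end{equation}
has all the exponents on the $y$-letters equal to $-b_i > 0$. First I would record that $w$ is surjective on $G=\SL(2,\BC)$ if and only if $w^{-1}$ is, since $w_G^{-1}=(w^{-1})_G$ and inversion is a bijection of $G$ onto itself fixing $\pm\,\mathrm{id}$ and preserving each trace value. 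Likewise, $w$ is a square of some $v\ne \mathrm{id}$ if and only if $w^{-1}=(v^{-1})^2$ is a square of $v^{-1}\ne\mathrm{id}$.

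The mild technical point is that \eqref{neginv} is written in the ``reversed'' normal form $y^{\ast}x^{\ast}\cdots y^{\ast}x^{\ast}$ rather than the standard form \eqref{word}, so strictly speaking Corollary~\ref{positive} does not apply to it verbatim. To fix this I would pass instead to the reversal-conjugate word obtained by cyclic rearrangement, or simply observe that all of Lemma~\ref{Ane0}, Lemma~\ref{noton}, and Corollary~\ref{B} go through unchanged for words that begin with a $y$-block: the functions $\tilde\Phi_w,\tilde\Psi_w$ are defined by the same recursion, the curve $C=\{\lambda^A\mu^B=-1\}$ is symmetric in the two coordinates, and the conclusion of Corollary~\ref{B}, namely that non-surjectivity forces $\sum_{i:T_i=T}a_i(-1)^{k_i}=0$ where now $2B_i=k_iB+T_i$ with the $b_i$ positive after inversion, is exactly the hypothesis feeding the telescoping computation of Corollary~\ref{positive}. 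Thus the chain of indices $2B_i$ again becomes an increasing sequence that pairs up as $2B_{i+s}=2B_i+B$, forcing $a_i=a_{i+s}$, $b_i=b_{i+s}$, and exhibiting $w^{-1}$ as $v_0^2$ for an explicit half-word $v_0$.

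Having obtained that $w^{-1}$ is either surjective or a nontrivial square, I would translate back: if $w^{-1}$ is surjective then so is $w$; and if $w^{-1}=v_0^2$ then $w=(v_0^{-1})^2$ is the square of $v=v_0^{-1}\ne\mathrm{id}$. This yields the dichotomy claimed for $w$. The one step deserving genuine care, and the place I expect the main (if modest) obstacle, is the bookkeeping of signs under inversion: with all $b_i<0$ one has $|b_i|=-b_i$, and I must verify that the quantities $2B_i=\sum_{j<i}2b_i$ transform correctly so that the ``$0\le 2B_i<2B$ and increasing'' hypothesis of Corollary~\ref{positive} really does hold for the inverted word. Concretely, after inversion the partial sums run in the opposite order and each $b_i$ is replaced by $-b_i>0$, so the monotonicity is genuinely recovered; once this is checked the telescoping argument of Corollary~\ref{positive} applies verbatim and the proof concludes.
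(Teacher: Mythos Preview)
Your reduction to Corollary~\ref{positive} via inversion of $w$ is correct, but it is more laborious than necessary. The paper achieves the same reduction in one line by a different symmetry: it substitutes $z=y^{-1}$, so that
\[
w(x,y)=x^{a_1}y^{b_1}\cdots x^{a_k}y^{b_k}=x^{a_1}z^{-b_1}\cdots x^{a_k}z^{-b_k}=:w'(x,z),
\]
and now $w'$ is already in the standard form \eqref{word} with all exponents $-b_i>0$ on the second letter. Because $(x,y)\mapsto(x,y^{-1})$ is a bijection of $G^2$, the word maps $w$ and $w'$ have the same image, so $w$ is surjective if and only if $w'$ is; and if $w'=v'^{\,2}$ then $w(x,y)=v'(x,y^{-1})^2$ exhibits $w$ as a square. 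This sidesteps entirely the issue you flag about $w^{-1}$ landing in the ``reversed'' normal form $y^{*}x^{*}\cdots y^{*}x^{*}$, and hence avoids the need to argue that Lemmas~\ref{Ane0}--\ref{noton} and Corollary~\ref{B} extend to that form or to pass through a cyclic conjugate. Your route works, but the substitution $y\mapsto y^{-1}$ is the cleaner symmetry here because it preserves the block structure rather than reversing it.
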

 \begin{proof}
We may change $y$ to $z=y^{-1}$ and apply \corref{positive} to the word $w(x,z).$
 \end{proof}

\begin{corollary}\label{A}
If all $a_i$ are positive, then the word map of the word $w$ is either surjective
or the square of another word $v\ne id.$\end{corollary}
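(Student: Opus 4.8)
The plan is to reduce Corollary~\ref{A} to the already-proven Corollary~\ref{positive} by exploiting the symmetry between the roles of the two variables $x$ and $y$ in the structure of the word. The key observation is that the surjectivity of a word map on $\SL(2,\BC)$ is unaffected by natural symmetry operations on the word that permute or invert the generators, since these correspond to substituting different (but equally arbitrary) matrices into the word map. In particular, if we can transform $w$ by such operations into a word whose \emph{second} exponents are all positive, then Corollary~\ref{positive} applies.

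First I would formalize the symmetry that swaps the two letters. Given $w(x,y)=x^{a_1}y^{b_1}\cdots x^{a_k}y^{b_k}$, consider the word $w^{\flat}(x,y):=w(y,x)=y^{a_1}x^{b_1}\cdots y^{a_k}x^{b_k}$, obtained by interchanging the two generators. For any matrices $X,Y\in\SL(2,\BC)$ we have $w^{\flat}(X,Y)=w(Y,X)$, so the images satisfy $w^{\flat}_G=w_G$; hence $w$ is surjective if and only if $w^{\flat}$ is. Moreover $w$ is a proper square $v^2$ if and only if $w^{\flat}=(v^{\flat})^2$ is. Under this swap the $a_i$ (the exponents of $x$ in $w$) become precisely the exponents of the \emph{second} letter in $w^{\flat}$. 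Thus the hypothesis that all $a_i$ are positive is exactly the hypothesis of Corollary~\ref{positive} applied to $w^{\flat}$.

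Then I would apply Corollary~\ref{positive} to $w^{\flat}$: since all the second-letter exponents of $w^{\flat}$ are positive, that word map is either surjective or the square of some $v^{\flat}\ne \mathrm{id}$. Translating back through the equivalence $w^{\flat}_G=w_G$ and $w=(v)^2\iff w^{\flat}=(v^{\flat})^2$, we conclude that $w$ is either surjective or the square of a word $v\ne\mathrm{id}$, as desired.

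The only genuine subtlety — and the step I would check most carefully — is that $w^{\flat}$ is a legitimate reduced word of the same shape \eqref{word}, with all exponents nonzero and the letters strictly alternating; this is immediate because swapping the symbols $x\leftrightarrow y$ preserves the alternating pattern $x^{\ast}y^{\ast}\cdots$ and the nonvanishing of every exponent. One small bookkeeping point is that the cyclic/boundary conventions in \eqref{word} put an $x$-block first and a $y$-block last, whereas $w^{\flat}$ begins with a $y$-block and ends with an $x$-block; but this is harmless, since conjugating $w^{\flat}$ by a power of one generator (which again does not change $w^{\flat}_G$ up to the surjectivity question) or simply re-reading the analysis of Section~\ref{section3} with the roles of $\lambda,\mu$ and $A,B$ interchanged reproduces exactly the argument of Corollary~\ref{positive}. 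In fact the cleanest route, which I would adopt, is to observe directly that Corollary~\ref{B} and its consequences are symmetric in the two variables, so the proof of Corollary~\ref{positive} goes through verbatim with $a_i,A_i,A$ in place of $b_i,B_i,B$, yielding that $w=v^2$ for $v=x^{a_1}y^{b_1}\cdots x^{a_s}y^{b_s}$ when $w$ fails to be surjective.
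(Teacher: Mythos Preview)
Your proposal is correct and rests on the same symmetry idea as the paper: reduce to Corollary~\ref{positive} by an automorphism of the free group that turns the $a_i$ into second-letter exponents. The specific maneuver differs slightly. The paper considers
\[
w'(z,v)\;=\;w(x,y)^{-1}\;=\;y^{-b_k}x^{-a_k}\cdots y^{-b_1}x^{-a_1}\;=\;z^{b_k}v^{a_k}\cdots z^{b_1}v^{a_1}
\]
with $z=y^{-1}$, $v=x^{-1}$. This word already has the shape \eqref{word} (first letter $z$, second letter $v$) with all second-letter exponents $a_k,\ldots,a_1$ positive, so Corollary~\ref{positive} applies directly; since $w'_G=(w_G)^{-1}$ and $w$ is a square iff $w^{-1}$ is, the conclusion transfers back to $w$. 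Your swap $w^{\flat}(x,y)=w(y,x)$ achieves the same reduction but lands on a word beginning with a $y$-block, which forces the extra conjugation (or the ``re-read Section~\ref{section3} with $\lambda\leftrightarrow\mu$'' step) you correctly identify. The paper's inversion trick is precisely what buys a word of the right shape without that bookkeeping; your alternative of invoking the $A\leftrightarrow B$ symmetry in Lemma~\ref{noton}/Corollary~\ref{B} is equally valid and arguably more transparent.
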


 \begin{proof} Consider $v=x^{-1},$ \ $z=y^{-1},$  a word
$$w'(z,v)=w(x,y)^{-1}=y^{-b_k}x^{-a_k}\dots y^{-b_1}x^{-a_1}=z^{b_k}v^{a_k}\dots
z^{b_1}v^{a_1},$$
 and apply \corref{positive} to the word $w'(z,v).$
 \end{proof}

\section {Trace criteria of almost surjectivity}\label{section4}

For every word map $w(x,y):G^2\to G$ defined are  the trace polynomials $P_w(s,t,u)=tr(w(x,y))$ and $Q_w=tr(w(x,y)y)$ in three variables $s=tr(x), \  t=tr(y),  and
\ u=tr(xy).$
(\cite {FK}, \cite {Go}, \cite {Vo}).

In other words,  the maps
 $$\varphi_w:G^2\to G^2, \ \varphi_w(x,y)=(w(x,y),y)$$ and
$$\psi_w:\BC^3_{s,t,u}\to \BC^3_{s,t,u}, \ \psi_w(s,t,u)=(P_w(s,t,u),t, Q_w(s,t,u))$$
may be included into the following commutative
 diagram:

\begin{equation}\label{diagram}
\begin{CD}
G \times G   @>{\vp}>>   G \times G   \\
@V\pi VV @V\pi VV \\
\BC^3_{s,t,u} @>{\psi}>> \BC^3_{s,t,u}
\end{CD}.
\end{equation}

Moreover, $\pi$ is a surjective map (\cite{Go}). For details, one can be referred to
(\cite {BGKJ},\cite {BGaK}) .

Since the coordinate $t$ is invariant under $\psi,$ for every fixed value $t=a\in\BC$
we may consider the restriction $\psi_a(s,u)=(P_w(s,a,u),Q_w(s,a,u))$ of morphism $\psi_w$
onto the plane $\{t=a\}=\BC^2_{s,u}.$

\begin{Definition}
We say that  $\psi_a(s,u)$ is  {\bf Big} if the image $\psi_a(\BC^2_{s,u})=\BC^2_{s,u}\setminus T_a,$  where $T_a$ is  a finite set. We say that  the trace map $\psi_w$ of a word  $w\in F$ is {\bf Big}  if
there is a  value $a$ such that $\psi_a(s,u)$ is  {\bf Big}.
\end{Definition}

\begin{Proposition}\label{tracecriteria}    If   the trace map $\psi_w$ of a word  $w\in F$
is   {\bf Big} then the word map $w:G^2\to G$  is almost surjective. \end{Proposition}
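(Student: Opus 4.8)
The plan is to connect the ``Big'' property of the trace map $\psi_w$ to \Propertref{properties} {\bf a}, {\bf b}, {\bf c}, exploiting the commutative diagram \eqref{diagram}. Recall that almost surjectivity means $w_G\supset \SL(2,\BC)\setminus\{-id\}$, i.e. the image contains every element $z$ with $tr(z)\ne -2$ together with every non-central element with $tr(z)=-2$. By \propref{semisimple} every element with $tr(z)\ne \pm 2$ is already in $w_G$, so the work is entirely in the fibers $t=2$ (unipotent targets) and $t=-2$ (minus-unipotent targets, excluding $-id$). The key structural tool is that $\pi$ is surjective, so an element $z\in G$ lies in $w_G$ as soon as its trace data $(P_w,Q_w)$-preimage can be realized; more precisely, if the trace triple $(tr(w(x,y)),tr(y),tr(w(x,y)y))$ associated to a chosen target point lies in the image of $\psi_a$, one can lift back through $\pi$ to produce an actual pair $(x,y)$ with $w(x,y)$ conjugate to the desired $z$.

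First I would fix the value $a$ for which $\psi_a(s,u)=(P_w(s,a,u),Q_w(s,a,u))$ is Big, so that its image is $\BC^2_{s,u}\setminus T_a$ with $T_a$ finite. The strategy is to show that the trace of $w(x,y)$ can be forced to equal $\pm 2$ while simultaneously controlling a second independent invariant that distinguishes $\pm id$ from genuine $(\mp)$-unipotent elements. Concretely, a non-central element $z\in\SL(2,\BC)$ is determined up to conjugacy by its trace alone \emph{except} at $tr(z)=\pm 2$, where one must separate the scalar matrices $\pm id$ from the unipotent/minus-unipotent classes. The third trace coordinate $Q_w=tr(w(x,y)y)$ furnishes exactly this extra invariant: for a target of the form $w(x,y)$ being (minus-)unipotent but not central, the pair $(P_w,Q_w)$ takes a value that, because $T_a$ is finite, can be avoided or hit as needed by varying the point in $\BC^2_{s,u}$. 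The main step is therefore to verify that the locus in $\BC^2_{s,u}$ corresponding to $P_w=\pm 2$ is a curve (not contained in $T_a$), and that along this curve $Q_w$ is non-constant, so that one can realize every admissible value of the distinguishing invariant and thereby every non-central element with $tr=\pm 2$.

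Having produced, via the Bigness hypothesis, trace triples $(P_w, a, Q_w)$ realizing all unipotent and all non-central minus-unipotent conjugacy classes, I would then lift along the surjection $\pi$ in \eqref{diagram}: the commutativity $\pi\circ\vp=\psi\circ\pi$ guarantees that a point $(x,y)$ over a chosen trace triple has $w(x,y)$ with the prescribed trace data, hence $w(x,y)$ conjugate to the target $z$. Conjugating $(x,y)$ by the appropriate $v\in G$ (exactly as in the proof of \propref{semisimple}) moves $w(x,y)$ onto $z$ itself. This handles \Propertref{properties} {\bf b} (all unipotents, $tr=2$) and {\bf c} (all minus-unipotents with $x\ne -id$, $tr=-2$); combined with \propref{semisimple} giving {\bf a}, we conclude $w_G\supset\SL(2,\BC)\setminus\{-id\}$, which is precisely almost surjectivity.

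The hard part will be controlling the behaviour at $tr=-2$: one must confirm that the finite exceptional set $T_a$ does not conspire to exclude the minus-unipotent classes, i.e. that the curve $\{P_w=-2\}\subset\BC^2_{s,u}$ genuinely meets the Big image $\BC^2_{s,u}\setminus T_a$ and that $Q_w$ varies over it. A subtle point is distinguishing $-id$ (which we are allowed to miss) from the non-central $tr=-2$ elements; since $-id$ corresponds to a single value of the distinguishing invariant $Q_w$, removing at most one more point is harmless given that only a finite set $T_a$ is excluded. I would also need to check carefully that the fibers of $\pi$ over trace triples with $tr=\pm 2$ indeed contain pairs $(x,y)$ yielding the \emph{non-scalar} Jordan type rather than degenerating to the diagonalizable (scalar) case — this is where the non-constancy of $Q_w$ along the relevant curve does the essential work.
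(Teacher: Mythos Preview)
Your approach is essentially the paper's: use Bigness at some $t=a$ to find trace triples with first coordinate $\pm 2$ and third coordinate avoiding the single value ($a$, resp.\ $-a$) that would force $w(x,y)=\pm id$, lift through the surjective $\pi$ in diagram~\eqref{diagram}, and then conjugate to get the whole (minus\nobreakdash-)unipotent class. The paper argues directly in the \emph{target} plane rather than on the source locus $\{P_w=\pm 2\}$: since $\psi_a$ misses only the finite set $T_a$, the target lines $\{s=2\}$ and $\{s=-2\}$ lie in the image except for finitely many points, so one can pick $(2,b)$ with $b\ne a$ and $(-2,d)$ with $d\ne -a$ immediately --- hence what you flag as ``the hard part'' (that $\{P_w=\pm 2\}$ is a curve on which $Q_w$ varies) requires no separate verification.
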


\begin{proof}  Let $a$ be such a value of $t$ that the map $\psi_a$ is  {\bf Big}. Let $S_a=T_a\cup \{(2,a)\}\cup \{(-2,-a)\}$.
Consider a line  $C_+=\{s=2\}$ and $C_- =\{s=-2\}\subset\BC^2_{s,u}.$  Let $B_+=C_+\setminus(C_+\cap S_a);$
 $B_-=C_-\setminus(C_-\cap S_a).$  Since $S_a$ is finite, $
B_+\ne\emptyset, B_-\ne\emptyset.$  Moreover, since these curves are outside $S_a, $ we have: $D_+=\psi^{-1}(B_+)\ne\emptyset, \ D_-=\psi^{-1}( B_-)\ne\emptyset.$

Take  $(s_0,u_0)\in D_+$ and  $(s_1,u_1)\in D_-.$ Then $\psi_w(s_0,a,u_0)=(2,a,b)$ with $a\ne b;$
and $\psi_w(s_1,a,u_1)=(-2,a,d)$ with $a\ne -d.$    Projection $\pi:  G^2\to \BC^3_{s,t,u}$ is surjective, thus
there is a pair $(x_0,y_0)\in G^2$ such that $tr(x_0)=s_0, \ tr(y_0)=a,\  tr(x_0y_0)=u_0. $  Then $\pi(w(x_0,y_0))=\psi_w(s_0,a,u_0)=(2,a,b).$
Hence, $tr(w(x_0,y_0))=2,$ but $w(x_0,y_0)\ne id,$   since    $ tr(w(x_0,y_0)y_0)=b\ne a=tr(y_0).$
Similarly, there is a pair  $(x_1,y_1)\in G^2$ such that $tr(x_1)=s_1, \ tr(y_1)=a,\  tr(x_1y_1)=u_1. $  Then $\pi(w(x_1,y_1))=\psi_w(s_1,a,u_1)=(-2,a,d).$
Hence, $tr(w(x_1,y_1))=-2,$ but $w(x_1,y_1)\ne -id,$   since    $ tr(w(x_1,y_1)y_1)=d\ne -a=-tr(y_1).$

It follows that all the elements $z\ne-id $ with trace $2$ and $-2$  are in the image of the word map $w.$ \end{proof}

\begin{Corollary}\label{sequence}Assume that the trace map $\psi_w$  of a word $w$ is {\bf Big}. Consider a sequence of words defined recurrently in the following way:
$$v_1(x,y)=w(x,y); \
v_{n+1}(x,y)=w(v_n(x,y),y);$$
Then the word map $v_n:G^2\to G$ is almost surjective for all $n\ge1.$\end{Corollary}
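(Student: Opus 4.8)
The plan is to reduce the iterated word $v_n$ to an $n$-fold self-composition at the level of trace maps, and then to verify that the property \textbf{Big} survives iteration.

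First I would record the recursion at the group level. For any word $v$ in two letters set $\varphi_v(x,y)=(v(x,y),y)$ as in the diagram \eqref{diagram}. From $v_{n+1}(x,y)=w(v_n(x,y),y)$ one reads off
\[
\varphi_{v_{n+1}}(x,y)=(w(v_n(x,y),y),\,y)=\varphi_w\bigl(v_n(x,y),y\bigr)=\varphi_w(\varphi_{v_n}(x,y)),
\]
so that $\varphi_{v_{n+1}}=\varphi_w\circ\varphi_{v_n}$ and, by induction, $\varphi_{v_n}=\varphi_w^{\,\circ n}$. I would then feed this into \eqref{diagram}. Its commutativity is $\pi\circ\varphi_w=\psi_w\circ\pi$; iterating gives $\pi\circ\varphi_w^{\,\circ n}=\psi_w^{\,\circ n}\circ\pi$, hence $\psi_{v_n}\circ\pi=\psi_w^{\,\circ n}\circ\pi$. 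Since $\pi$ is surjective (\cite{Go}), this forces the equality of trace maps $\psi_{v_n}=\psi_w^{\,\circ n}$: the trace map of $v_n$ is precisely the $n$-fold composition of the trace map of $w$.

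Next I would restrict to an invariant plane. Because the coordinate $t$ is preserved by $\psi_w$, fixing $t=a$ and writing $\phi:=\psi_{w,a}\colon\BC^2_{s,u}\to\BC^2_{s,u}$ gives $(\psi_{v_n})_a=\phi^{\,\circ n}$. Thus it suffices to prove the following stability statement: if $\phi$ has image $\BC^2\setminus T$ with $T$ finite (i.e. $\phi$ is \textbf{Big} for the chosen $a$), then every iterate $\phi^{\,\circ n}$ is \textbf{Big}; the same value $a$ will then witness that $\psi_{v_n}$ is \textbf{Big}.

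The heart of the argument—and the one step I expect to require care—is this stability lemma, which I would establish by induction on $n$. The key is the elementary set-theoretic inclusion $\phi(A\setminus B)\supseteq\phi(A)\setminus\phi(B)$ (a point of the right-hand side has some preimage, which cannot lie in $B$). Assuming $\mathrm{image}(\phi^{\,\circ n})=\BC^2\setminus T_n$ with $T_n$ finite, one computes
\[
\mathrm{image}(\phi^{\,\circ(n+1)})=\phi\bigl(\BC^2\setminus T_n\bigr)\supseteq\phi(\BC^2)\setminus\phi(T_n)=\BC^2\setminus\bigl(T\cup\phi(T_n)\bigr),
\]
and $T\cup\phi(T_n)$ is finite since both $T$ and $T_n$ are finite. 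Hence the complement of $\mathrm{image}(\phi^{\,\circ(n+1)})$ is finite, closing the induction and showing each $\phi^{\,\circ n}$ is \textbf{Big}. Once $\psi_{v_n}$ is known to be \textbf{Big}, I would apply \propref{tracecriteria} to the word map $v_n\colon G^2\to G$, which yields its almost surjectivity for every $n\ge1$.
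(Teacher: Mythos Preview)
Your proof is correct and follows essentially the same route as the paper: you show that $\psi_{v_n}$ is the $n$-fold iterate of $\psi_w$ (the paper simply cites this from \cite{BGKJ}, \cite{BGaK}, while you derive it from the diagram and surjectivity of $\pi$), and then prove by the same set-theoretic inclusion $\phi(A\setminus B)\supseteq\phi(A)\setminus\phi(B)$ that the \textbf{Big} property is stable under iteration at a fixed value $t=a$, before invoking \propref{tracecriteria}.
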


\begin{proof} The trace map $\psi_n=\psi_{v_n}$ of the word map $v_n$ is the $n^{th}$ iteration $\psi^{(n)}_1$ of the
 trace map $\psi_1=\psi_w$ (see \cite{BGKJ} or \cite {BGaK}).  Let us show by induction, that all the maps $\psi_{n}$ are {\bf Big}.
Indeed $\psi_1$ is {\bf Big} by assumption, hence ${(\psi_1)}_a (\BC^2_{s,u})= \BC^2_{s,u}-T_a$ for some value $a$
and some finite set $T_a.$
 Assume now that  $\psi_{n-1}$  is {\bf Big}. Let for a  value $a$ of $t$
the image ${(\psi_{n-1})}_a(\BC^2_{s,u}) =\BC^2_{s,u}\setminus N$ for some finite set $N.$
Hence $${(\psi_n)}_a(\BC^2_{s,u})={(\psi_1)}_a({(\psi_{n-1})}_a(\BC^2_{s,u}))={(\psi_1)}_a(\BC^2_{s,u}\setminus N)\supset$$
$$\supset{(\psi_1)}_a(\BC^2_{s,u})\setminus{(\psi_1)}_a(N)=\BC^2_{s,u}\setminus( T_a\cup{(\psi_1)}_a(N)).$$
Thus ${(\psi_n)}_a$ is {\bf Big}  as well for the same value $a.$

According to \propref{tracecriteria}, the word map $v_n$ is almost surjective. \end{proof}

\begin{Example} Consider the word $w(x,y)=[yxy^{-1},x^{-1}] $ and the corresponding sequence
$$v_n(x,y)=[yv_{n-1}y^{-1},v_{n-1}^ {-1}].$$ This is one of the sequences  that were
 used for characterization of finite solvable groups (see  \cite{BWW}, \cite{BGKJ}, \cite {BGaK}).

We have ( \cite{BGKJ}, section 5.1)
$$tr(w(x,y))=f_1(s,t,u)=(s^2+t^2+u^2-ust-4)(t^2+u^2-ust)+2;$$
$$tr(w(x,y)y)=f_2(s,t,u)=f_1t+(s(st-u)-t)(s^2+t^2+u^2-ust-4)-t;$$

We want to show that for a general value  $t=a$ the system of equations
\begin{equation}\label{big1}
f_1(s,a,u)=A\end{equation}
\begin{equation}\label{big2}
f_2(s,a,u)=B\end{equation}
has solutions for all pairs $(A,B)\in \BC^2\setminus T_a,$  where $T_a$ is a finite set.

Consider the system

\begin{equation}\label{big3}
h_1(s,u,a,C):=(s^2+a^2+u^2-usa-4)(a^2+u^2-usa)=A-2:=C,\end{equation}
\begin{equation}\label{big4}
h_2(s,u,a,D):= (s(sa-u)-a)(s^2+a^2+u^2-usa-4)=B- a(C+1):=D.\end{equation}

Note that the leading coefficient with respect $u$  in  $h_1$ is $1$, in $h_2$ is $s.$
The Magma computations show that the resultant (elimination of $u$ ) of  $h_1-C$ and $h_2-D$
is of the form
$$R(s,a,C,D)=s^4p_1(a,C,D)+s^2p_2(a,C,D)+p_3(a,C,D).$$

It has a non-zero root $s\ne 0$ at any point $(a,C,D),$
where  at least two  of three polynomials  $p_1,p_2,p_3$ do not vanish.
MAGMA computation show that
the ideals  $J1=<p_1,p_2>\subset \BQ[a,C,D],$ \  $J2=<p_1,p_3>\subset \BQ[a,C,D],$  \  $J3=<p_2,p_3>\subset \BQ[a,C,D]$   generated,  respectively,  by $p_1(a,C,D)$ and $p_2(a,C,D), $  by $p_1(a,C,D)$ and $p_3(a,C,D), $ by $p_2(a,C,D)$ and $p_3(a,C,D),$    are  one-dimensional.
It follows that for a general value of $a$ the set

$\{p_1(a,C,D)=p_2(a,C,D)=0\}$

 \hskip 3cm     $\cup\{p_1(a,C,D)=p_3(a,C,D)=0\}$

 \hskip 5cm   $\cup\{p_2(a,C,D)= p_3(a,C,D)=0\}$
\newline  is a finite
subset $N_a\subset \BC_{C,D}.$  On the other hand, at any point  $(C,D) $ outside $N_a$ polynomial
$R_a(s)=R(s,a,C,D)$ has a  non-zero root, and, therefore system \eqref{big3}, \eqref{big4} has a solution.
Thus, outside  the  finite set of points $T_a=\{(A=C+2, B=D+a(C+1)) \ |(C,D)\in N_a\}\subset \BC_{A,B},$
system \eqref{big1}, \eqref{big2} has a solution as well.
Thus, $\psi_w=(f_1,t,f_2)$ is {\bf Big} and all the word maps $v_n$ are almost surjective on $G.$

Let us cite the Magma computations  for $t=a=1, $  where $p=h_1-C$ and $q=h_2-D.$
$R$ is the resultant of $p,q$ with respect to $u.$

 \begin{verbatim}
 > r:=u^2+s^2+1-u*s;
>
> p:=(r-4)*(r-s^2)-C;
>
> q:=(r-4)*(s*(s-u)-1)-D;
>
> R:=Resultant(p,q,u);
> R;
-s^4*C^3 - 2*s^4*C^2*D + s^4*C^2 - 2*s^4*C*D^2 + s^4*C*D
 - s^4*D^3 + s^4*D^2 + 4*s^2*C^2*D - 4*s^2*C^2 + 8*s^2*C*D^2
 - 6*s^2*C*D + 6*s^2*D^3 - 8*s^2*D^2 +
    C^2 - 2*C*D^2 + 8*C*D + D^4 - 8*D^3 + 16*D^2
>

>
> p1:=-C^3 - 2*C^2*D + C^2 - 2*C*D^2 + C*D - D^3 + D^2;
> p2:= 4*C^2*D - 4*C^2 + 8*C*D^2 - 6*C*D + 6*D^3 - 8*D^2;
> p3:=C^2 - 2*C*D^2 + 8*C*D + D^4 - 8*D^3 + 16*D^2;
> Factorization(p1);
[
    <C + D - 1, 1>,
    <C^2 + C*D + D^2, 1>
]
> Factorization(p2);
[
    <C^2*D - C^2 + 2*C*D^2 - 3/2*C*D + 3/2*D^3 - 2*D^2, 1>
]
> Factorization(p3);
[
    <C - D^2 + 4*D, 2>

\end{verbatim}

Clearly   every pair among polynomials  $p1,p2,p3$  has only finite number of common zeros.
For example,
$p_1=p_3=0$ implies
$D^2-5D+1=0$ or $(D^2-4D)^2+(D^2-4D)D+D^2=0.$

 Computations show also that the word $w(x,y)$ takes on value $-id.$  For example,
one make take $$x=\begin{pmatrix}- 1&1\\-2 &1\end{pmatrix}, \  y=\begin{pmatrix} 1&t\\0&1\end{pmatrix},$$
where $t^2=-1/2.$
Here are computations:
\begin{verbatim}
> R<t>:=PolynomialRing(Q);
>  X:=Matrix(R,2,2,[-1,1,-2,1]);
>  Y:=Matrix(R,2,2,[ 1,t,0,1]);
>   X1:= Matrix(R,2,2,[1,-1,2,-1]);
>  Y1:=Matrix(R,2,2,[1,-t,0,1]);
>
>  Z:=Y*X*Y1;
>
>    p11:=Z[1,1];
>    p12:=Z[1,2];
>    p21:=Z[2,1];
>     p22:=Z[2,2];
>
>  Z1:=Matrix(R,2,2,[p22,-p12,-p21,p11]);
>
>  W:=Z*X1*Z1*X;
>
>    q11:=W[1,1];
>   q12:=W[1,2];
>  q21:=W[2,1];
>   q22:=W[2,2];
>
>
>    q11;
16*t^4 + 8*t^3 + 12*t^2 + 4*t + 1
>   q12;
-8*t^4 - 4*t^2
>  q21;
16*t^3 + 8*t
>   q22;
-8*t^3 + 4*t^2 - 4*t + 1
\end{verbatim}
Therefore, $t^2=-1/2$ implies that $q_{11}=q_{22}=-1, \ q_{12}=q_{21}=0.$

\end{Example}

\section {The word  $v(x,y)=[[x,[x,y]],[y,[x,y]]]$}

In this section we provide an example of a   word $v$ that is surjective
though it belongs to  $F^{(2)}.$
The interesting feature of this word is the following: if we consider it as a polynomial in the
Lie algebra $\mathfrak{sl}_2$, ($[x,y]$ being  the Lie bracket)
then it is not surjective (\cite{BGKP},
Example 4.9).

\vskip 0.5 cm

\begin{theorem}\label{sur-psl}  The word $v(x,y)=[[x,[x,y]],[y[x,y]]]$ is surjective on $\SL(2,\BC)$ (and, consequently, on $\PSL(2,\BC)$). \end{theorem}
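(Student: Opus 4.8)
The plan is to establish surjectivity of $v(x,y)=[[x,[x,y]],[y,[x,y]]]$ on $\SL(2,\BC)$ by verifying all four \Propertref{properties} of the image $v_G$. Surjectivity of the semisimple part (\Propertref{properties} \textbf{a}) is automatic by \propref{semisimple}, since $v\ne\mathrm{id}$ as a word. So the entire effort is to realize the three remaining trace-$\pm 2$ conjugacy classes: the non-identity unipotents with $\tr=2$, the non-identity minus-unipotents with $\tr=-2$, and the central element $-\mathrm{id}$.

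Since $v\in F^{(2)}$, the Magnus-embedding machinery of \secref{magnus}--\secref{wrd} no longer furnishes a nonzero linear term $\LA_v$, so \corref{uni} does not apply and I cannot get unipotents for free. Instead I would fall back on the explicit trace calculus of \secref{section4}. First I would compute the trace polynomials $P_v(s,t,u)=\tr(v(x,y))$ and $Q_v(s,t,u)=\tr(v(x,y)y)$ in the Fricke coordinates $s=\tr(x),\ t=\tr(y),\ u=\tr(xy)$, using the iterated commutator structure of $v$ (each inner commutator $[x,y]$, $[x,[x,y]]$, $[y,[x,y]]$ can be expressed via repeated application of the trace identities $\tr(AB)+\tr(AB^{-1})=\tr(A)\tr(B)$ and $\tr([A,B])=s^2+t^2+u^2-stu-2$). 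With $P_v,Q_v$ in hand, I would try to show the trace map $\psi_v=(P_v,t,Q_v)$ is \textbf{Big} in the sense of \secref{section4}: for a generic fixed value $t=a$, the planar map $(s,u)\mapsto(P_v(s,a,u),Q_v(s,a,u))$ is dominant with cofinite image. Concretely, as in the worked \exampref{engel}-style computation of the previous section, I would form the resultant in $u$ of $P_v-A$ and $Q_v-B$, check via a MAGMA computation that the relevant coefficient ideals are one-dimensional, and conclude \textbf{Big}-ness. By \propref{tracecriteria} this yields \Propertref{properties} \textbf{a},\textbf{b},\textbf{c}, i.e. almost surjectivity: $v_G\supset\SL(2,\BC)\setminus\{-\mathrm{id}\}$.

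The remaining and most delicate point is \Propertref{properties} \textbf{d}: showing $-\mathrm{id}\in v_G$. The \propref{-id} criterion is unavailable because it was derived for $w\in F^{(1)}\setminus F^{(2)}$ via the linear functional $\Phi_w$, which vanishes identically here. So I would instead exhibit an explicit pair $(x_0,y_0)\in\SL(2,\BC)^2$ with $v(x_0,y_0)=-\mathrm{id}$, in the same spirit as the explicit matrices given at the end of \secref{section4}. A natural strategy is to diagonalize: take $x_0=\mathrm{diag}(a,a^{-1})$ and a well-chosen $y_0$ (for instance an anti-diagonal or a unipotent times a torus element) depending on a parameter, compute $v(x_0,y_0)$ symbolically, and solve for the parameter so that all off-diagonal entries vanish and the diagonal equals $-1$. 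This is the step I expect to be the main obstacle, both because $v$ sits two levels deep in the derived series so the matrix entries are high-degree in the parameters, and because one must verify that the solution lies in $\SL(2,\BC)$ (not merely $\GL_2$) and really gives $-\mathrm{id}$ rather than a conjugate unipotent.

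Assembling the pieces: \propref{semisimple} gives \textbf{a}; \textbf{Big}-ness of $\psi_v$ together with \propref{tracecriteria} gives \textbf{b} and \textbf{c}; and the explicit witness gives \textbf{d}. Having all four properties, the word map $v$ is surjective on $G=\SL(2,\BC)$, and since surjectivity on $\SL$ descends to the quotient, it is surjective on $\tilde G=\PSL(2,\BC)$ as well, which proves \thmref{sur-psl}.
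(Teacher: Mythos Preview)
Your high-level strategy matches the paper's: get property \textbf{a} from \propref{semisimple}, then handle the three trace-$\pm 2$ classes separately. But your route to \textbf{b} and \textbf{c} differs substantially from the paper's, and is more elaborate than necessary.

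You propose to compute the full trace polynomials $P_v,Q_v$ and establish that $\psi_v$ is \textbf{Big}, then invoke \propref{tracecriteria}. For a word two levels down the derived series these polynomials have very large degree, and the resultant computation you describe would be heavy; moreover the paper never claims or verifies that $\psi_v$ is \textbf{Big}, so this step is genuinely unexecuted in your plan. The paper bypasses the trace-map machinery entirely here. It restricts to the four-parameter family $x=\begin{pmatrix}0&b\\c&d\end{pmatrix}$, $y=\begin{pmatrix}1&t\\0&1\end{pmatrix}$, computes $v(x,y)$ symbolically in MAGMA, and simply checks that the off-diagonal entry $q_{12}$ is not in the radical of the ideal cut out by $\det(x)=1$ and $\tr(v)=\pm 2$. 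That one non-vanishing test immediately gives a pair with $\tr(v(x,y))=2$, $v(x,y)\ne\mathrm{id}$ (and similarly for $-2$), which is all one needs since conjugacy then yields every non-central element of trace $\pm 2$. This is both conceptually and computationally lighter than proving \textbf{Big}-ness.

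For \textbf{d} you are on the same track as the paper --- an explicit witness --- but two remarks. First, your worry about staying inside $\SL(2,\BC)$ is unnecessary: since $v$ is itself a commutator, the images of $v$ on $\GL(2,\BC)$ and $\SL(2,\BC)$ coincide, and the paper explicitly exploits this freedom, taking $x,y\in\GL(2,\BC)$ with non-unit determinant. Second, rather than a diagonal $x_0$, the paper uses a concrete one-parameter family $x=\begin{pmatrix}1-d&1\\-2/3&d\end{pmatrix}$, $y=\mathrm{diag}(2-3d,3d-1)$ and factors the entries of $v(x,y)+\mathrm{id}$ over $\Q[d]$; the common factor $d^2-d+1/3$ produces $v(x,y)=-\mathrm{id}$ at its roots, while a separate irreducible factor of the trace gives minus-unipotents explicitly.
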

\begin{proof}
As it was shown in \propref{semisimple}, for every  $z\in  \SL(2,\BC)$
with $tr(z)\ne\pm2$ there are $x,y\in  \SL(2,\BC)^2$ such that $v(x,y)=z.$

Assume now that $a=\pm 2$. We have to show that $-id$ is in the image and that
 there are matrices
$x,y$ in $\SL(2,\BC), $  such that $$v(x,y):=\begin{pmatrix} q_{11} &q_{12}\\q_{21} & q_{22}\end{pmatrix}$$
has  the following
properties  :

\begin{itemize}
\item $q_{12}+q_{22}=\pm 2;$
\item $q_{12}\ne0.$
\end{itemize}

 We may look for these pairs among the matrices $x=\begin{pmatrix} 0 &b\\c & d\end{pmatrix}$
and $y=\begin{pmatrix} 1 &t
 \\0 & 1\end{pmatrix}.$

In the following MAGMA calculations
$C=[x,y],$ $D=[[x,y],x],$ $B=[[x,y],y],$ $A=[D,B].$

Ideal $I$ in the polynomial ring $\BQ[b,c,d,t]$ is defined by conditions $det(x)=1, tr(A)=2.$
Ideal $J$ in the polynomial ring $\BQ[b,c,d,t]$ is defined by conditions $det(x)=1, tr(A)=-2.$
Let  $T_+\subset \SL(2)^2 $ and  $T_-\subset \SL(2)^2$ be, respectively, the  corresponding affine
 subsets
 in affine variety $\SL(2)^2.$

The computations show that $q_{12}(b,c,d,t)$ does not vanish identically on $T_+$ or $T_-.$

 \begin{verbatim}

 >  Q:=Rationals();
> R<t,b,c,d>:=PolynomialRing(Q,4);
> X:=Matrix(R,2,2,[0,b,c,d]);
> Y:=Matrix(R,2,2,[ 1,t,0,1]);
>  X1:= Matrix(R,2,2,[d,-b,-c,0]);
> Y1:=Matrix(R,2,2,[1,-t,0,1]);
> C:=X*Y*X1*Y1;
>  p11:=C[1,1];
>  p12:=C[1,2];
>  p21:=C[2,1];
>  p22:=C[2,2];
> C1:=Matrix(R,2,2,[p22,-p12,-p21,p11]);
> D:=C*X*C1*X1;
>
>
> d11:=D[1,1];
> d12:=D[1,2];
> d21:=D[2,1];
> d22:=D[2,2];
> D1:=Matrix(R,2,2,[d22,-d12,-d21,d11]);
>
> B:=C*Y*C1*Y1;
>
>
> b11:=B[1,1];
> b12:=B[1,2];
> b21:=B[2,1];
> b22:=B[2,2];
> B1:=Matrix(R,2,2,[b22,-b12,-b21,b11]);
>
> A:=D*B*D1*B1;
>
> TA:=Trace(A);
>
> q12:=A[1,2];
> I:=ideal<R|b*c+1,TA-2>;
>
> IsInRadical(q12,I);
false
> J:=ideal<R|b*c+1,TA+2>;
>
> IsInRadical(q12,J);
false
>
\end{verbatim}

It follows that  the function $q_{12}(b,c,d,t)$
does not vanish identically on the   sets $T_+$ and $T_-$, hence,
 there are pairs with $tr(v(x,y))=2,v(x,y)\ne id,$
and $tr(v(x,y))=-2,v(x,y)\ne  - id.$

In order  to produce  the explicit solutions for $v(x,y)=-id$ and $v(x,y) =z, z\ne -id, tr(z)=-2, $
consider the following matrices depending on one parameter $d$:
$$x=\begin{pmatrix} 1-d&1\\-\frac{2}{3}&d\end{pmatrix},$$
$$y=\begin{pmatrix}2-3d &0\\0&3d-1\end{pmatrix}.$$

Since images of the commutator word on $\GL(2,\BC)$ and $\SL(2,\BC)$ are the same, we do not
require that $det(x)=1$ or $det(y)=1. $
 We only assume that $det(x)=d^2-d-2/3\ne 0$ and $det(y)=-9d^2+9d^2-2\ne 0.$

Let   $$A=v(x,y):=\begin{pmatrix} q_{11}(d) &q_{12}(d)\\q_{21}(d) & q_{22}(d)\end{pmatrix}$$
and $TA=tr(A)$.
Magma computations show that
$$ q_{11}(d)+1=N_{11}(d^2 - d + 1/3)H_{11}(d),$$
$$ q_{22}(d)+1=N_{22}(d^2 - d + 1/3)H_{22}(d),$$
$$q_{21}(d)=N_{21}(d-2/3)^2(d - 1/2)^3(d - 1/3)^2(d^2 - d - 2/3)(d^2 - d + 1/3)H_{21}(d),$$
$$q_{12}(d)=N_{21}(d-2/3)^2(d - 1/2)^3(d - 1/3)^2(d^2 - d - 2/3)(d^2 - d + 1/3)H_{12}(d),$$
$$TA+2=N(d^2 - d + 1/3)H(d),$$
where $N_{ij}$ and $N$ are non-zero rational numbers;  $H_{ij}$ and $H$ are  polynomials with rational coefficients that are
irreducible over $\BQ$ .
Moreover $deg H_{21}=deg H_{12}=25, \ deg H=38.$
It follows that  if $d^2 - d + 1/3=0 $  then $A=-id.$
If $d$ is  a root of $H$ that is  not a
root of $H_{21},$  then $A$ is a minus unipotent.

\end{proof}


\begin{thebibliography}{MMMM}



\bibitem{BG} T. Bandman, S. Garion,  {\em
 Surjectivity and equidistribution of the word $x^ay^b$ on $\PSL(2,q)$ and $\SL(2,q),$}
International Journal of Algebra and Computation (IJAC),{\bf 22}(2012), n.2, 1250017--1250050.







\bibitem {BGG} T. Bandman, S. Garion, F. Grunewald, {\em On the Surjectivity
of Engel Words on $\PSL(2,q)$}, Groups Geom. Dyn. {\bf 6} (2012), no. 3, 409--439.

\bibitem {BGaK} T. Bandman, S. Garion,B. Kunyavskii, {\em Equations in simple matrix groups: algebra, geometry, arithmetic, dynamics}, Cent. Eur. J. Math. {\bf 12} (2014), no. 2, 175--211.

\bibitem{BGKP}T. Bandman, N. Gordeev, B. Kunyavskii, E.  Plotkin, {\em Equations in simple Lie algebras}, J. Algebra {\bf 355} (2012), 67--79.

\bibitem{BGKJ}
T. Bandman, F. Grunewald, B. Kunyavskii, N. Jones, {\em Geometry and
arithmetic of verbal dynamical systems on simple groups}, Groups
Geom. Dyn. {\bf 4}, no.\ 4, (2010), 607--655.








\bibitem{Bo}
A. Borel, {\em On free subgroups of semisimple groups}, Enseign.
Math. (2), {\bf 29} (1983), no. 1-2, 151--164.

\bibitem{Borel}
A. Borel, Linear Algebraic Groups, 2nd edition. Springer-Verlag, New York, 1991.

\bibitem{Bourbaki} N. Bourbaki, General Topology, Chapters 1--4. Springer-Verlag, Berlin Heidelberg New York, 1989.
\bibitem
{BWW}
Bray J. N., Wilson J. S., Wilson R. A., {\em A characterization of finite soluble groups by laws in two
variables}, Bull. London Math. Soc., 2005, {\bf  37},179--186.




\bibitem{Ch1} P. Chatterjee,  {\em On the surjectivity of the power maps of algebraic groups in characteristic
zero}, Math. Res. Lett. {\bf9 } (2002) 741--756.
\bibitem{Ch2} P. Chatterjee, {\em On the surjectivity of the power maps of semisimple algebraic groups}, Math.
Res. Lett. {\bf 10} (2003) 625--633.

\bibitem{ET}  A. Elkasapy, A. Thom, {\em
About Got\^{o}'s method showing surjectivity of word maps}, Indiana Univ. Math. J., 63 (2014), no. 5, 1553–-1565.




\bibitem{Fr}
R. Fricke, {\em \"Uber die Theorie der automorphen Modulgruppen},
Nachr. Akad. Wiss. G\"ottingen (1896), 91--101.

\bibitem{FK}
R. Fricke, F. Klein, {\em Vorlesungen der automorphen Funktionen},
vol.~1--2, Teubner, Leipzig, 1897, 1912.






\bibitem{Go}
W. Goldman, {\em Trace coordinates on Fricke spaces of some simple hyperbolic surfaces}, Handbook of Teichmuller theory. Vol. II, 611--684, IRMA Lect. Math. Theor. Phys., 13, Eur. Math. Soc., Zurich, 2009;
{\em An exposition of results of Fricke and Vogt},
preprint available at
http://www.math.umd.edu/\~{}wmg/publications.html .

\bibitem{KBKP}    A. Kanel-Belov, B. Kunyavskii, E. Plotkin, {\em Word equations in simple groups and polynomial equations in simple algebras},
 Vestnik St. Petersburg Univ.: Mathematics {\bf 46 } (2013), no. 1, 3--13.




\bibitem{KKMP}  E.Klimenko,   B. Kunyavskii, J. Morita, E. Plotkin, {\em Word maps in Kac-Moudy settings},
preprint, ArXiv:0156.01422, (2015).

\bibitem{Ku} B. Kunyavskii, {\em  Complex and real geometry
of word equations in simple matrix groups and algebras},
 Preprint, 2014,  private communication.










\bibitem{MagnusCrelle} W. Magnus, {\em\"Uber den Beweis des Hauptideal Satzes}. J. reine angew. Math.  {\bf  170} (1934), 235--240.

\bibitem{McNinch}  G. J. McNinch,  {\em Optimal  $\SL(2)$-homomorphisms}. Comment. Math. Helv. {\bf 80} (2005),  391--426.



\bibitem{Se}
D. Segal, {\em Words: notes on verbal width in groups}, London
Mathematical Society Lecture Note Series {\bf 361}, Cambridge
University Press, Cambridge, 2009.



\bibitem{Ser} J.-P. Serre, {\em Trees}, Springer Monographs in Mathematics,
Springer-Verlag, Berlin, 2003.

\bibitem{TaYu} P. Tauvel, R.W.T.  Yu, Lie algebras and algebraic groups. Springer-Verlag, Berlin Heidelberg, 2005.


\bibitem{Vo}
H. Vogt, {\em Sur les invariants fundamentaux des equations
diff\'erentielles lin\'eaires du second ordre}, Ann. Sci. E.N.S,
3-i\`eme S\'er. {\bf 4} (1889), Suppl. S.3--S.70.


\bibitem{Wehrfritz} B.A.F. Wehrfritz, {\em A residual property of free metabelian groups}. Arch. Math. {\bf 20} (1969), 248--250.

\bibitem{Wilson} J.S. Wilson, {\em Free subgroups in groups with few relations}. L'Enseignement Math. (2) {\bf 56} (2010), 173--185.

\end{thebibliography}
\end{document}